\documentclass{amsart}

\usepackage{amsmath,amssymb,amsthm,mathtools}
\usepackage{mathrsfs}
\usepackage{amscd}
\usepackage{verbatim}
\usepackage{stmaryrd}
\usepackage{mathabx}

\usepackage[active]{srcltx}
\usepackage[shortlabels]{enumitem}

\mathtoolsset{centercolon}

\let\le\leqslant
\let\leq\leqslant
\let\ge\geqslant
\let\geq\geqslant

\theoremstyle{plain}
\newtheorem{theorem}{Theorem}[section]
\theoremstyle{remark}
\newtheorem{remark}[theorem]{Remark}

\theoremstyle{plain}
\newtheorem{corollary}[theorem]{Corollary}
\newtheorem{lemma}[theorem]{Lemma}
\newtheorem{proposition}[theorem]{Proposition}
\newtheorem{definition}[theorem]{Definition}

\numberwithin{equation}{section}

\newcommand{\C}{\mathbb C}
\newcommand{\R}{\mathbb R}
\newcommand{\N}{\mathbb N}
\newcommand{\Z}{\mathbb Z}

\newcommand{\E}{\mathbb E}
\newcommand{\1}{\mathbf 1}
\newcommand{\dd}{\,{\rm d}}

\newcommand{\Rad}{\operatorname{Rad}}
\newcommand{\norm}[1]{\|#1\|}

\newcommand{\wt}{\widetilde}
\newcommand{\Dom}{\mathsf{D}}

\newcommand{\mc}{\mathcal}
\newcommand{\ms}{\mathscr}

\DeclarePairedDelimiter\cbrace\{\}

\DeclarePairedDelimiter{\ip}\langle\rangle
\DeclarePairedDelimiter{\nrm}\lVert\rVert

\newcommand{\brb}[1]{\bigl(#1\bigr)}
\newcommand{\cbraceb}[1]{\bigl\{#1\bigr\}}

\newcommand{\bracb}[1]{\bigl[#1\bigr]}

\newcommand{\nrmB}[1]{\Bigl\|#1\Bigr\|}
\newcommand{\absB}[1]{\Bigl|#1\Bigr|}
\newcommand{\brB}[1]{\Bigl(#1\Bigr)}
\newcommand{\cbraceB}[1]{\Bigl\{#1\Bigr\}}

\DeclareMathOperator{\spn}{span}
\DeclareMathOperator{\loc}{loc}

\DeclareMathOperator{\supp}{supp}

\renewcommand{\P}{\mathbb{P}}

\allowdisplaybreaks

\DeclareFontFamily{U}{mathx}{\hyphenchar\font45}
\DeclareFontShape{U}{mathx}{m}{n}{<5> <6> <7> <8> <9> <10> <10.95> <12> <14.4> <17.28> <20.74> <24.88> mathx10}{}
\DeclareSymbolFont{mathx}{U}{mathx}{m}{n}
\DeclareFontSubstitution{U}{mathx}{m}{n}
\DeclareMathAccent{\widecheck}{0}{mathx}{"71}

\usepackage[colorlinks,linkcolor={blue},citecolor={blue},urlcolor={purple},]{hyperref}
\usepackage[colorinlistoftodos,prependcaption]{todonotes}

\title[Necessary conditions for maximal regularity]
{Necessary conditions for deterministic and stochastic maximal regularity}

\author{Emiel Lorist, Jan van Neerven,  Mark Veraar and Lutz Weis}

\address[Emiel Lorist, Jan van Neerven and  Mark Veraar]{\hfill\break\indent
Delft Institute of Applied Mathematics \hfill\break\indent
Delft University of Technology \hfill\break\indent
P.O. Box 5031 \hfill\break\indent
2600 GA Delft, The Netherlands}
\email{e.lorist@tudelft.nl}
\email{j.m.a.m.vanneerven@tudelft.nl}
\email{m.c.veraar@tudelft.nl}

\address[Lutz Weis]{\hfill\break\indent
Institute for Analysis \hfill\break\indent
Department of Mathematics \hfill\break\indent
Karlsruhe Institute of Technology (KIT) \hfill\break\indent
Englerstra{\ss}e 2 \hfill\break\indent
76131 Karlsruhe, Germany}
\email{lutz.weis@kit.edu}

\thanks{The first and third authors have received funding from the Veni subsidy \href{https://doi.org/10.61686/ZGRMR99948}{VI.Veni.242.057} and Vici subsidy \href{https://www.nwo.nl/en/projects/vic212027}{VI.C.212.027}, respectively, of the Netherlands Organisation for Scientific Research (NWO)}

\begin{document}

\begin{abstract}
We study the role of Banach space geometry in deterministic and stochastic maximal regularity. We first construct an example showing that the UMD assumption in the characterisation of maximal $L^p$-regularity in terms of $R$-sectoriality cannot be omitted. Combining this construction with an equivalence between stochastic maximal regularity and deterministic maximal regularity on the $2$-concavification of the underlying space, we obtain an operator on a UMD Banach function space of type $2$ that has a bounded $H^\infty$-calculus of angle zero, but fails stochastic maximal $L^p$-regularity (SMR$_p$) for every $p\in[2,\infty)$.

Motivated by this example, we study the Banach space geometry hypothesis underlying SMR$_p$ more closely. This is an $R$-boundedness condition $(S_p)$ for stochastic convolution operators. For UMD spaces $X$ of type $2$, we show that this condition is not only sufficient, but also necessary for two canonical operators: a diagonal multiplier on a Rademacher space and, for $q>2$, the Laplacian on $L^q(\mathbb R^d;X)$. Finally, we prove that its interval-kernel and exponential-kernel formulations are equivalent and that, at the endpoint $p=2$, condition $(S_2)$ holds if and only if $X$ is isomorphic to a Hilbert space.
\end{abstract}

\subjclass[2020]{Primary 47D06; Secondary 60H15, 47A60, 46B09, 46E30, 46B42.}

\keywords{Maximal regularity, stochastic maximal regularity, stochastic convolution, UMD Banach spaces, Banach function spaces, $H^\infty$-functional calculus, $R$-sectoriality, $R$-boundedness, analytic semigroups, Schauder multiplier operators}

\maketitle

\section{Introduction}

Maximal regularity lies at the intersection of evolution equations,
operator theory, and Banach space geometry. For the deterministic Cauchy
problem
\[
        u'(t)+Au(t)=f(t),\qquad u(0)=0,
\]
maximal $L^p$-regularity asserts that
\[
     f\in L^p(I;X) \quad\Longrightarrow\quad  u',Au\in L^p(I;X).
\]
For the stochastic problem
\[
        \dd U(t)+AU(t)\dd t=G(t)\dd W(t),\qquad U(0)=0,
\]
stochastic maximal $L^p$-regularity asserts, in its basic form, that
\[
        G\in L^p(\Omega\times I;X)
        \quad\Longrightarrow\quad
        A^{1/2}U\in L^p(\Omega\times I;X).
\]
Both properties are fundamental tools for treating nonlinear evolution
equations by perturbation and fixed-point methods; see
\cite{HNVW3,pruss2016moving} for the deterministic theory and
\cite{AVsurvey} for the stochastic theory.

In the deterministic setting, the role of the geometry of $X$ is described in \cite{Weis-MathAnn}: if $X$ is a UMD space and $p\in(1,\infty)$, then maximal $L^p$-regularity is equivalent to $R$-sectoriality of $A$ with angle strictly smaller than $\frac{\pi}{2}$. Classical counterexamples of Kalton--Lancien and Fackler
\cite{KaLa,KaLa02,Fack14,Fackl16} show that sectoriality is not sufficient for maximal regularity. In those examples, the obstruction can be traced to the failure of
$R$-sectoriality. This raises the question whether
$R$-sectoriality, or even a bounded $H^\infty$-calculus, might still
imply maximal regularity without the UMD assumption.

The stochastic theory has a related, but more subtle, geometric dependence. On $L^q$-spaces with $q\in[2,\infty)$, a bounded $H^\infty$-calculus of angle less than $\frac{\pi}{2}$ implies stochastic maximal $L^p$-regularity for $p\in(2,\infty)$; when
$q=2$, the endpoint $p=2$ is included. The proof developed in \cite{NVW-SMR,NVW-Rbounded,NVW-survey} extends to more general UMD Banach spaces of type $2$, provided a certain family of elementary stochastic convolution operators is $R$-bounded. It was left open whether this additional condition on $X$ merely reflects a limitation of the proof, or whether stochastic maximal regularity itself may fail on UMD spaces of type $2$, even for operators with a bounded $H^\infty$-calculus.

The purpose of this paper is to identify necessary geometric conditions in both the deterministic and stochastic settings. We will show that even very simple diagonal operators detect the relevant Banach space geometry sharply. Indeed, define  the mixed-norm sequence spaces
\[
        X_{q,r}:=\ell^q(\ell^r),\qquad q,r\in[1,\infty),
\]
and the positive diagonal operator
\begin{equation}\label{eq:defAintro}
     (Ax)_{m,n}=2^n x_{m,n}.
\end{equation}
On every $X_{q,r}$, this operator is $R$-sectorial and has a bounded $H^\infty$-calculus of angle $0$. Furthermore,  for $p,q,r\in[1,\infty)$, the operator $A$ has maximal $L^p$-regularity if and only if
\[
\left[
        p>1
        \quad\text{and}\quad
        (q>1\ \text{or}\ r=1)
\right]
\qquad\text{or}\qquad
        p=q=r=1;
\]
see Corollary \ref{cor:finaldet}. In particular, if $r>1$, then the operator on $\ell^1(\ell^r)$ fails maximal $L^p$-regularity for every $p\in[1,\infty]$, despite being positive, $R$-sectorial, and having a bounded $H^\infty$-calculus of angle $0$. Thus the UMD assumption in the characterisation of deterministic maximal regularity cannot simply be omitted, even for elementary diagonal multipliers on Banach function spaces.
A more general form of the construction shows that for the diagonal multiplier on $X(\ell^2)$, where $X$ is a Banach function space, we have 
\[
\begin{aligned}
 A\text{ is $R$-sectorial on }X(\ell^2)
     &\quad\Longleftrightarrow\quad
     X\text{ has finite cotype},\\
 A\text{ has maximal $L^p$-regularity on }X(\ell^2)
     &\quad\Longleftrightarrow\quad
     X\text{ is UMD};
\end{aligned}
\]
see Theorem \ref{thm:Xexample}. Choosing a non-UMD Banach function space $X$ for which both $X$ and $X^*$ have finite cotype thus produces an example in which both $A$ and $A^*$ are $R$-sectorial and have bounded $H^\infty$-calculi of angle $0$, but neither has maximal $L^p$-regularity; see Corollary \ref{cor:dual-stable-counterexample}.

The same diagonal multiplier also provides the bridge from deterministic
to stochastic maximal regularity. For $p,q,r\in[2,\infty)$, we prove
the equivalence
\[
\begin{gathered}
A\text{ has stochastic maximal $L^p$-regularity on }
\ell^q(\ell^r)
\\
\Longleftrightarrow
\\
A\text{ has deterministic maximal $L^{p/2}$-regularity on }
\ell^{q/2}(\ell^{r/2})
\end{gathered}
\]
see Proposition \ref{prop:MRSMR}. Combining this with the deterministic result gives a complete stochastic classification: $A$ has stochastic maximal $L^p$-regularity on $X_{q,r}$ if and only if
\[
\left[
        p>2
        \quad\text{and}\quad
        (q>2\ \text{or}\ r=2)
\right]
\qquad\text{or}\qquad
        p=q=r=2;
\]
see Theorem \ref{thm:mainSMR}. In particular, for every $r>2$, the space
\[
        X=\ell^2(\ell^r)
\]
is a UMD Banach function space of type $2$, and the operator in \eqref{eq:defAintro} has a bounded $H^\infty$-calculus of angle $0$, but fails stochastic maximal $L^p$-regularity for every $p\in[2,\infty)$; see Corollary \ref{cor:main}. The choice $r=4$ gives precisely the space $\ell^2(\ell^4)$ for which the method of \cite{NVW-Rbounded} was known to break down. Our result shows that this obstruction is not merely a limitation of that method: stochastic maximal regularity itself fails.

\medskip

Motivated by this example, we next investigate the geometric condition underlying the positive stochastic maximal regularity theory. Let $X$ be a UMD Banach space of type $2$. For $\delta>0$, let
\[
        h_\delta(t):=\delta^{-1/2}\mathbf 1_{(0,\delta)}(t),
\]
and for a strongly progressively measurable $G \in L^p(\Omega\times I;X)$ define the stochastic convolution operator 
\[
        (S_{h_\delta}G)(t)
        :=
        \int_0^t h_\delta(t-s)G(s)\dd W(s).
\]
Then $X$ is said to satisfy condition
$(S_p)$ if the family
\[
        \{S_{h_\delta}:\delta>0\}
\]
is $R$-bounded. We also consider the exponential kernels
\[
        k_\lambda(t):=\lambda^{1/2}e^{-\lambda t},
        \qquad \lambda>0,
\]
and the corresponding condition $(S_p^{\exp})$. The exponential formulation is the one naturally associated with analytic semigroups, whereas the interval formulation is better suited to geometric arguments.

Our first structural result is that the two formulations are equivalent:
\[
        (S_p)\quad\Longleftrightarrow\quad(S_p^{\exp});
\]
see Proposition \ref{prop:HpEp}. The difficult implication is from exponential kernels to interval kernels. Its proof uses the triangular contraction property, which UMD spaces always have, together with a bounded variation contraction principle. As a consequence, from the independence of $p\in(2,\infty)$ for $(S_p^{\exp})$, proven in \cite{LoVer}, we deduce that $(S_p)$ is independent of $p\in(2,\infty)$ as well; see Corollary \ref{cor:Sp-independent-p}. The endpoint behaves very differently:
\[
        X\text{ satisfies }(S_2)        \quad\Longleftrightarrow\quad        X\text{ is isomorphic to a Hilbert space};
\]
see Theorem \ref{thm:HScharacS2}. 

To conclude the paper, we show that condition $(S_p)$ is not merely a technical hypothesis for stochastic maximal regularity. More precisely, it is forced by two canonical operators. First, let $A$ be the diagonal multiplier
\[
        A(x_n)_{n\geq1}=(2^nx_n)_{n\geq1}
\]
on the Rademacher space $\Rad^p(X)$ for $p \in [2,\infty)$. This operator always has a bounded $H^\infty$-calculus of angle $0$, and in Theorem \ref{thm:SMRRad} we prove that
\begin{equation*}
\begin{split}
&A\text{ has stochastic maximal $L^p$-regularity}\quad
\Longleftrightarrow\quad
X\text{ satisfies }(S_p).
\end{split}
\end{equation*}
Combining this with a similar result for the Laplacian in Theorem \ref{thm:large-scale-plane-wave-extraction}, we obtain the following equivalences for $p,q\in(2,\infty)$:
\begin{enumerate}[label={\rm(\roman*)}, leftmargin=*]
\item $X$ satisfies condition $(S_p)$;
\item $A$ has stochastic maximal $L^p$-regularity on $\Rad^p(X)$;
\item $-\Delta$ has stochastic maximal $L^p$-regularity on
$L^q(\R^d;X)$.
\end{enumerate}

\medskip

The paper is organised as follows. After the preliminaries in Section \ref{sec:prelim}, Section \ref{sec:counter} studies the diagonal multiplier on mixed-norm sequence spaces. We first establish the complete deterministic classification, then pass to stochastic maximal regularity by $2$-concavification, and finally derive the general deterministic consequences on $X(\ell^2)$. In Section \ref{sec:Rbdd} we study conditions $(S_p)$ and $(S_p^{\exp})$, prove their equivalence and the Hilbert space characterisation at $p=2$. Moreover, we establish the characterisations in terms of the diagonal multiplier on $\Rad^p(X)$ and the Laplacian on $L^q(\R^d;X)$.

Notation and terminology are standard and can be found in \cite{HNVW1,HNVW2}. For an  overview of deterministic maximal $L^p$-regularity, we refer to \cite[Chapter 17]{HNVW3}. 

\section{Preliminaries}\label{sec:prelim}
Throughout the paper, all Banach spaces are over the complex scalar field. Rademacher random variables are always regarded as complex-valued, that is, as Steinhaus variables; cf. the conventions of \cite{HNVW2}. Moreover, $A$ denotes a densely defined sectorial operator on a Banach space $X$, and $(S(t))_{t\geq0}$ denotes the analytic $C_0$-semigroup generated by $-A$. Whenever fractional powers are used, $A$ is assumed to be injective. We refer to \cite[Chapter 13]{Nee} and \cite[Chapter 10]{HNVW2} for the relevant definitions and conventions.
For details on $R$-boundedness, $R$-sectoriality and the $H^\infty$-functional calculus, the reader is referred to \cite{HNVW2} as well. 

\subsection{Deterministic  maximal regularity} Let $I$ be either $(0,\infty)$ or a bounded interval $(0,T)$ and consider the following problem on  $I$:
\begin{equation}
\label{eq:EEMR}
\left\{
\begin{aligned}
u' + A u  &= f\,\\
u(0)&=0,
\end{aligned}
\right.
\end{equation}
where $f\in L^1_{\loc}(\overline{I};X)$ is given. The solution \eqref{eq:EEMR} can be written by the {\em mild solution formula}
\[u_f(t) := \int_0^t S(t-s) f(s) \dd s, \qquad t \in I.\]
For a given $p\in [1, \infty]$, the operator $A$ is said to have {\em maximal $L^p$-regularity on $I$} if
there exists a constant $C$ such that for all $f\in L^p(I;X)$, one has
$u_f(t)\in \Dom(A)$ for almost all $t\in I$ and
\[\|A u_f\|_{L^p(I;X)}\leq C \|f\|_{L^p(I;X)}.\]
Maximal $L^p$-regularity has various permanence properties with respect to $p$ and $I$. 

If $X$ is a UMD space, and $p\in(1,\infty)$, \cite{Weis-MathAnn} states that maximal
$L^p$-regularity on bounded intervals is equivalent to
$R$-sectoriality of $A$ with angle less than $\frac12\pi$. 
Under the additional assumption $0\in\varrho(A)$, the corresponding
characterisation holds on $\R_+$.
Maximal $L^p$-regularity provides an effective tool in the study of nonlinear evolution equations. The reader is referred to \cite[Chapters 17, 18]{HNVW3} and \cite{pruss2016moving} for further details.

\subsection{Stochastic maximal regularity}
Let $X$ be a UMD Banach space, and let $(\Omega,\mathcal A,(\mathcal F_t)_{t\geq0},\P)$ be a filtered probability space carrying a real $(\mathcal F_t)_{t\geq0}$-Brownian motion $W$. For an introduction to  stochastic integration in UMD spaces, the reader is referred to \cite{NVWco,NVW-UMD,NVW-survey}. The spaces $\gamma(I;X)$ are discussed in \cite[Chapter 9]{HNVW2}.

Let $p\in(1,\infty)$.
For every strongly progressively measurable process $\Phi$ representing an element of $L^p(\Omega;\gamma(I;X))$, the It\^o isomorphism gives
\begin{align}\label{eq:Ito}
\E\Big\|\int_I\Phi\dd W\Big\|^p
\eqsim_{p,X}
\E\|\Phi\|_{\gamma(I;X)}^p
\end{align}
whenever the right-hand side is finite. If, in addition, $X$ is a Banach function space, then
\begin{align}\label{eq:Ito2}
\E\Big\|\int_I\Phi\dd W\Big\|^p
\eqsim_{p,X}
\E\Big\|\Big(\int_I|\Phi(t)|^2\dd t\Big)^{1/2}\Big\|_X^p.
\end{align}
For the remainder of this subsection, we assume in addition that $X$ has type $2$. In this case every strongly progressively measurable process in $L^2(\Omega\times I;X)$ is stochastically integrable.

With these preparations we turn to the definition of stochastic maximal regularity. We will additionally assume $A$ is injective, which in certain cases can be avoided by shifting, i.e. considering $\lambda+A$. On $I$ consider the following problem
\begin{equation}
\label{eq:SEEMR}
\left\{
\begin{aligned}
\dd U + A U \dd t &= G \dd W,\\
U(0)&=0,
\end{aligned}
\right.
\end{equation}
where $G\in L^2_{\loc}(\Omega\times I;X)$ is strongly progressively measurable.
The solution \eqref{eq:SEEMR} can again be written by the {\em stochastic  mild solution formula}
\[U_G(t) := \int_0^t S(t-s) G(s) \dd W(s),\qquad t \in I.\]
For a given $p\in [2, \infty)$, the operator $A$ is said to have {\em stochastic maximal $L^p$-regularity on $I$} if there exists a constant $C$ such that for all strongly progressively measurable $G\in L^p(\Omega\times I;X)$, one has $U_G(t,\omega)\in \Dom(A^{1/2})$ for $(\P\otimes\dd t)$-almost all $(\omega,t)\in\Omega\times I$, and
\begin{align}\label{eq:SMRest}
\|A^{1/2} U_G\|_{L^p(\Omega\times I;X)}\leq C \|G\|_{L^p(\Omega\times I;X)}.
\end{align}

From the It\^o isomorphism \eqref{eq:Ito}, we can deduce the following
equivalent deterministic formulation of stochastic maximal regularity: The operator $A$ has stochastic maximal $L^p$-regularity on $I$ if and only if there is a constant $C$ such that for all $G\in L^p(I;X)$ one has
\begin{align}\label{eq:SMRgamma}
\Big(\int_I \|s\mapsto \1_{s\leq t}A^{1/2} S(t-s) G(s)\|_{\gamma(I;X)}^p \dd t\Big)^{1/p} \leq C\|G\|_{L^p(I;X)}.
\end{align}
Like deterministic maximal regularity, stochastic maximal $L^p$-regularity also has various permanence properties with respect to $p$ and $I$, see \cite{AV19, LoVer}. 

For $X = L^q$ with $q\in [2, \infty)$, a sufficient condition for stochastic maximal $L^p$-regularity is that $A$ has a bounded $H^\infty$-calculus  of angle $<\pi/2$ (see Theorem \ref{thm:survey} below), which also extends to more general Banach (function) spaces $X$, see \cite{NVW-SMR, NVW-Rbounded}. Currently, no general analogue of the operator-theoretic characterisation is known for stochastic maximal $L^p$-regularity. Stochastic maximal $L^p$-regularity provides an effective tool in the study of nonlinear stochastic evolution equations. For further details and references on stochastic maximal $L^p$-regularity, the reader is referred to \cite{AVsurvey}.

\section{A diagonal counterexample on mixed-norm sequence spaces}
\label{sec:counter}

In this section we construct and analyse the diagonal multiplier operator, which provides the main counterexample of the paper. We first determine its functional calculus properties and give a complete characterisation of its deterministic maximal regularity on the spaces
\[
        X_{q,r}=\ell^q(\ell^r), \qquad q,r \in [1,\infty).
\]
We then combine this characterisation with a concavification argument to obtain the corresponding stochastic maximal regularity result. In the final subsection, we record two further deterministic consequences of the same construction.

\subsection{The diagonal multiplier}
\label{ss:diagonal-multiplier}
Throughout this section we use a diagonal multiplier with exponentially
increasing eigenvalues. For this operator, the functional calculus and
sectoriality properties are immediate, while the mixed-norm structure of
$\ell^q(\ell^r)$ allows us to determine precisely when deterministic and
stochastic maximal regularity hold.

For $q,r\in [1, \infty)$ set
\[
        X_{q,r}:=\ell^q(\ell^r):=\Bigl\{(x_{m,n})_{m,n\ge1}:\norm{x}_{\ell^q(\ell^r)}:=\Bigl(\sum_{m=1}^\infty\Bigl(\sum_{n=1}^\infty|x_{m,n}|^r\Bigr)^{q/r} \Bigr)^{1/q}<\infty \Bigr\}.
\]
We let $e_{j,k}\in X_{q,r}$ denote the sequence such that $(e_{j,k})_{m,n} =1$ if $(m,n) = (j,k)$ and $(e_{j,k})_{m,n} =0$ otherwise. These vectors form a Schauder basis of $X_{q,r}$.

From \cite[Section 4.2]{HNVW1} and \cite[Section 7.1]{HNVW2} we note the following geometric properties of $X_{q,r}$. We refer to \cite{LN24} for the definition of a Banach function space.

\begin{lemma}\label{lem:geometry}
Let $q,r\in [1, \infty)$. The space $X_{q,r}$ is a Banach function space. If $q,r>1$, then $X_{q,r}$ is UMD. If $q,r \geq 2$, then $X_{q,r}$ has type $2$. Moreover, $X_{q,r}$ has cotype $\max\{q,r,2\}$. 
\end{lemma}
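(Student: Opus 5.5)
The lemma collects standard facts, and I would verify them one property at a time by reducing each to known results on iterated Lebesgue spaces $L^q(S;L^r(T))$ with counting measure. First, $X_{q,r}$ is a Banach function space. Its underlying measure space is $\N\times\N$ with counting measure. The norm is a lattice norm: if $|x|\le|y|$ pointwise, then $\|x\|\le\|y\|$. The space is also complete, which follows by iterating the completeness of $\ell^r$ and of $\ell^q$. Finally it contains all finitely supported sequences, and these form an order ideal of the required kind. This is routine.

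For the UMD property with $q,r>1$, I would use two facts. The scalar space $\ell^r$ is UMD for $r\in(1,\infty)$. And if $Y$ is UMD, then $L^q(S;Y)$ is UMD for $q\in(1,\infty)$, which is a Fubini-type argument \cite[Section 4.2]{HNVW1}. Applying the second with $Y=\ell^r$ gives the claim.

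For type and cotype I would use the stability results in \cite[Section 7.1]{HNVW2}. If $Y$ has type $t$ and cotype $c$, then $L^q(S;Y)$ has type $\min\{q,t\}$ and cotype $\max\{q,c\}$, for $q\in[1,\infty)$. The scalar space $\ell^r$ has type $\min\{r,2\}$ and cotype $\max\{r,2\}$. For $q,r\ge2$ this gives type $\min\{q,2\}=2$. It also gives cotype $\max\{q,r,2\}$ for all $q,r\ge1$.

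If one wants to see the cotype statement directly, the argument runs as follows. Write $c=\max\{q,r,2\}$. Because $X_{q,r}$ is a Banach lattice, Kahane--Khintchine together with Khintchine's inequality give the square function estimate
\[
\E\Bigl\|\sum_j \varepsilon_j x_j\Bigr\| \eqsim \Bigl\|\Bigl(\sum_j|x_j|^2\Bigr)^{1/2}\Bigr\|_{X_{q,r}}.
\]
The pointwise bound $(\sum_j|x_j|^c)^{1/c}\le(\sum_j|x_j|^2)^{1/2}$ holds since $c\ge2$. It then remains to show
\[
\Bigl(\sum_j\|x_j\|^c\Bigr)^{1/c}\le\Bigl\|\Bigl(\sum_j|x_j|^c\Bigr)^{1/c}\Bigr\|_{X_{q,r}}.
\]
This is Minkowski's inequality for the mixed $\ell^c(\ell^q(\ell^r))$ norms, valid since $c\ge q$ and $c\ge r$. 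It amounts to saying that $X_{q,r}$ is $c$-concave.

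The step needing the most care is precisely this one: the Minkowski interchange must be applied in the correct order at both levels of the mixed norm. I would handle it by first interchanging $\ell^c$ with $\ell^r$, which is allowed since $c\ge r$, and then interchanging $\ell^c$ with $\ell^q$, allowed since $c\ge q$. Everything else is cited.
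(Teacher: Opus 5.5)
Your proposal is correct and follows the paper's route. The paper gives no separate argument: it cites \cite[Section 4.2]{HNVW1} for the UMD property and \cite[Section 7.1]{HNVW2} (stability of type and cotype under $Y\mapsto L^q(S;Y)$) for type and cotype, exactly as you do. In your optional direct cotype argument, a general Banach lattice only gives the lower bound $\bigl\|\bigl(\sum_j|x_j|^2\bigr)^{1/2}\bigr\|\lesssim\E\bigl\|\sum_j\varepsilon_jx_j\bigr\|$, and the reverse inequality requires finite cotype, which is what you are proving; but the lower bound is precisely the direction your chain of inequalities uses, so the argument stands.
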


Let $A$ be the diagonal operator
\begin{align}\label{eq:Aemndef}
        (Ax)_{m,n}&:=2^n x_{m,n},\qquad
        \Dom(A):=\{x\in X_{q,r}:\ (2^n x_{m,n})_{m,n}\in X_{q,r}\}.
\end{align}
The semigroup generated by $-A$ is given by
\[
        (S(t)x)_{m,n}=e^{-2^nt}x_{m,n}.
\]

\begin{lemma}\label{lem:h-infty}
Let $q,r\in [1, \infty)$. The operator $A$ is $R$-sectorial on $X_{q,r}$ and has a bounded
$H^\infty$-calculus of angle $0$ on this space.
\end{lemma}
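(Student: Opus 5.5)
The plan is to prove the bounded $H^\infty$-calculus of angle $0$ directly, and then obtain $R$-sectoriality from a Schur-multiplier bound on Rademacher sums. The structure to exploit is that $A$ acts only on the inner index $n$ and is the same in every $m$.

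\textbf{Reduction to sequence multipliers.} For $f\in H^\infty(\Sigma_\sigma)$ with $\sigma\in(0,\pi)$, the functional calculus of the diagonal operator $A$ is the diagonal multiplier
\[
(f(A)x)_{m,n}=f(2^n)x_{m,n}.
\]
This holds for $f\in H^1\cap H^\infty$ via the Cauchy integral and extends by the convergence lemma. So it suffices to prove
\[
\nrmB{\sum_n f(2^n)x_n}\lesssim_\sigma \|f\|_\infty\nrmB{\sum_n x_n}
\]
for finitely supported sequences. Here the $n$-th block is $x_n=(x_{m,n})_m$, viewed as $e_n\otimes(\cdot)$.

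\textbf{Bounded variation of $(f(2^n))_n$.} The key step is the classical estimate that for $f\in H^\infty(\Sigma_\sigma)$ the sequence $(f(2^n))_{n\ge1}$ has bounded variation. The argument uses Cauchy's formula on the disc of radius $2^n\sin\sigma$ around $2^n$, which lies in the sector. It gives $|tf'(t)|\le C_\sigma\|f\|_\infty$ for $t>0$, hence
\[
|f(2^{n+1})-f(2^n)|\le\int_{2^n}^{2^{n+1}}|f'(t)|\dd t\le C_\sigma\log 2\,\|f\|_\infty.
\]
This alone only gives boundedness of each increment, not summability. Summability would hold only if $f(t)$ had a limit with a rate, so bounded variation in this naive sense fails, and I will use the lattice structure instead.

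\textbf{Lattice multiplier bound.} Since $X_{q,r}$ is a Banach function space (Lemma \ref{lem:geometry}) and $(f(A)x)_{m,n}=f(2^n)x_{m,n}$ is a pointwise multiplication by a bounded function, we get
\[
\|f(A)x\|_{X_{q,r}}\le\sup_n|f(2^n)|\,\|x\|_{X_{q,r}}\le\|f\|_\infty\|x\|_{X_{q,r}}
\]
by the lattice property $|f(2^n)x_{m,n}|\le\|f\|_\infty|x_{m,n}|$. This holds for every $\sigma>0$, giving angle $0$. This is the real proof; the bounded variation step above is unnecessary, and the only care needed is justifying the multiplier formula for $f(A)$. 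Sectoriality with angle $0$ follows in the same way from $\sup_n|\lambda(\lambda-2^n)^{-1}|\le C_\sigma$ for $\lambda\notin\Sigma_\sigma$.

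\textbf{$R$-sectoriality.} For $\lambda_1,\dots,\lambda_N\notin\Sigma_\sigma$, the operators $T_j=\lambda_j(\lambda_j-A)^{-1}$ are multiplications by functions $\phi_j$ with $|\phi_j|\le C_\sigma$. The remaining question is whether a uniformly bounded family of multiplication operators on a Banach function space is $R$-bounded. This holds when the space has finite cotype, via the square-function equivalence
\[
\E\nrmB{\sum_j\varepsilon_jT_jx_j}\eqsim\nrmB{\Bigl(\sum_j|\phi_jx_j|^2\Bigr)^{1/2}}\le C_\sigma\nrmB{\Bigl(\sum_j|x_j|^2\Bigr)^{1/2}}.
\]
By Lemma \ref{lem:geometry}, $X_{q,r}$ has cotype $\max\{q,r,2\}<\infty$ for all $q,r\in[1,\infty)$, so this applies and gives $R$-sectoriality. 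Without finite cotype the square-function equivalence can fail, which would be the main obstacle; here it is avoided because all the mixed-norm spaces have finite cotype. Alternatively, $R$-sectoriality follows from the bounded $H^\infty$-calculus plus Pisier's property $(\alpha)$, which $X_{q,r}$ has for $q,r<\infty$.
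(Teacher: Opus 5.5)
Your argument is correct and is essentially the paper's proof. The $H^\infty$-calculus is the pointwise multiplier $(f(A)x)_{m,n}=f(2^n)x_{m,n}$, with norm at most $\|f\|_\infty$ by the lattice property. $R$-sectoriality follows from the Khintchine--Maurey square-function equivalence (valid because $X_{q,r}$ has finite cotype) together with $\sup_{\lambda\notin\Sigma_\sigma}\sup_n|\lambda(\lambda-2^n)^{-1}|<\infty$. The bounded-variation paragraph is a dead end, as you yourself note, and should simply be dropped from the write-up.
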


\begin{proof}
For $\theta>0$, let
$\lambda_1,\ldots,\lambda_N\notin\overline{\Sigma_\theta}$ and
$x_1,\ldots,x_N\in X_{q,r}$. Since
\[
        \sup_{\lambda\notin\overline{\Sigma_\theta}}
        \sup_{n\geq1}
        \Big|\frac{\lambda}{\lambda-2^n}\Big|
        <\infty,
\]
the Khintchine--Maurey inequalities \cite[Theorem 7.2.13]{HNVW2} give
\[
\begin{aligned}
\E\Big\|\sum_{j=1}^N\varepsilon_j\lambda_jR(\lambda_j,A)x_j\Big\|_{X_{q,r}}^2&\eqsim_{q,r}\Big\|\Big(\sum_{j=1}^N|\lambda_jR(\lambda_j,A)x_j|^2\Big)^{1/2}\Big\|_{X_{q,r}}^2\\
&\lesssim_{q,r,\theta}\Big\|\Big(\sum_{j=1}^N|x_j|^2\Big)^{1/2}\Big\|_{X_{q,r}}^2\eqsim_{q,r}\E\Big\|\sum_{j=1}^N\varepsilon_jx_j\Big\|_{X_{q,r}}^2.
\end{aligned}
\]
Thus $A$ is $R$-sectorial of angle $0$. The bounded $H^\infty$-calculus is immediate: it is given by
$(\varphi(A)x)_{m,n} = \varphi(2^n)x_{m,n}$
and satisfies
\[
        \|\varphi(A)x\|_{X_{q,r}} \leq \|\varphi\|_{\infty}\|x\|_{X_{q,r}}.\qedhere
\]
\end{proof}

\subsection{Deterministic maximal regularity}
\label{ss:diagonal-deterministic}

We first determine exactly when $A$ on $X_{q,r}$ has maximal $L^p$-regularity. For $q>1$, the positive result follows by applying maximal regularity of the one-dimensional diagonal multiplier coordinatewise and using extrapolation. 

\begin{proposition}\label{prop:MRqr}
Let $p,q \in (1, \infty)$ and $r\in [1, \infty)$, or $q=r=1$ and $p\in [1, \infty)$. Let $I= (0,T)$ or $I = \R_+$. Then the operator $A$ on $X_{q,r}$ has maximal $L^p$-regularity on $I$.
\end{proposition}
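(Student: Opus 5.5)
We treat the two cases separately and in both reduce to the scalar-valued diagonal multiplier $(A_1 y)_n = 2^n y_n$ acting on $\ell^r$. For each fixed $m$, the $m$-th row $u_m:=(u_{m,n})_n$ of the solution $u_f$ depends only on the $m$-th row $f_m=(f_{m,n})_n$ of $f$:
\[
u_m(t)=\int_0^t S_1(t-s)f_m(s)\dd s,\qquad (S_1(t)y)_n=e^{-2^nt}y_n .
\]
The case $q=r=1$, $p\in[1,\infty)$, is handled by a direct kernel estimate. Here $\ell^1(\ell^1)=\ell^1(\N^2)$, and by Fubini
\[
\|Au_f\|_{L^p(I;X_{1,1})}\le \Bigl\|\sum_{m,n}\int_0^t 2^n e^{-2^n(t-s)}|f_{m,n}(s)|\dd s\Bigr\|_{L^p_t}.
\]
Writing $\phi(t,s)=\sum_{m,n}|f_{m,n}(s)|\,2^ne^{-2^n(t-s)}\1_{s<t}$, Minkowski's inequality in $L^p_t$ gives
\[
\|Au_f\|_{L^p(I;X_{1,1})}\le \sum_{m,n}\bigl\|2^n e^{-2^n\cdot}*|f_{m,n}|\bigr\|_{L^p}.
\]
Each term is bounded by $\|f_{m,n}\|_{L^p}$ by Young's inequality, since $\|2^ne^{-2^n\cdot}\|_{L^1}=1$. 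Then $\sum_{m,n}\|f_{m,n}\|_{L^p}\le \bigl\|\sum_{m,n}|f_{m,n}|\bigr\|_{L^p}=\|f\|_{L^p(I;X_{1,1})}$, by the reverse Minkowski inequality $\sum\|g_k\|_{L^p}\le\|\sum|g_k|\|_{L^p}$ for $p\ge1$. This settles the case without any UMD structure, and it works equally on $I=\R_+$ and on $(0,T)$.

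For $p,q\in(1,\infty)$ and $r\in[1,\infty)$, the key step is to show that $A_1$ has maximal $L^s$-regularity on $\ell^r$ for every $s\in(1,\infty)$, including $r=1$ where $\ell^1$ is not UMD, on $I=\R_+$. We use the same positivity trick. Pointwise,
\[
\|A_1u(t)\|_{\ell^r}\le \Bigl\|\Bigl(\int_0^t 2^ne^{-2^n(t-s)}|f_n(s)|\dd s\Bigr)_n\Bigr\|_{\ell^r},
\]
and each kernel $k_n(\tau)=2^ne^{-2^n\tau}\1_{\tau>0}$ is dominated by the one-sided Hardy--Littlewood maximal operator, uniformly in $n$. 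So the integral is bounded by $M|f_n|(t)$. By the Fefferman--Stein vector-valued maximal inequality on $L^s(\ell^r)$ with $s,r\in(1,\infty)$,
\[
\|A_1u\|_{L^s(\ell^r)}\lesssim \|(M f_n)_n\|_{L^s(\ell^r)}\lesssim\|f\|_{L^s(\ell^r)} .
\]
For $r=1$ the maximal inequality fails, so instead we use Minkowski's inequality and Young's inequality exactly as above, getting $\|\sum_n k_n*|f_n|\|_{L^s}\le\sum_n\|k_n*|f_n|\|_{L^s}$. This crude sum is not sufficient, however. The fix is to interchange the order of the norms: $\|\,\|\cdot\|_{\ell^1}\|_{L^s}\le \sum_n\|\cdot\|_{L^s}$ goes the wrong way. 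The cleaner route is to invoke the Weis theorem via extrapolation in $p$, as the paper indicates. Maximal $L^1$-regularity of $A_1$ on $\ell^1$ holds by the computation above, and by duality with the $L^\infty(\ell^\infty)$ estimate (where $\sup_n k_n*|g_n|\le\sup_n\|g_n\|_\infty$) we obtain the endpoint $p=\infty$ for the adjoint problem. Marcinkiewicz or Riesz--Thorin-type interpolation between the positive operators on $L^1(\ell^1)$ and $L^\infty(\ell^1)$ is not directly available, so I would instead rely on the fact that maximal $L^p$-regularity on a fixed space is independent of $p\in[1,\infty)$ once it holds for one $p$; this is the permanence property mentioned in the preliminaries. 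Hence $A_1$ has maximal $L^s$-regularity on $\ell^1$ for all $s$.

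Having scalar-row maximal $L^s$-regularity on $\ell^r$ uniformly in $m$ with constant $C_s$, we choose $s=q$ and apply Fubini:
\[
\|Au\|_{L^q(I;\ell^q(\ell^r))}^q=\sum_m\|A_1u_m\|_{L^q(\ell^r)}^q\le C_q^q\|f\|^q_{L^q(\ell^q(\ell^r))}.
\]
So $A$ has maximal $L^q$-regularity on $X_{q,r}$, and independence of $p$ (extrapolation, e.g.\ \cite[Chapter 17]{HNVW3}) transfers this to all $p\in(1,\infty)$. On $\R_+$ the constants are uniform, and bounded intervals follow by restriction. The main obstacle is the $r=1$ row estimate, which needs the $p$-independence of maximal regularity on non-UMD spaces; I expect to use the standard Calderón--Zygmund-type extrapolation there.
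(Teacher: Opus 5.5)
Your argument for the case $q=r=1$, $p\in(1,\infty)$ has a genuine gap. Its last step uses a ``reverse Minkowski inequality'' $\sum_k\|g_k\|_{L^p}\le\|\sum_k|g_k|\|_{L^p}$ for $p\ge 1$. For $p>1$ this is false: it is the triangle inequality read backwards, and it holds only for $p\le1$. Disjointly supported functions such as $g_k=\1_{[k,k+1)}$, $k=1,\dots,N$, give $N$ on the left and $N^{1/p}$ on the right. You run into exactly this obstruction a few lines later, in the row estimate for $r=1$, but you never go back and correct the first case. As written, your kernel computation proves only $p=1$, where Tonelli gives
\[
\|Au_f\|_{L^1(I;X_{1,1})}\le\sum_{m,n}\bigl\|k_n*|f_{m,n}|\bigr\|_{L^1}\le\sum_{m,n}\|f_{m,n}\|_{L^1}=\|f\|_{L^1(I;X_{1,1})}.
\]
The range $p\in(1,\infty)$ needs an extra ingredient. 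The paper supplies it by extrapolation: maximal $L^1$-regularity implies maximal $L^p$-regularity for all $p\in(1,\infty)$ on an arbitrary Banach space \cite[Theorem 17.2.31]{HNVW3}. This is the same step you end up using for the row operator on $\ell^1$.

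Alternatively, your adjoint idea works if you run it at $p'\in(1,\infty)$ instead of $p'=\infty$. For $0\le g\in L^{p'}(\R_+)$, the monotonicity of your kernels $k_n$ gives $\int_0^\infty k_n(t-s)g(t)\dd t\lesssim Mg(s)$, with $M$ the Hardy--Littlewood maximal operator. Hence
\[
\int_0^\infty\sum_{m,n}\bigl(k_n*|f_{m,n}|\bigr)(t)\,g(t)\dd t\lesssim\int_0^\infty\Bigl(\sum_{m,n}|f_{m,n}(s)|\Bigr)Mg(s)\dd s\lesssim_p\|f\|_{L^p(\R_+;X_{1,1})}\|g\|_{L^{p'}(\R_+)},
\]
which gives the $L^p$ bound directly. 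The same computation handles the rows on $\ell^1$.

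Two smaller corrections. First, maximal regularity is not ``independent of $p\in[1,\infty)$''. Extrapolation only passes from one $p\in[1,\infty]$ to all $p\in(1,\infty)$, and Corollary \ref{cor:finaldet} shows that for $q>1$ the operator $A$ has maximal $L^p$-regularity for $p>1$ but not for $p=1$. Second, Weis' characterisation is not available on the non-UMD space $\ell^1$; only the extrapolation theorem is needed there. Since you use extrapolation only upward, from $p=1$ or from $p=q$, these misstatements do not affect the rest of your argument.

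For $p,q\in(1,\infty)$ your proof is otherwise the paper's: reduce to rows, apply Fubini at $p=q$, and extrapolate in $p$. The one difference is the row estimate on $\ell^r$ for $r>1$. You dominate by the maximal function and apply Fefferman--Stein, which covers all $s\in(1,\infty)$ at once. The paper instead applies Fubini at $s=r$, using that each coordinate is an $L^1$-normalised scalar convolution, and then extrapolates. Both routes are valid; the paper's is more elementary, since extrapolation is needed anyway.
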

\begin{proof}
It suffices to consider $I = \R_+$ by \cite[Section 17.2.e]{HNVW3}. Define
\begin{align*}
        (A_rx)_{n}&:=2^n x_{n},\qquad \Dom(A_r):=\{x\in \ell^r:\ (2^n x_{n})_{n}\in \ell^r\},
\end{align*}
for which one can check that $A_r$ has maximal $L^p$-regularity on $\R_+$ for any $p\in (1, \infty)$ as in \cite[Example 17.4.7]{HNVW3}. If $r>1$, the case $p=r$ follows from Fubini's theorem, and extrapolation (see \cite[Theorem 17.2.31]{HNVW3}) gives maximal $L^p$-regularity for all
$p\in(1,\infty)$. Similarly, if $r=1$, maximal $L^1$-regularity  of $A_1$ follows directly from Fubini's theorem, whereas maximal $L^p$-regularity for $p\in(1,\infty)$ of $A_1$ follows by extrapolation again.

Next we consider the operator $A$ on $X_{q,r}$. For $p=q$  take $f \in L^q(\R_+;X_{q,r})$ and let $u_f$ be the mild solution to \eqref{eq:EEMR}. For fixed $m\geq 1$ we have $f_m:=(f_{m,n})_{n\geq 1} \in L^q(\R_+;\ell^r)$. Therefore, by Fubini's theorem and maximal $L^q$-regularity of $A_r$, we have
\[\|A u_f\|_{L^q(\R_+;X_{q,r})}^q = \sum_{m=1}^\infty \|A_r u_{f_m}\|_{L^q(\R_+;\ell^r)}^q \lesssim_{q,r} \sum_{m= 1}^\infty\|f_m\|_{L^q(\R_+;\ell^r)}^q = \|f\|_{L^q(\R_+;X_{q,r})}^q.\]
Hence, $A$ has maximal $L^q$-regularity on $\R_+$. By extrapolation, it also has maximal $L^p$-regularity on $\R_+$ for all $p\in (1, \infty)$. In the case $q=r=1$, the Fubini argument gives maximal
$L^1$-regularity. For $p\in(1,\infty)$, the result follows from extrapolation again.
\end{proof}

The case $q=1$ and $r>1$ behaves differently. Although $A$ on
$X_{1,r}$ is still $R$-sectorial and has a bounded
$H^\infty$-calculus of angle $0$, maximal $L^p$-regularity fails for
every $p\in[1,\infty]$.

\begin{proposition}\label{prop:MRq=1}
Let $p\in [1, \infty]$ and $r\in (1, \infty)$. Let $I= (0,T)$ or $I = \R_+$. Then the operator $A$ on $X_{1,r}$ does not have maximal $L^p$-regularity on $I$.
\end{proposition}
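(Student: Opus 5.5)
The plan is to disprove the estimate $\|Au_f\|_{L^p(I;X_{1,r})}\le C\|f\|_{L^p(I;X_{1,r})}$ with one explicit family of right-hand sides that works for every $p\in[1,\infty]$ at once, so no extrapolation is needed. Since $u_f$ on $(0,T)$ depends only on $f|_{(0,T)}$, maximal $L^p$-regularity on $\R_+$ implies it on $(0,1)$. On a bounded interval $(0,T)$ we may rescale time (replacing $A$ by $TA$, whose eigenvalues $T2^n$ behave the same in the estimates below). So it suffices to treat $I=(0,1)$.

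The mechanism is a scalar computation in $\ell^r$. Fix $N$ and $x=\sum_{n=1}^N e_n\in\ell^r$, so $\|x\|_{\ell^r}=N^{1/r}$. For $s\in(2^{-N},1)$,
\[
\|(2^ne^{-2^ns})_{n\le N}\|_{\ell^r}\eqsim_r s^{-1}.
\]
This holds because the terms with $2^n\approx s^{-1}$ dominate: the sum $\sum_n 2^{nr}e^{-r2^ns}$ is geometric on both sides of $2^n\approx 1/s$. Put $\delta=2^{-N}$. Then $A_r$ applied to the solution with datum $\delta^{-1}\1_{(0,\delta)}(t)x$ has $\ell^r$-norm $\gtrsim (t-\delta)^{-1}$ for $t\in(2\delta,1)$. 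The norm of this solution is logarithmically large in $L^1$-time, namely about $N$, while the datum has mass only $N^{1/r}$. This is the failure of maximal $L^1$-regularity for $A_r$ when $r>1$.

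To transfer this to every $p$, I use the outer $\ell^1$-sum to convert the time integral into a sum over $m$. For $m=1,\dots,M:=\delta^{-1}$, set $s_m=(m-1)\delta$ and define
\[
f(t)=\sum_{m=1}^M\delta^{-1}\1_{(s_m,s_m+\delta)}(t)\sum_{n=1}^Ne_{m,n}.
\]
At each time only one block is active, so $\|f(t)\|_{X_{1,r}}=\delta^{-1}N^{1/r}$ for all $t\in(0,1)$, and hence $\|f\|_{L^p(0,1;X_{1,r})}\le\delta^{-1}N^{1/r}$ for every $p\in[1,\infty]$. The rows do not interact, because $A$ acts diagonally in $(m,n)$. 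Row $m$ therefore carries the solution of the scalar problem above, shifted by $s_m$. Using positivity of all the quantities involved and the bound from the previous paragraph, for $t\in(\tfrac12,1)$ we get
\[
\|Au_f(t)\|_{X_{1,r}}=\sum_{m}\|A_ru_{f_m}(t)\|_{\ell^r}\gtrsim\sum_{m:\,s_m<t-2\delta}(t-s_m-\delta)^{-1}\gtrsim \delta^{-1}\sum_{k=1}^{1/(4\delta)}k^{-1}\eqsim\delta^{-1}N .
\]
Hence $\|Au_f\|_{L^p(1/2,1;X_{1,r})}\gtrsim\delta^{-1}N$, and the ratio with $\|f\|_{L^p}$ is at least $c\,N^{1-1/r}\to\infty$ because $r>1$. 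This contradicts maximal $L^p$-regularity for every $p\in[1,\infty]$.

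The main obstacle is the two-sided $\ell^r$-norm estimate for $(2^ne^{-2^ns})_n$, specifically the lower bound $s^{-1}$ uniformly for $s\in(2^{-N},1)$ with constants independent of $N$. It should follow by picking the single index $n$ with $2^n\in[s^{-1},2s^{-1})$, which has $n\le N$ precisely when $s>2^{-N}$. A second point is to write out carefully the solution for $t$ beyond the active window, $A_ru(t)=\bigl(e^{-2^n(t-s_m-\delta)}(1-e^{-2^n\delta})\delta^{-1}\bigr)_n$. For the relevant $n$ with $2^n\delta\lesssim1$, one has $(1-e^{-2^n\delta})\delta^{-1}\eqsim 2^n$, so this reduces to the same estimate. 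The remaining bookkeeping is routine.
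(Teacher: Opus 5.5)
Your proposal is correct and is essentially the paper's proof. Your $f$ is the paper's test function $f^N$ up to the harmless factor $\delta^{-1}$, and you bound $\|Au_f(t)\|_{X_{1,r}}$ from below exactly as the paper does: in each row $m$ you select the single index $n$ with $2^n\approx (t-s_m)^{-1}$, then sum the resulting harmonic series to get growth $N$ against $\|f\|\eqsim N^{1/r}$.

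The only difference is the reduction step. The paper reduces to $I=\R_+$ using $\|S(t)\|\le e^{-2t}$, while you reduce to $(0,1)$ by restriction and time rescaling. For $T<1$ your rescaling needs the range of $n$ shifted by about $\log_2(1/T)$, rather than the eigenvalues $T2^n$ behaving "the same", but this adjustment is routine.
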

\begin{proof} 
Since $\|S(t)\|\leq e^{-2t}$ by \cite[Theorem 17.2.24]{HNVW3}, it suffices to consider $I=\R_+$. For $f \in L^p(\R_+;X_{1,r})$ define
\[
        Tf(t):=\int_0^t A S(t-s)f(s)\dd s,
\]
or, written out in coordinates,
\[
        (Tf)_{m,n}(t)=\int_0^t 2^n e^{-2^n(t-s)} f_{m,n}(s)\dd s.
\]
Observe that $A$ has maximal $L^p$-regularity on $\R_+$ if and only if $T$ is bounded on $L^p(\R_+;X_{1,r})$.

Let $N\geq 2$ and set $$J^N_m:=((m-1)2^{-N},m 2^{-N}], \qquad m\geq 1.$$ Define $f^N=(f^N_{m,n})_{m,n\geq 1}$ by
\[
        f^N_{m,n}(s):=\1_{\{1,\ldots,2^N\}}(m)\1_{\{1,\ldots,N\}}(n)\1_{J^N_m}(s).
\]
For every $s\in(0,1]$ there is exactly one $m$ such that $s\in J^N_m$, so
\[
        \nrm{f^N(s)}_{X_{1,r}} = \Big(\sum_{n=1}^N 1^r\Big)^{1/r}    =    N^{1/r},        \qquad s\in(0,1].
\]
Therefore
\begin{equation}\label{eq:fNnorm}
    \norm{f^N}_{L^p(\R_+;X_{1,r})}=N^{1/r}.
\end{equation}

We next estimate the norm of $Tf^N$ from below. Fix
$j\in\{2^{N-1}+1,\ldots,2^N\}$ and $t\in J^N_j$,
and let $m\in\{j-2^{N-1},\ldots,j-1\}$. Let
\[
        n_m:=\Big\lfloor \log_2\Big(\frac{2^N}{j-m}\Big)\Big\rfloor .
\]
Then $1\le n_m\le N$ and $\frac12\leq (j-m) 2^{n_m}2^{-N} \le1$. Therefore, noting that $J_m^N\subseteq(0,t)$, we obtain
\[
\begin{aligned}
        (Tf^N)_{m,n_m}(t) &=\int_{J_m^N}2^{n_m} e^{-2^{n_m}(t-s)}\dd s        \\&=e^{-2^{n_m}(t-m2^{-N})}\bigl(1-e^{-2^{n_m}2^{-N}}\bigr)
        \\&\geq e^{-(j-m) 2^{n_m} 2^{-N}}\bigl(1-e^{-2^{n_m}2^{-N}}\bigr) \ge c  2^{n_m}2^{-N}
\end{aligned}
\]
for some absolute constant $c>0$. Thus, we have
\[
\begin{aligned}
        \Big(\sum_{n=1}^N |(Tf^N)_{m,n}(t)|^r\Big)^{1/r}\geq |(Tf^N)_{m,n_m}(t)|\geq c 2^{n_m}2^{-N}\geq\frac{c}{2(j-m)}.
\end{aligned}
\]
Therefore, we obtain
\[
\begin{aligned}
        \norm{Tf^N(t)}_{X_{1,r}}\gtrsim \sum_{m=j-2^{N-1}}^{j-1}\frac1{j-m}        &=\sum_{k=1}^{2^{N-1}}\frac1k \gtrsim N .
\end{aligned}
\]
Since $j\in\{2^{N-1}+1,\ldots,2^N\}$ was arbitrary, this estimate holds for all $$t \in \bigcup_{j=2^{N-1}+1}^{2^N} J_j^N=(\tfrac12,1],$$ and thus
       $ \norm{Tf^N}_{L^p(\R_+;X_{1,r})}
        \gtrsim_p N.$
Since $r>1$, combining this estimate with \eqref{eq:fNnorm}, we conclude that $T$ is not bounded on $L^p(\R_+;X_{1,r})$, and hence $A$ does not have maximal $L^p$-regularity on $\R_+$.
\end{proof}

Combining Propositions \ref{prop:MRqr} and \ref{prop:MRq=1}, we obtain the following full characterisation of deterministic maximal $L^p$-regularity for $A$ on $X_{q,r}$. 

\begin{corollary}\label{cor:finaldet}
Let $p,q,r\in[1,\infty)$, and let $I= (0,T)$ or $I = \R_+$. The operator $A$ on $X_{q,r}$ has maximal $L^p$-regularity on $I$ if and only if one of the following conditions holds:
\begin{enumerate}[label={\rm(\roman*)}, leftmargin=*]
\item\label{it:dmr1} $p>1$ and either $q>1$ or $r=1$;
\item\label{it:dmr2} $p=q=r=1$.
\end{enumerate}
\end{corollary}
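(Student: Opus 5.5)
The plan is to split the parameter range $p,q,r\in[1,\infty)$ into the cases covered by Propositions \ref{prop:MRqr} and \ref{prop:MRq=1}, plus one remaining case, the endpoint $p=1$ with $(q,r)\neq(1,1)$.

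\emph{Sufficiency.} If \ref{it:dmr1} holds with $q>1$, then $p,q\in(1,\infty)$ and $r\in[1,\infty)$, so Proposition \ref{prop:MRqr} applies directly. If \ref{it:dmr1} holds with $r=1$ and $q=1$, the second alternative of Proposition \ref{prop:MRqr} applies. Case \ref{it:dmr2} is the same alternative with $p=1$.

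\emph{Necessity.} Suppose neither \ref{it:dmr1} nor \ref{it:dmr2} holds. There are two possibilities.
\begin{enumerate}[label={\rm(\alph*)}, leftmargin=*]
\item $p>1$. Then the failure of \ref{it:dmr1} forces $q=1$ and $r>1$. This is covered by Proposition \ref{prop:MRq=1}, which in fact excludes every $p\in[1,\infty]$.
\item $p=1$ and $(q,r)\neq(1,1)$. If $q=1$ and $r>1$, Proposition \ref{prop:MRq=1} again applies. What remains is $q>1$, and this is the step needing a new argument.
\end{enumerate}

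For $p=1$ and $q>1$, I would use the classical fact that maximal $L^1$-regularity on a bounded interval with an unbounded generator forces $X$ to be non-reflexive. More precisely, by \cite[Chapter 17]{HNVW3}, if $A$ has maximal $L^1$-regularity then either $A$ is bounded or $X$ contains a complemented copy of $\ell^1$ (Guerre-Delabrière; Kalton--Portal). Here $A$ is unbounded, since its eigenvalues are $2^n$. On the other hand, for $q>1$ and $r>1$ the space $X_{q,r}$ is reflexive, so it cannot contain $\ell^1$.

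The remaining subcase $q>1$, $r=1$ is the main obstacle, because $X_{q,1}=\ell^q(\ell^1)$ does contain $\ell^1$ complementedly, so the reflexivity argument does not apply. Here I would instead argue directly, using the first coordinate $m$. Take $f_{m,n}(s)=a_m\1_{J_m}(s)$ for $n=1$ only, with disjoint intervals $J_m$. Restricted to a single $n$, the operator $T$ is scalar convolution with $2^ne^{-2^n t}$, and along $m$ it acts $\ell^q$-valued. Mimicking the harmonic-sum construction in Proposition \ref{prop:MRq=1}, but now with $\ell^1$ in time against $\ell^q$ in $m$, the goal is to show that $T$ is unbounded on $L^1(\R_+;\ell^q)$: one places the mass of $f$ at a single time for many $m$ and uses that the $L^1$-norm in time of an $\ell^q$-sum cannot control the fixed-$n$ convolution uniformly.

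If this direct construction becomes messy, the fallback is to restrict $A$ to the complemented, $A$-invariant subspace $\{x: x_{m,n}=0 \text{ for } n\neq n_0\}$. Each such subspace is a copy of $\ell^q$ on which $A$ acts as the scalar $2^{n_0}$. One would then combine these over $n_0$ into the copy $\ell^q(\ell^1)\supseteq\ell^q$ spanned by the vectors $e_{1,n}$ and $e_{m,1}$, reducing to the known failure of maximal $L^1$-regularity on reflexive spaces.
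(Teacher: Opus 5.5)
Your sufficiency argument and the necessity cases $p>1$ and $p=1$, $q=1$, $r>1$ are correct. For $p=1$, $q>1$, $r>1$, reflexivity of $X_{q,r}$ together with Guerre-Delabri\`ere's theorem does rule out maximal $L^1$-regularity of the unbounded operator $A$.

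\textbf{The gap.} The subcase $p=1$, $q>1$, $r=1$ is not closed. You correctly flag it as the obstacle, but neither of your two routes handles it.
\begin{itemize}
\item Your direct construction cannot work as described. If $f$ lives only at the index $n=1$, then $Tf$ is the convolution of $f$ with the fixed kernel $2e^{-2t}\1_{(0,\infty)}(t)$, whose $L^1$-norm is $1$. Young's inequality then gives $\|Tf\|_{L^1(\R_+;X_{q,1})}\le\|f\|_{L^1(\R_+;X_{q,1})}$. Unboundedness can only come from using unboundedly many frequencies $n$.
\item The fallback is not a reduction either. The closed span of $\{e_{1,n}\}_{n\ge1}$ is an isometric copy of $\ell^1$, on which $A$ is the multiplier $2^n$. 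That multiplier \emph{has} maximal $L^1$-regularity; this is the Fubini argument for $A_1$ in Proposition \ref{prop:MRqr}. On the closed span of $\{e_{m,1}\}_{m\ge1}\cong\ell^q$, $A$ is the bounded operator $2I$. Neither piece, nor their sum, is a reflexive subspace on which $A$ is unbounded, so no contradiction arises.
\end{itemize}

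\textbf{The missing idea.} Take one coordinate per row, with the frequency growing along the rows. That is, use the diagonal $Y=\overline{\operatorname{span}}\{e_{n,n}:n\ge1\}$.
\begin{itemize}
\item Each row of an element of $Y$ has at most one nonzero entry, so the inner $\ell^r$-norm plays no role. Hence $Y$ is isometric to $\ell^q$ for every $r$, including $r=1$.
\item $Y$ is invariant under $S(t)$ and is the range of a contractive coordinate projection commuting with $A$.
\item $A|_Y$ is the unbounded multiplier $e_n\mapsto2^ne_n$ on $\ell^q$.
\end{itemize}
Applying maximal $L^1$-regularity of $A$ to $Y$-valued $f$ therefore gives maximal $L^1$-regularity of this multiplier on the reflexive space $\ell^q$ with $q>1$. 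That is impossible, by Guerre-Delabri\`ere or \cite[Example 17.4.7]{HNVW3}.

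This is exactly the paper's argument. It treats all $q>1$ at once, so your separate reflexivity argument for $r>1$ is not needed.
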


\begin{proof}
The sufficiency of the two conditions follows from Proposition
\ref{prop:MRqr}. 
Conversely, suppose that $A$ has maximal $L^p$-regularity. First let $p>1$. If $q>1$, condition \ref{it:dmr1} holds. If $q=1$, Proposition \ref{prop:MRq=1} excludes $r>1$, and hence $r=1$; thus condition \ref{it:dmr1} again holds. 

It remains to consider $p=1$. If $q=1$, Proposition \ref{prop:MRq=1} implies that $r=1$, and condition \ref{it:dmr2} holds. Suppose therefore that $q>1$.
The coordinate projection $P:X_{q,r}\to X_{q,r}$ onto
\[
        Y:=\overline{\operatorname{span}}\{e_{n,n}:n\geq1\}
\]
is contractive and commutes with $A$ and with $S(t)$. Hence maximal $L^1$-regularity of $A$ on $X_{q,r}$ would imply maximal $L^1$-regularity of the restriction $A|_Y$. Under the isometric identification
\[
        Y\to \ell^q,\qquad \sum_{n\geq1}x_ne_{n,n}\mapsto(x_n)_{n\geq1},
\]
the restriction $A|_Y$ corresponds to the diagonal multiplier
$A_qe_n=2^ne_n.$
This contradicts \cite[Example 17.4.7]{HNVW3}.
\end{proof}

For $r\in(1,\infty)$, Lemma \ref{lem:h-infty} and Proposition \ref{prop:MRq=1} show that $A$ on $X_{1,r}$ is $R$-sectorial of angle $0$ and has a bounded $H^\infty$-calculus of angle $0$, but fails maximal $L^p$-regularity for every $p\in[1,\infty]$. Thus the implication from $R$-sectoriality of angle $<\frac{\pi}{2}$ to maximal $L^p$-regularity, which holds on UMD spaces by \cite{Weis-MathAnn}, cannot be extended to arbitrary Banach spaces.
This complements the counterexamples of Kalton--Lancien \cite{KaLa02, KaLa} and Fackler \cite{Fack14,Fackl16}. Their constructions produce generators of bounded analytic semigroups without maximal regularity, with the obstruction arising from the failure of $R$-sectoriality. In Fackler's example \cite{Fackl16}, the semigroup may in addition be chosen positive. By contrast, the diagonal operator
\[
        Ae_{m,n}=2^ne_{m,n}
\]
considered here is positive, $R$-sectorial, and has a bounded
$H^\infty$-calculus of angle $0$, but nevertheless fails maximal
regularity on $\ell^1(\ell^r)$ for $r>1$. This example on $X_{1,r}$ is not stable under duality: its adjoint is not $R$-sectorial. For instance, this follows from Theorem \ref{thm:Xexample} below since $X_{1,2}^*$ does not have finite cotype.  Further deterministic consequences
are discussed in Subsection \ref{ss:further-deterministic}.

\subsection{Stochastic maximal regularity via concavification}
\label{ss:diagonal-stochastic}
For positive multiplication operators on Banach function spaces, stochastic maximal regularity can be reformulated as deterministic maximal regularity on the $2$-concavification. This turns the stochastic problem on $\ell^q(\ell^r)$ into a deterministic problem on $\ell^{q/2}(\ell^{r/2})$. A related equivalence was used in \cite{NVW-SMR} to obtain a counterexample to stochastic maximal $L^2$-regularity on $L^q$ for $q\in(2,\infty)$.

For $q,r\in[2,\infty)$, the $2$-concavification of $X_{q,r}$ is given by
\[
        X_{q,r}^{(2)}:=\ell^{q/2}(\ell^{r/2})
        =X_{q/2,r/2},
\]
and for $x\in X_{q,r}^{(2)}$ we have
\[
        \|x\|_{X_{q,r}^{(2)}}=\big\||x|^{1/2}\big\|_{X_{q,r}}^2.
\]
The equivalence below extends more generally to positive multiplication
operators on $2$-convex UMD Banach function spaces.

\begin{proposition}\label{prop:MRSMR}
Let $p,q,r\in[2,\infty)$, and let $I= (0,T)$ or $I = \R_+$. The following assertions are equivalent:
\begin{enumerate}[label={\rm(\arabic*)}, leftmargin=*]
\item\label{it:MRSMR1}
the operator $A$ on $X_{q,r}$ has stochastic maximal
$L^p$-regularity on $I$;
\item\label{it:MRSMR2}
the operator $A$ on $X_{q/2,r/2}$ has maximal
$L^{p/2}$-regularity on $I$.
\end{enumerate}
\end{proposition}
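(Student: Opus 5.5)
We will reduce stochastic maximal regularity to a deterministic statement using the square-function form of the It\^o isomorphism \eqref{eq:Ito2}, which applies because $X_{q,r}$ is a UMD Banach function space of type $2$ for $q,r\ge2$ (Lemma \ref{lem:geometry}). Since $A$ acts diagonally, for deterministic $G\in L^p(I;X_{q,r})$ the formulation \eqref{eq:SMRgamma} combined with \eqref{eq:Ito2} gives
\[
\Big\|t\mapsto \Big(\int_0^t 2^n e^{-2^{n+1}(t-s)}|G_{m,n}(s)|^2\dd s\Big)^{1/2}\Big\|_{L^p(I;X_{q,r})}
\]
as an equivalent expression for $\|A^{1/2}U_G\|_{L^p(\Omega\times I;X_{q,r})}$. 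Setting $f:=|G|^2$, coordinatewise, the inner integral is $\tfrac12\bigl(\wt A\, u_f\bigr)_{m,n}(t)$, where $\wt A=2A$ and $u_f$ is the mild solution for $\wt A$ with right-hand side $f$. Taking square roots and using the definition of the concavified norm,
\[
\|A^{1/2}U_G\|_{L^p(\Omega\times I;X_{q,r})}^2\eqsim \|\wt A u_f\|_{L^{p/2}(I;X_{q/2,r/2})},\qquad \|G\|_{L^p(I;X_{q,r})}^2=\|f\|_{L^{p/2}(I;X_{q/2,r/2})}.
\]

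The key steps, in order, are as follows. First, justify that it suffices to test stochastic maximal regularity on deterministic $G$; this is exactly the content of \eqref{eq:SMRgamma}. Second, carry out the identity above coordinatewise, checking that the $\gamma(I;X)$-norm of $s\mapsto \1_{s\le t}A^{1/2}S(t-s)G(s)$ is equivalent to the square function in the lattice. Third, observe that every $f\ge0$ in $L^{p/2}(I;X_{q/2,r/2})$ arises as $|G|^2$ with $G=f^{1/2}$, so the two estimates are equivalent for nonnegative data. Finally, replace $2A$ by $A$, which is harmless: the substitution $t\mapsto 2t$ rescales $I$, and maximal regularity is invariant under this on $\R_+$ and passes between $(0,T)$ and $(0,2T)$ by the permanence results cited in the preliminaries.

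The main obstacle is in the direction \ref{it:MRSMR1}$\Rightarrow$\ref{it:MRSMR2}, where we must pass from nonnegative $f$ to general $f\in L^{p/2}(I;X_{q/2,r/2})$. Since the kernel $2^ne^{-2^n(t-s)}$ is positive, the coordinatewise bound $|(Au_f)_{m,n}|\le (Au_{|f|})_{m,n}$ holds, and the lattice norm is monotone, so positivity settles this. A secondary point is that when $p=2$ or $q/2=1$, the spaces $L^{p/2}$ and $X_{q/2,r/2}$ are $L^1$-type. Here we must check that \eqref{eq:Ito2} is used only on $X_{q,r}$ itself, which is UMD, so no UMD property of the concavification is needed. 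We also need to confirm that measurability and the domain condition $U_G(t)\in\Dom(A^{1/2})$ follow from finiteness of the square function.
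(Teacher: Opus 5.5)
Your proposal is correct and follows essentially the same route as the paper: the square-function form of the It\^o isomorphism turns $\|A^{1/2}U_G\|_{L^p}^2$ into $\|\tfrac12\wt A u_{|G|^2}\|_{L^{p/2}(I;X_{q/2,r/2})}$ with $\wt A=2A$, and taking $G=f^{1/2}$ for $f\ge0$ gives the converse direction, with $2A$ replaced by $A$ at the end. The differences are minor: you test on deterministic $G$ via \eqref{eq:SMRgamma} instead of applying the deterministic bound pathwise to adapted simple processes, and you pass to general $f$ through the domination $|(\wt A u_f)_{m,n}|\le (\wt A u_{|f|})_{m,n}$ instead of splitting $f$ into positive parts and using density.
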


\begin{proof}
Although the proof is a straightforward adaptation of \cite{NVW-SMR}, we indicate the main steps. Put $Y:=X_{q/2,r/2}$
and define
\[
        (\mc Tf)(t)    :=    \int_0^t AS(2(t-s))f(s)\dd s.
\]
For every strongly progressively measurable simple process $G$, the
square-function estimate \eqref{eq:Ito2} gives
\begin{align*}
            \|A^{1/2}U_G\|_{L^p(\Omega\times I;X_{q,r})}&\eqsim_{p,q,r} \brB{\int_I \E\Big\| \int_0^t |A^{1/2}S(t-s)G(s)|^2\dd s  \Big\|_Y^{p/2}\dd t}^{1/p}\\&=\|\mc T(|G|^2)\|_{L^{p/2}(\Omega\times I;Y)}^{1/2}.
\end{align*}

Suppose first that the operator $A$ on $Y$ has maximal $L^{p/2}$-regularity on $I$. Then so does $2A$, so $\mc T$ is bounded on $L^{p/2}(I;Y)$. Applying this estimate pathwise to $|G|^2$ proves stochastic maximal $L^p$-regularity of $A$ on $X_{q,r}$.

Conversely, suppose that $A$ on $X_{q,r}$ has stochastic maximal $L^p$-regularity. Let $f\geq0$ be a simple $Y$-valued function and set $G=f^{1/2}$. The preceding identities and stochastic maximal regularity imply
\[
\|\mc{T}f\|_{L^{p/2}(I;Y)}\lesssim\|G\|_{L^p(I;X_{q,r})}^2=\|f\|_{L^{p/2}(I;Y)}.
\]
The same estimate  for general  $f\in L^{p/2}(I;X_{\frac{q}{2},\frac{r}{2}})$ follows by
splitting $f$ into its (real and imaginary) positive and negative parts and using density.
By the definition of $\mc{T}$ this implies that $2A$, and thus $A$, has maximal $L^{p/2}$-regularity on $I$.
\end{proof}

Combining Proposition \ref{prop:MRSMR} with Corollary
\ref{cor:finaldet} yields a full classification of stochastic maximal $L^p$-regularity for $A$ on $X_{q,r}$. 

\begin{theorem}\label{thm:mainSMR}
Let $p,q,r\in[2,\infty)$, and let the operator $A$ on $X_{q,r}$ be defined by
\eqref{eq:Aemndef}. Let $I= (0,T)$ or $I = \R_+$. Then $A$ has stochastic maximal
$L^p$-regularity on $I$ if and only if one of the following conditions
holds:
\begin{enumerate}[label={\rm(\roman*)}, leftmargin=*]
\item $p>2$ and either $q>2$ or $r=2$;
\item $p=q=r=2$.
\end{enumerate}
\end{theorem}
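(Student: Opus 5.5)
The plan is to reduce Theorem \ref{thm:mainSMR} entirely to the deterministic classification through the concavification equivalence. By Proposition \ref{prop:MRSMR}, for $p,q,r\in[2,\infty)$ the operator $A$ on $X_{q,r}$ has stochastic maximal $L^p$-regularity on $I$ if and only if $A$ on $X_{q/2,r/2}$ has maximal $L^{p/2}$-regularity on $I$. One point needs checking first: the diagonal operator on $X_{q/2,r/2}$ must be exactly the operator $(Ax)_{m,n}=2^nx_{m,n}$ of \eqref{eq:Aemndef}, now acting on the concavified space. It is, since the multiplier and its domain are defined by the same formula on every $X_{q',r'}$. Also, the new exponents $p'=p/2$, $q'=q/2$, $r'=r/2$ all lie in $[1,\infty)$, so Corollary \ref{cor:finaldet} applies to them.

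Corollary \ref{cor:finaldet} says that maximal $L^{p'}$-regularity on $X_{q',r'}$ holds if and only if
\begin{enumerate}[label={\rm(\alph*)}, leftmargin=*]
\item $p'>1$ and either $q'>1$ or $r'=1$, or
\item $p'=q'=r'=1$.
\end{enumerate}
Substituting $p'=p/2$, $q'=q/2$, $r'=r/2$ turns (a) into ``$p>2$ and either $q>2$ or $r=2$'' and (b) into ``$p=q=r=2$''. These are exactly conditions (i) and (ii) of the theorem, which gives both directions at once.

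I do not expect a real obstacle, since the substance is in Proposition \ref{prop:MRSMR} and Corollary \ref{cor:finaldet}. The only care needed is at the endpoint $p'=1$ (that is, $p=2$). Corollary \ref{cor:finaldet} does cover $p'=1$, and Proposition \ref{prop:MRSMR} was stated for $p\in[2,\infty)$ with the $L^{p/2}=L^1$ case included. So the endpoint $p=q=r=2$ is handled consistently, and the failure at $p=2$ whenever $(q,r)\neq(2,2)$ follows as well.
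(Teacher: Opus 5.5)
Your proposal is correct and is essentially the paper's own proof. The paper also combines Proposition \ref{prop:MRSMR}, which reduces the problem to maximal $L^{p/2}$-regularity of the same diagonal operator on $X_{q/2,r/2}$, with Corollary \ref{cor:finaldet}, including the endpoint $p/2=1$; your translation $p'=p/2$, $q'=q/2$, $r'=r/2$ of its two conditions is exactly right.
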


\begin{proof}
By Proposition \ref{prop:MRSMR}, stochastic maximal $L^p$-regularity of $A$ on $X_{q,r}$ is equivalent to maximal $L^{p/2}$-regularity of $A$ on $X_{q/2,r/2}$. The result therefore
follows directly from Corollary \ref{cor:finaldet}.
\end{proof}

In particular, the critical choice $q=2$ and $r>2$ produces the following counterexample.

\begin{corollary}\label{cor:main}
There exist a UMD Banach function space $X$ of type $2$ and a sectorial operator $A$ on $X$ with a bounded $H^\infty$-calculus of angle $0$ such that $A$ fails stochastic maximal $L^p$-regularity for every $p\in[2,\infty)$.
\end{corollary}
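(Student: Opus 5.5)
The plan is to take $X=X_{2,r}=\ell^2(\ell^r)$ for a fixed $r\in(2,\infty)$, for instance $r=4$, with the diagonal operator $A$ from \eqref{eq:Aemndef}, and then collect what the earlier results say about this choice.

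First, the geometric requirements on $X$. By Lemma \ref{lem:geometry}, $X_{2,r}$ is a Banach function space. It is UMD because $2>1$ and $r>1$, and it has type $2$ because $q=2\ge 2$ and $r\ge 2$. Next, the operator-theoretic requirements. By Lemma \ref{lem:h-infty}, $A$ is sectorial (indeed $R$-sectorial) on $X_{2,r}$ and has a bounded $H^\infty$-calculus of angle $0$. Since the eigenvalues $2^n\ge 2$ are bounded away from zero, $A$ is injective and even invertible, so $A^{1/2}$ and the definition of stochastic maximal regularity make sense without any shifting.

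It remains to show that stochastic maximal $L^p$-regularity fails for every $p\in[2,\infty)$, on either $I=(0,T)$ or $I=\R_+$. Here we apply Theorem \ref{thm:mainSMR} with $q=2$ and $r>2$. Condition (i) requires $q>2$ or $r=2$, and neither holds. Condition (ii) requires $r=2$, which also fails. Hence $A$ does not have stochastic maximal $L^p$-regularity for any $p\in[2,\infty)$.

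Equivalently, Proposition \ref{prop:MRSMR} reduces the question to deterministic maximal $L^{p/2}$-regularity on $X_{1,r/2}$ with $r/2>1$, and Proposition \ref{prop:MRq=1} shows that this fails for all $p/2\in[1,\infty)$. There is no real obstacle here: all the substantive work was done in Proposition \ref{prop:MRq=1} and the concavification argument. The only points to check are that $p=2$ is covered, which it is since Proposition \ref{prop:MRq=1} includes the exponent $1$, and that the parameter requirements of Lemma \ref{lem:geometry} are met.
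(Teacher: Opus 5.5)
Your proposal is correct and matches the paper's proof: take $X=X_{2,r}$ with $r\in(2,\infty)$, use Lemma~\ref{lem:geometry} and Lemma~\ref{lem:h-infty} for the geometric and functional-calculus properties, and read off the failure of stochastic maximal $L^p$-regularity for all $p\in[2,\infty)$ from Theorem~\ref{thm:mainSMR}. Your unpacking via Proposition~\ref{prop:MRSMR} and Proposition~\ref{prop:MRq=1} is exactly the reasoning behind that theorem. It uses $X_{1,r/2}$ with $r/2>1$ and exponents $p/2\in[1,\infty)$, so the endpoint $p=2$ is included.
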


\begin{proof}
Fix $r\in(2,\infty)$, take $X=X_{2,r}$, and let $A$ be defined by \eqref{eq:Aemndef}. By Lemma \ref{lem:geometry}, $X$ is a UMD Banach function space of type $2$, and by Lemma \ref{lem:h-infty}, $A$ has a bounded $H^\infty$-calculus of angle $0$. The failure of stochastic maximal $L^p$-regularity for every $p\in[2,\infty)$ follows from Theorem \ref{thm:mainSMR}.
\end{proof}

The mixed-norm spaces occurring in this counterexample are  natural from the point of view of applications. Indeed, Besov spaces $B^s_{r,q}$ admit sequence-space representations modelled on $\ell^q(\ell^r)$, and such spaces arise naturally in applications of deterministic and stochastic maximal regularity to partial differential equations. In these applications the endpoint $q=1$ can be important; see \cite{agresti2023primitive,DHMPT,OgSh}.

\subsection{An extension of the deterministic construction}
\label{ss:further-deterministic}

We conclude the section with  an extension of our construction which yields examples in which both the operator and its adjoint are $R$-sectorial. This result isolates the two geometric
properties governing the construction: finite cotype is equivalent to
$R$-sectoriality, whereas the UMD property is equivalent to maximal
regularity.

Let $X$ be an order-continuous Banach function space and define
\[
        X(\ell^2):=\cbraceB{(x_n)_{n\geq1}\subseteq X:\nrmB{\brB{\sum_{n=1}^\infty |x_n|^2}^{1/2}}_X<\infty}.
\]
Then $X(\ell^2)$ has order-continuous norm. In particular, the
finitely nonzero sequences are dense in $X(\ell^2)$. Moreover $X(\ell^2)$ has finite cotype if and only if $X$ has finite cotype, and $X(\ell^2)$ is UMD if and only if $X$ is UMD.

Define the diagonal operator $A$ on $X(\ell^2)$ by
\begin{align}\label{eq:def-A}
        (Ax)_n:=2^nx_n,
        \qquad
        \Dom(A):=
        \left\{
        (x_n)_{n\geq1}\in X(\ell^2):
        (2^nx_n)_{n\geq1}\in X(\ell^2)
        \right\}.
\end{align}
The operator $A$ in
\eqref{eq:def-A} is densely defined, and $-A$ generates the analytic
$C_0$-semigroup given by
\[
        (S(t)x)_n=e^{-2^nt}x_n.
\]

\begin{theorem}\label{thm:Xexample}
Let $X$ be an order-continuous Banach function space, let $p\in(1,\infty)$, and let $I= (0,T)$ or $I = \R_+$. The operator $A$ defined by \eqref{eq:def-A} is sectorial on $X(\ell^2)$ and has a bounded $H^\infty$-calculus of angle $0$ on this space. Furthermore, the following assertions hold:
\begin{enumerate}[label={\rm(\arabic*)}, leftmargin=*]
  \item\label{it:1Xexample} $A$ is $R$-sectorial of angle $0$ if and only if $X$ has finite cotype;
  \item\label{it:2Xexample} $A$ has maximal $L^p$-regularity on $I$ if and only if $X$ has UMD.
\end{enumerate}
\end{theorem}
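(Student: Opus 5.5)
The plan is to treat the three assertions separately, reducing each to a known characterisation by choosing suitable test functions or test families. Sectoriality and the bounded $H^\infty$-calculus of angle $0$ are immediate, since $A$ acts diagonally on the $\ell^2$-index. For $\varphi\in H^\infty(\Sigma_\sigma)$ we have
\[
\Bigl(\sum_n|\varphi(2^n)x_n|^2\Bigr)^{1/2}\le\|\varphi\|_\infty\Bigl(\sum_n|x_n|^2\Bigr)^{1/2}
\]
pointwise, and the lattice norm of $X$ then gives $\|\varphi(A)\|\le\|\varphi\|_\infty$. Density of the domain and strong continuity of $S$ follow from order continuity, because finitely nonzero sequences are dense.

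For \ref{it:1Xexample}, suppose first that $X$ has finite cotype. Then so does $X(\ell^2)$, and the Khintchine--Maurey square function estimates hold in both spaces. We repeat the argument of Lemma \ref{lem:h-infty}, working in $X(\ell^2)$ and using the identification $\Rad(X(\ell^2))\simeq X(\ell^2(\ell^2))$ as Banach lattices. The uniform bound on $|\lambda/(\lambda-2^n)|$ then gives $R$-boundedness pointwise.

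For the converse we exhibit, assuming $X$ fails finite cotype, a failure of $R$-boundedness. In that case $X$ contains $\ell^\infty_N$'s uniformly, and by Maurey--Krivine these can be taken as disjoint normalized elements $u_1,\dots,u_N\in X$ with $\|\sum a_ju_j\|\eqsim\max|a_j|$. Take $x_j:=u_je_1$ and $\lambda_j$ on the negative axis, or the operators $2^{j}R(-2^{j},A)$-type resolvents, which act on coordinate $n$ with size $\approx1$ when $n= j$ and decay away. A cleaner choice is to use the $R$-boundedness of the family $\{tAS(t)\}$, equivalently of resolvents, tested on $x_j=u_j e_{n}$-type vectors arranged so that $T_jx_j=u_je_j$. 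The Rademacher average of the outputs then sees $\|(\sum_j|u_j|^2)^{1/2}\|$-like quantities spread over distinct $\ell^2$ coordinates, while the input average lives in one coordinate. Since $R$-bounds in a space without finite cotype cannot be computed via square functions, the comparison of $\E\|\sum\varepsilon_j u_j e_j\|$ with $\E\|\sum\varepsilon_ju_je_1\|$ is the crux. Using disjointness of the $u_j$, the first is $\eqsim\|\sum_j |u_j|\|$-ish in $\ell^2$-valued form, while the second is $\E\|\sum\varepsilon_ju_j\|\eqsim 1$. I would choose instead non-disjoint $u_j$ forming Rademacher-like $\ell^\infty_N$ copies so that the gap is $\sqrt N$ versus $1$. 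This is the main obstacle, namely picking test vectors so that the ratio blows up. I expect the cleanest route is via the characterisation that $R$-sectoriality of such Schauder multipliers forces the lattice square function estimate on the span, which fails without finite cotype.

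For \ref{it:2Xexample}, if $X$ is UMD then $X(\ell^2)$ is UMD. UMD lattices have finite cotype, so \ref{it:1Xexample} gives $R$-sectoriality of angle $0$, and \cite{Weis-MathAnn} yields maximal regularity (on $\R_+$ with the shift $\|S(t)\|\le e^{-2t}$, and hence on $(0,T)$). Conversely, we mimic Proposition \ref{prop:MRq=1}. Maximal regularity of $A$ on $X(\ell^2)$ gives an $\ell^2$-valued (in fact $\ell^2$-square-function) estimate for the family of convolution operators $f\mapsto\int_0^t 2^ne^{-2^n(t-s)}f(s)\,ds$ acting on $X$-valued functions. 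Choosing $f_n=\1_{J_m}\otimes x$ dyadically, as in the proof of Proposition \ref{prop:MRq=1}, these averages reproduce dyadic martingale differences or a discrete Hilbert-transform-type sum, giving a bounded lattice Hilbert transform or Hardy--Littlewood-type square function on $L^p(X)$. By Bourgain and Rubio de Francia this characterises UMD for Banach function spaces. Making this reduction precise, by identifying which lattice operator is recovered from the multiplier, is the second delicate step.
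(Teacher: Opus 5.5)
Your treatment of sectoriality, the $H^\infty$-calculus and the two forward implications is fine. In fact, your proof that finite cotype gives $R$-sectoriality (Khintchine--Maurey in the lattice $X(\ell^2)$ plus the uniform pointwise bound on $\lambda/(\lambda-2^n)$) is more direct than the paper's route through $q$-concavity, the triangular contraction property and \cite[Theorem 10.3.4(2)]{HNVW2}.

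The genuine gap is that neither converse is proved, and the sketches you give would not work. Take first $R$-sectoriality $\Rightarrow$ finite cotype. The operators $T_j=2^j(2^j+A)^{-1}$ (or $tAS(t)$) are diagonal in the $\ell^2$-index, so nothing maps $u_je_1$ to $u_je_j$. On inputs each supported in a single coordinate, every $T_j$ acts as a scalar of modulus at most $1$, so Kahane's contraction principle gives the $R$-bound however the $u_j$ are chosen. Moreover, $T_j$ multiplies coordinate $n$ by $a_{n-j}=(1+2^{n-j})^{-1}$, which is $\approx1$ for \emph{all} $n\le j$. It is a smoothed lower-triangular truncation, not a localisation at $n=j$, and this two-index structure is exactly what must be exploited.

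The paper takes disjoint positive $x_1,\dots,x_M$ spanning $\ell^\infty_M$ uniformly (lattice Maurey--Pisier), a $\frac12$-net $(e_m)$ of the unit ball of $\ell^2_N$, and sets $(y^j)_n=\sum_m(e_m)_jb_{j,n}x_m$. Then for every choice of signs $\|\sum_j\varepsilon_jy^j\|_{X(\ell^2)}\eqsim\|B\|_{\mathscr L(\ell^2_N)}$ and $\|\sum_j\varepsilon_jT_jy^j\|_{X(\ell^2)}\eqsim\|(a_{n-j}b_{j,n})\|_{\mathscr L(\ell^2_N)}$. Hence $R$-boundedness makes the Toeplitz--Schur multiplier $(a_{n-j})$ uniformly bounded on $\mathscr L(\ell^2_N)$. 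Since $\1_{\{k\le0\}}-a_k\in\ell^1(\Z)$, the triangular truncations would then be uniformly bounded too, contradicting Kwapie\'n--Pe{\l}czy\'nski. Your fallback (``$R$-sectoriality forces a lattice square-function estimate'') is not a citable result here; it is essentially a restatement of what has to be shown.

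For maximal regularity $\Rightarrow$ UMD, your first step (an $\ell^2$-estimate for the dyadic exponential convolutions) is correct, but the remainder is not an argument. The test functions $\1_{J_m}\otimes x$ from Proposition \ref{prop:MRq=1} produce a harmonic-sum blow-up specific to $\ell^1(\ell^r)$ and give no UMD criterion for general $X$. Bourgain--Rubio de Francia would require boundedness of the lattice maximal operator on both $L^p(X)$ and $L^{p'}(X^*)$, which you never derive.

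The paper closes this step as follows.
\begin{enumerate}
\item Using $0\in\varrho(A)$ and \cite[Theorem 17.3.31]{HNVW3}, pass to the convolution over $\R$, so that $(Tf)_n=2^n(2^n+D_t)^{-1}f_n$ and $\{2^n(2^n+D_t)^{-1}:n\ge1\}$ is $\ell^2$-bounded on $L^p(\R;X)$.
\item Positivity and the domination $\lambda e^{-\lambda s}\le2\cdot2^ne^{-2^ns}$ (for $s>0$, $2^n\le\lambda<2^{n+1}$) extend this to all $\lambda\ge2$.
\item Dilation invariance of $D_t$ extends it to all $\lambda>0$.
\item (The proof of) \cite[Theorem 2.4.9]{KLW19} then yields that $X$ is UMD. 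Finite cotype is available from part (1), since maximal regularity implies $R$-sectoriality.
\end{enumerate}
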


\begin{proof}
The sectoriality and boundedness of the $H^\infty$-calculus of angle $0$ follow as in Lemma \ref{lem:h-infty}.
The proofs of \ref{it:1Xexample} and \ref{it:2Xexample} are carried out in four steps.

\smallskip
{\em Step 1: Finite cotype implies $R$-sectoriality.} 
Suppose that $X$ has finite cotype. Then $X$ is $q$-concave for some $q\in [2, \infty)$ by \cite[Corollary 1.f.9]{LT79}. By Minkowski's inequality $X(\ell^2)$ is $q$-concave as well. From \cite[Proposition 1.f.3]{LT79} it follows that $X(\ell^2)$ has finite cotype, and thus has the triangular contraction property by \cite[Theorem 7.5.20]{HNVW2}. Therefore, the $R$-sectoriality follows from \cite[Theorem 10.3.4(2)]{HNVW2}.

\smallskip
{\em Step 2: $R$-sectoriality implies finite cotype.}
Suppose that $A$ is $R$-sectorial on $X(\ell^2)$ and assume that $X$ does not have finite cotype. Fix $N\geq 1$ and let $e_1,\ldots,e_M$ be a $1/2$-net in the unit ball of $\ell_N^2$, i.e.,
$$
\cbraceb{e \in \ell^2_N:\nrm{e}_{\ell^2_N} \leq 1} \subseteq \bigcup_{m=1}^M \cbraceb{e \in \ell^2_N:\nrm{e-e_m}_{\ell^2_N} \leq \tfrac12}.
$$
By the lattice Maurey--Pisier theorem \cite[Theorem 1.f.12]{LT79}, there are pairwise
disjointly supported positive functions $x_1,\ldots,x_M\in X$ such that
\begin{equation}\label{eq:MP}
        \frac12\max_{1\le m\le M}|c_m| \le \nrmB{\sum_{m=1}^M c_mx_m}_X\le \max_{1\le m\le M}|c_m|
\end{equation}
for all scalars $c_1,\ldots,c_M$.

For $j=1,\ldots,N$ put
\[
        T_j:=2^j(2^j+A)^{-1}=(-2^j)R(-2^j,A).
\]
Then, for $x=(x_n)_{n\ge1}\in X(\ell^2)$,
\[
        T_jx = \Bigl(\frac{x_n}{1+2^{n-j}}\Bigr)_{n\ge1} =\bigl(a_{n-j}x_n\bigr)_{n\ge1},    \qquad    a_k:=\frac{1}{1+2^k}.
\]
By assumption the family $\{T_j:j\ge1\}$ is $R$-bounded on
$X(\ell^2)$.

Let $B=(b_{j,n})_{j,n=1}^N$ be a scalar matrix. For $j=1,\ldots,N$, define
$y^j\in X(\ell^2)$ by
\[
        (y^j)_n:=\sum_{m=1}^M (e_m)_j b_{j,n}x_m,    \qquad 1\le n\le N,
\]
and put $(y^j)_n=0$ for $n>N$. Using \eqref{eq:MP} and the disjointness of the $x_m$'s, for every choice of signs
$(\varepsilon_j)_{j=1}^N$ we have
\begin{align*}
 \nrmB{\sum_{j=1}^N \varepsilon_j y^j
        }_{X(\ell^2)} \eqsim\max_{1\le m\le M}\brB{\sum_{n=1}^N\absB{\sum_{j=1}^N \varepsilon_j(e_m)_j b_{j,n}}^2}^{1/2}.
\end{align*}
Therefore, since the $e_m$'s form a $1/2$-net of the unit ball of $\ell^2_N$, we have
\begin{align*}
  \|B\|_{\ms{L}(\ell_N^2)} = \left\|(\varepsilon_j b_{j,n})_{j,n=1}^N\right\|_{\ms L(\ell_N^2)} &\eqsim \max_{1\le m\le M} \brB{\sum_{n=1}^N \absB{\sum_{j=1}^N (e_m)_j \varepsilon_j b_{j,n} }^2}^{1/2} \\&\eqsim \nrmB{\sum_{j=1}^N \varepsilon_j y^j  }_{X(\ell^2)}.
\end{align*}
The same computation applied to $T_jy^j$ yields
\[
\|(a_{n-j}b_{j,n})_{j,n=1}^N\|_{\mc{L}(\ell_N^2)} \eqsim  \nrmB{\sum_{j=1}^N \varepsilon_j T_j y^j}_{X(\ell^2)}.
\]
Letting the $\varepsilon_j$'s be the pointwise evaluations of a Rademacher sequence, taking expectations in the preceding two displays and using the $R$-boundedness of $\{T_j:j\ge1\}$, we obtain
\begin{equation}\label{eq:smoothtoeplitz}
  \left\| (a_{n-j}b_{j,n})_{j,n=1}^N \right\|_{\mathscr L(\ell_N^2)}  \lesssim\|B\|_{\mathscr L(\ell_N^2)}
\end{equation}
with a constant only depending on the $R$-sectoriality constant of $A$.

It remains to replace the smooth Toeplitz--Schur multiplier by the sharp lower triangular Schur projection. Note that
\[
        \mathbf \1_{\{k\le0\}}-a_k =   \mathbf \1_{\{k\le0\}}\frac{2^k}{1+2^k} - \1_{\{k>0\}}\frac{1}{1+2^k}   \in\ell^1(\mathbb Z).
\]
Since Toeplitz--Schur multipliers on $\mc L(\ell_N^2)$ whose kernels belong to $\ell^1(\Z)$ are bounded uniformly in $N$, \eqref{eq:smoothtoeplitz} implies that the family of lower triangular Schur projections
\[
        \Delta_NB := \bigl(\mathbf \1_{\{n-j\le 0\}}b_{j,n}\bigr)_{j,n=1}^N
\]
is uniformly bounded on $\mathscr L(\ell_N^2)$. This contradicts the Kwapie\'n--Pe{\l}czy\'nski triangular-projection theorem, see \cite[Lemma 7.5.12]{HNVW2}. Thus $X$ has finite cotype, finishing the proof of \ref{it:1Xexample}.
\smallskip

As  in Proposition \ref{prop:MRq=1}, in the proof of \ref{it:2Xexample} it suffices to work with $I=\R_+$.

\smallskip
{\em Step 3: The UMD property implies maximal regularity.}
Assume that $X$, and thus $X(\ell^2)$, is UMD. By \ref{it:1Xexample}, the operator $A$ is $R$-sectorial of angle $0$ on $X(\ell^2)$, which implies that $A$ has maximal $L^p$-regularity on $\R_+$.

\smallskip
{\em Step 4: Maximal regularity implies the UMD property.}
Assume that $A$ has maximal $L^p$-regularity on $\R_+$. Since $0\in\varrho(A)$, \cite[Theorem 17.3.31]{HNVW3} yields that the convolution operator
\[
        Tf(t)=\int_{-\infty}^t Ae^{-(t-s)A}f(s)\dd s,
        \qquad t\in\R,
\]
is bounded on $L^p(\R;X(\ell^2))$. Moreover, by \cite[Theorem 17.3.1]{HNVW3} we know that $A$ is $R$-sectorial, and thus that $X$ has finite cotype by \ref{it:1Xexample}.

 Let $D_t$ be the differentiation operator on $L^p(\mathbb R;X)$ with domain $W^{1,p}(\mathbb R;X)$. Take $f = (f_n)_{n\geq 1}  \in L^p(\mathbb R;X(\ell^2))$. Then note that for $t \in \R$ we have
\begin{align*}
  (Tf)_n(t) &=\int_{-\infty}^t 2^ne^{-2^n(t-s)}f_n(s)\dd s= \bigl(2^n(2^n+D_t)^{-1}f_n\bigr)(t)
\end{align*}
and therefore the boundedness of $T$ on $L^p(\mathbb{R};X(\ell^2))$ implies that
\begin{align*}
  \cbrace{2^n(2^n+D_t)^{-1}:n\geq 1}
\end{align*}
is $\ell^2$-bounded.

For $\lambda>0$, the operator
$\lambda(\lambda+D_t)^{-1}$ is convolution with the positive kernel
\[
        k_\lambda(s) := \lambda e^{-\lambda s}\1_{(0,\infty)}(s).
\]
If $2^n\leq\lambda<2^{n+1}$, then
\[
        0\leq k_\lambda(s)\leq 2k_{2^n}(s),  \qquad s\in\R.
\]
By positivity, the $\ell^2$-boundedness of $\{2^n(2^n+D_t)^{-1}:n\geq1\}$ therefore implies the $\ell^2$-boundedness of $\{\lambda(\lambda+D_t)^{-1}:\lambda\geq2\}.$ Dilation invariance of $D_t$ then gives the same conclusion for all $\lambda>0$. Therefore, by (the proof of) \cite[Theorem 2.4.9]{KLW19} we conclude that $X$ is UMD.
\end{proof}

Using Theorem \ref{thm:Xexample} with a non-UMD Banach function space $X$ of non-trivial type gives the following consequence. Such spaces exist by \cite[Section 4.3.c]{HNVW1}. Moreover, $X$ and $X^*$ have finite cotype by \cite[Proposition 7.1.13 and Theorem 7.1.14]{HNVW2}.

\begin{corollary}\label{cor:dual-stable-counterexample}
There exist a Banach function space $X$ and a sectorial operator $A$ on $X$ such that both $A$ and $A^*$ are $R$-sectorial and have a bounded $H^\infty$-calculus of angle $0$, but neither $A$ nor $A^*$ has maximal $L^p$-regularity on any bounded interval or on $\R_+$, for any $p\in(1,\infty)$.
\end{corollary}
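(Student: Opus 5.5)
The plan is to apply Theorem \ref{thm:Xexample} to a suitable non-UMD Banach function space and then transfer everything to the adjoint via duality.

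\emph{Choice of space.} I take an order-continuous Banach function space $X$ that has non-trivial type but fails UMD; such spaces exist by \cite[Section 4.3.c]{HNVW1}. Non-trivial type gives that $X$ is $K$-convex, so $X^*$ also has non-trivial type. Hence both $X$ and $X^*$ have finite cotype. I also want $X$ reflexive, or at least $X^*$ order-continuous with $X^*$ again a Banach function space, so that Theorem \ref{thm:Xexample} applies on the dual side. I expect this to hold for the standard examples (for instance a suitable reflexive non-UMD Banach function space). If it fails, I will replace $X$ by a reflexive example of that kind. The operator is $A$ from \eqref{eq:def-A} on $X(\ell^2)$, which is itself a Banach function space. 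By Theorem \ref{thm:Xexample}, $A$ has a bounded $H^\infty$-calculus of angle $0$ and is $R$-sectorial of angle $0$, since $X$ has finite cotype. It fails maximal $L^p$-regularity on $I=(0,T)$ and on $\R_+$ for every $p\in(1,\infty)$, since $X$ is not UMD.

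\emph{The adjoint.} Under reflexivity, $X(\ell^2)^*=X^*(\ell^2)$ isomorphically via the Köthe dual pairing $\langle x,y\rangle=\sum_n\int x_ny_n$. Under this identification $A^*$ is again the diagonal multiplier $(A^*y)_n=2^ny_n$ on $X^*(\ell^2)$, and it is densely defined because the space is reflexive. I also need $X^*$ to be order-continuous, which follows from reflexivity. Then Theorem \ref{thm:Xexample}, applied to $X^*$, gives three facts:
\begin{enumerate}[label={\rm(\roman*)}, leftmargin=*]
\item $A^*$ has a bounded $H^\infty$-calculus of angle $0$;
\item $A^*$ is $R$-sectorial, because $X^*$ has finite cotype;
\item $A^*$ fails maximal $L^p$-regularity, because $X^*$ is not UMD (UMD is self-dual, and $X$ is not UMD).
\end{enumerate}

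\emph{Main obstacle.} The main obstacle is the functional-analytic bookkeeping. I must make sure the chosen non-UMD space with non-trivial type can be taken reflexive, with $X^*$ again an order-continuous Banach function space, and that the duality $X(\ell^2)^*\simeq X^*(\ell^2)$ identifies $A^*$ with the diagonal multiplier. If reflexivity of the example is in doubt, I will try the following fallback. Non-trivial type rules out uniform copies of $\ell^1_n$, and for lattices this should yield no copy of $c_0$ or $\ell^1$, hence reflexivity. This should settle the matter.
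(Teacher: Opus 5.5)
Your proposal is correct and follows essentially the same route as the paper: you take a non-UMD Banach function space $X_0$ of non-trivial type, apply Theorem \ref{thm:Xexample} on $X_0(\ell^2)$, and then apply it again on $X_0^*(\ell^2)$ via the identification $(X_0(\ell^2))^*=X_0^*(\ell^2)$, using that the UMD property is self-dual. Two small differences from the paper: your reflexivity worry is already settled by your own fallback (non-trivial type excludes copies of $c_0$ and $\ell^1$, so the lattice is reflexive), whereas the paper just notes that finite cotype of $X_0$ and $X_0^*$ gives order continuity of both; and finite cotype of $X^*$ follows more directly from the elementary fact that type $p$ of $X$ gives cotype $p'$ of $X^*$, so Pisier's $K$-convexity theorem is not needed.
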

\begin{proof} 
Choose a non-UMD Banach function space $X_0$ with non-trivial type, as in \cite[Section 4.3.c]{HNVW1}. Then $X_0$ and $X_0^*$ have finite cotype. In particular, both have order-continuous norm, and the canonical duality identifies $\bigl(X_0(\ell^2)\bigr)^* =  X_0^*(\ell^2).$ Put $X:=X_0(\ell^2)$ and let $A$ be the diagonal operator defined by \eqref{eq:def-A}. Under the preceding duality, $A^*$ is the corresponding diagonal operator on $X_0^*(\ell^2)$.

Since both $X_0$ and $X_0^*$ have finite cotype, Theorem \ref{thm:Xexample} shows that $A$ and $A^*$ are $R$-sectorial of angle $0$. Both operators have a bounded $H^\infty$-calculus of angle $0$. The UMD property is self-dual. Since $X_0$ is not UMD, neither is $X_0^*$. Another application of Theorem \ref{thm:Xexample} shows that neither $A$ nor $A^*$ has maximal $L^p$-regularity, for any $p\in(1,\infty)$, on any bounded interval or on $\R_+$.
\end{proof}

To the best of our knowledge, it remains open whether an example similar to that in Corollary \ref{cor:dual-stable-counterexample}, without the $H^\infty$-calculus requirement, can be constructed on $\ell^1$ or $L^1$. Standard ways of transferring the preceding construction to these spaces appear to destroy $R$-sectoriality. 

On the other hand, on $\ell^1$ and $L^1$, a bounded $H^\infty$-calculus of angle $<\frac{\pi}{2}$ implies maximal $L^p$-regularity for every $p\in[1,\infty)$. For $p>1$ this is \cite[Theorem 7.5]{KWcalc}. We briefly indicate why the endpoint $p=1$ follows from a similar argument. Since $\ell^1$ and $L^1$ are GT-spaces of cotype $2$, for
$\omega_{H^\infty}(A)<\nu<\frac{\pi}{2}$ and $0<s<1$ it follows from \cite[Proposition 7.1]{KWcalc} that
\[
\int_{\Gamma_\nu} \|A^sR(\zeta,A)x\|\frac{|\dd\zeta|}{|\zeta|^s}\lesssim \|x\|, \qquad x\in X.
\]
Using the contour representation from \cite[Proposition 4.2]{KWcalc},
\[
        Ae^{-tA}x =-\frac{1}{2\pi i} \int_{\Gamma_\nu} \zeta^{1-s}e^{-t\zeta}A^sR(\zeta,A)x\dd\zeta,
\]
and the estimate $\operatorname{Re}\zeta\gtrsim|\zeta|$ on $\Gamma_\nu$, integration first with respect to $t$ gives
\[
        \int_0^\infty\|Ae^{-tA}x\|\dd t \lesssim \int_{\Gamma_\nu} \|A^sR(\zeta,A)x\|\frac{|\dd\zeta|}{|\zeta|^s} \lesssim \|x\|.
\]
It follows from Fubini's theorem that the operator
\[
        Tf(t) := \int_0^t Ae^{-(t-s)A}f(s)\dd s
\]
is bounded on $L^1(\R_+;X)$. For simple functions with values in $\Dom(A)$, the corresponding mild solution is a strong solution and $Au_f=Tf$. By density and closedness, the same conclusion holds for all $f\in L^1(\R_+;X)$. Thus $A$ has maximal $L^1$-regularity. Note that the case $p>1$ follows by extrapolation. 

\section{On the \texorpdfstring{$R$}{R}-boundedness of stochastic convolution operators}\label{sec:Rbdd}
The counterexample in Theorem \ref{thm:mainSMR} shows that a bounded $H^\infty$-calculus alone does not imply stochastic maximal regularity on UMD spaces with type $2$. In this section, we study the additional geometric condition underlying the known positive results. This condition is formulated as an $R$-boundedness property for elementary stochastic convolution operators. We consider two versions: condition $(S_p)$ involving normalised interval kernels used in \cite{NVW-SMR,NVW-Rbounded}, and condition $(S_p^{\exp})$ involving exponential kernels, which was introduced in \cite{NVW-survey}. 
The respective proofs of stochastic maximal regularity in these works rely on these two apparently different formulations.

In Subsection \ref{ss:triangular}, we prove that $(S_p)$ and $(S_p^{\exp})$ are equivalent. 
We then show in Subsection \ref{ss:casep2} that the endpoint condition $(S_2)$ holds precisely for those Banach spaces that are isomorphic to a Hilbert space.
Finally, we show that condition $(S_p)$ is not only a sufficient criterion for stochastic maximal regularity, but is also necessary for stochastic maximal regularity of two canonical operators with bounded $H^\infty$-calculi of angle $0$. Indeed, Subsection \ref{ss:Schauder} treats a diagonal Schauder multiplier on $\Rad^p(X)$, whereas Subsection \ref{ss:Laplace} treats the Laplacian $-\Delta$ on $L^q(\R^d;X)$. In both cases, stochastic maximal $L^p$-regularity is characterised by condition $(S_p)$ on the underlying space $X$.

\subsection{Definition and properties}\label{ss:RbddX}
For unexplained terminology, the reader is referred to \cite{HNVW1, HNVW2, HNVW3}. Let $X$ be a UMD Banach space with type $2$ and let $p\in[2,\infty)$. Let $k\in L^2(\R_+)$ and let $S_k$ denote the stochastic convolution operator
\[
    (S_k G)(t) :=\int_0^t k(t-s) G(s)\dd W(s), \qquad t\geq 0,
\]
where $G\in L^p_{\mathscr F}(\Omega\times \R_+;X)$. Here the subscript $\mathscr{F}$ stands for the subspace of progressively measurable processes.
Since $X$ has type $2$, the It\^o isomorphism and Young's
inequality give
\[
        \|S_kG\|_{L^p(\Omega\times\R_+;X)}\lesssim_{p,X}\|k\|_{L^2(\R_+)}        \|G\|_{L^p(\Omega\times\R_+;X)}.
\]
Thus $S_k$ extends uniquely to
$L^p_{\mathscr F}(\Omega\times\R_+;X)$.

For $\lambda\in\C$ with $\Re\lambda>0$, define
\[
        k_\lambda(t):=\lambda^{1/2}e^{-\lambda t},
        \qquad t>0,
\]
where the principal branch of the square root is used.
For $\delta>0$, define
\[
        h_\delta(t):=\delta^{-1/2}\1_{(0,\delta)}(t),
        \qquad t>0.
\]
\begin{definition}\label{def:Rbddconv}
Let $p \in [2,\infty)$ and let $X$ be a UMD space with type $2$.
\begin{enumerate}[\rm (1)]
\item The space $X$ is said to satisfy condition $(S_p^{\exp})$ if for every $\nu\in (0,\pi/2)$ the family $\{S_{k_\lambda}:\lambda\in \Sigma_{\nu}\}$
is $R$-bounded as a subset of
\[
    \mathscr L\big(L^p_{\mathscr F}(\Omega\times\mathbb R_+;X),L^p(\Omega\times\mathbb R_+;X) \big).
\]
\item The space $X$ is said to satisfy condition $(S_p)$ if the family $\{S_{h_{\delta}}:\delta>0\}$
is $R$-bounded as a subset of $$ \mathscr L\big(L^p_{\mathscr F}(\Omega\times\mathbb R_+;X),L^p(\Omega\times\mathbb R_+;X) \big).$$
\end{enumerate}
\end{definition}

One can check that the $R$-boundedness of $\{S_{k_\lambda}:\lambda>0\}$ immediately implies $(S_p^{\exp})$. Indeed, for $\lambda_n\in \Sigma_{\nu}$ this follows by writing
\begin{align*}
\varepsilon_n (S_{k_{\lambda_n}} G_n)(t)  =  \varepsilon_n \frac{\lambda^{1/2}_n}{\Re(\lambda_n)^{1/2}} e^{-i\Im(\lambda_n)t}  \int_0^t  k_{\Re(\lambda_n)}(t-s) e^{i\Im(\lambda_n)s} G_n(s)\dd W(s),
\end{align*}
and using the  Kahane contraction principle with $\frac{|\lambda|}{|\Re(\lambda)|} \leq C_{\nu}$. The converse will be proved in Subsection \ref{ss:triangular}.  

Given $k\in L^2(\R_+)$, let $N_k:L^p(\R_+;X)\to L^p(\R_+;\gamma(\R_+;X))$ be defined by
\[N_kG = t \mapsto [s\mapsto \1_{0<s<t}k(t-s)G(s)].\]
The following lemma is a consequence of the It\^o isomorphism in UMD spaces and can be proved in the same way as \cite[Theorem 7.1]{NVW-Rbounded}.
\begin{lemma}\label{lem:gammaRbdd}
Let $X$ be a UMD space with type $2$, let
$p\in[2,\infty)$, and let $I=(0,T)$ or $I=\R_+$. Let $\mathcal{K}\subseteq L^2(\R_+)$.
Then the following assertions are equivalent:
\begin{enumerate}[label={\rm(\arabic*)}, leftmargin=*]
\item The family $\{S_k:k\in \mathcal{K}\}$ is $R$-bounded on $L^p_{\mathscr F}(\Omega\times I;X)$.
\item The family $\{N_k:k\in \mathcal{K}\}$ is $R$-bounded from $L^p(I;X)$ into $L^p(I;\gamma(I;X))$.
\end{enumerate}
\end{lemma}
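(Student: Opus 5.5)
We will prove the equivalence by passing through the Itô isomorphism at the level of Rademacher sums, using randomisation in the auxiliary probability space that defines $R$-boundedness. Let $(\varepsilon_n)_{n\geq1}$ be a Rademacher (Steinhaus) sequence on a probability space $\Omega'$, independent of $(\Omega,W)$. Fix $k_1,\dots,k_N\in\mathcal K$ and progressively measurable $G_1,\dots,G_N\in L^p_{\mathscr F}(\Omega\times I;X)$. The first step is to observe that for fixed $\omega'\in\Omega'$ and $t\in I$,
\[
\sum_{n=1}^N\varepsilon_n(\omega')(S_{k_n}G_n)(t)=\int_0^t\sum_{n=1}^N\varepsilon_n(\omega')k_n(t-s)G_n(s)\dd W(s).
\]
The integrand is progressively measurable in $s$ with respect to the original filtration. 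By the Itô isomorphism \eqref{eq:Ito}, applied pathwise in $\omega'$ and for each $t$ and then integrated over $t$ and $\omega'$, we get
\[
\E'\Big\|\sum_n\varepsilon_nS_{k_n}G_n\Big\|^p_{L^p(\Omega\times I;X)}\eqsim_{p,X}\E'\E\int_I\Big\|\sum_n\varepsilon_n N_{k_n}G_n(t)\Big\|^p_{\gamma(I;X)}\dd t .
\]
Thus the left-hand side of the $R$-bound for $\{S_k\}$ equals, up to constants, the Rademacher average of $\{N_k\}$ applied to the (now random) inputs $G_n(\omega,\cdot)\in L^p(I;X)$.

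For (2)$\Rightarrow$(1), we apply the $R$-boundedness of $\{N_k\}$ for each fixed $\omega\in\Omega$ to the deterministic functions $G_n(\omega,\cdot)$. This bounds the right-hand side above by $C^p\,\E\E'\|\sum_n\varepsilon_nG_n\|^p_{L^p(I;X)}$, and Fubini gives the claim. By Kahane–Khintchine we may use the $p$-th moment in the definition of $R$-boundedness throughout.

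For (1)$\Rightarrow$(2), we take deterministic $G_n\in L^p(I;X)$. These are trivially progressively measurable, so they belong to $L^p_{\mathscr F}$. The displayed equivalence then reads exactly as the $R$-bound for $\{N_k\}$ on $L^p(I;X)$ once the $R$-bound for $\{S_k\}$ is available, since $\E$ is trivial on the right. By density of simple functions it suffices to treat simple $G_n$, and for these all integrals are well defined.

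The main obstacle is justifying the Itô isomorphism step. Two points need care. First, the random variables $\varepsilon_n$ multiply the stochastic integrands, and we must ensure these remain adapted. This is why we freeze $\omega'$, or equivalently enlarge the filtration by $\sigma(\varepsilon_1,\dots,\varepsilon_N)$ at time $0$; independence from $W$ keeps $W$ a Brownian motion, so the Itô isomorphism still applies. Second, the integrand $s\mapsto\mathbf 1_{s<t}k(t-s)G(s)$ must define an element of $L^p(\Omega;\gamma(I;X))$. This follows from type $2$, which gives $L^2(I;X)\hookrightarrow\gamma(I;X)$, together with $k\in L^2$ and Young's inequality as noted above. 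Measurability in $t$ of $t\mapsto N_kG(t)$ in $\gamma(I;X)$ is handled by the same approximation by simple functions. All of this mirrors \cite[Theorem 7.1]{NVW-Rbounded}.
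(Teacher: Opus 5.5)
Your proof is correct and is essentially the argument the paper has in mind when it defers to \cite[Theorem 7.1]{NVW-Rbounded}. You apply the It\^o isomorphism \eqref{eq:Ito} for fixed Rademacher signs and each $t$, then use the $R$-bound for $\{N_k\}$ pathwise in $\omega$ for (2)$\Rightarrow$(1), and deterministic inputs for (1)$\Rightarrow$(2), where only the Gaussian two-sided estimate is needed while UMD enters through the upper It\^o estimate for adapted integrands in (2)$\Rightarrow$(1).
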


The definitions above are made on $\mathbb R_+$, but in applications one
often works on finite intervals. The following elementary observation shows
that, for dilation-invariant families of kernels, this distinction is
immaterial. Thus the $R$-boundedness conditions may be checked on any
convenient finite time interval.

\begin{lemma}[Independence of time interval]\label{lemma:timeintervalSp}
Let $X$ be a UMD space with type $2$, let
$p\in[2,\infty)$, and let $T\in (0,\infty)$. Suppose that $\mathcal{K}\subseteq L^2(\R_+)$ is invariant under dilation, i.e., $k\in \mathcal{K} \Rightarrow t\mapsto \theta^{1/2} k(\theta t)\in \mathcal{K}$ for all $\theta>0$. Then the following assertions are equivalent:
\begin{enumerate}[label={\rm(\arabic*)}, leftmargin=*]
\item The family $\{S_k:k\in \mathcal{K}\}$ is $R$-bounded on $L^p_{\mathscr F}(\Omega\times\mathbb R_+;X)$.
\item The family $\{S_k:k\in \mathcal{K}\}$ is $R$-bounded on $L^p_{\mathscr F}(\Omega\times(0,T);X)$.
\end{enumerate}
\end{lemma}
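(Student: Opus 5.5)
The plan is to prove both implications directly: (1)$\Rightarrow$(2) by extension by zero and restriction, and (2)$\Rightarrow$(1) by rescaling together with a limiting argument.

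\emph{Proof of (1)$\Rightarrow$(2).} Given $G_1,\dots,G_N\in L^p_{\mathscr F}(\Omega\times(0,T);X)$, extend each by zero to $\R_+$. The extension is still progressively measurable. By causality of the stochastic convolution, $(S_kG_j)(t)$ for $t\in(0,T)$ depends only on $G_j|_{(0,T)}$. Restricting the $\R_+$-estimate to $t\in(0,T)$ can only decrease the left-hand side. The right-hand sides coincide. This gives (2) with the same $R$-bound.

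\emph{Proof of (2)$\Rightarrow$(1).} Here I would use dilation invariance. For $\theta>0$ define
\[
(D_\theta G)(t):=G(\theta t)
\]
on processes, together with the time-changed Brownian motion $\widetilde W(t):=\theta^{-1/2}W(\theta t)$ and the filtration $\mathcal F_{\theta t}$. A change of variables $s=\theta s'$ gives
\[
(S_kG)(\theta t)=\int_0^{t}\theta^{1/2}k(\theta(t-s'))\,G(\theta s')\dd\widetilde W(s')=(\widetilde S_{k^\theta}D_\theta G)(t),
\qquad k^\theta(t):=\theta^{1/2}k(\theta t),
\]
where $\widetilde S$ denotes stochastic convolution with respect to $\widetilde W$. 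The $R$-bound in (2) is a statement about the laws of the processes only. By the It\^o isomorphism and Lemma \ref{lem:gammaRbdd}, it is equivalent to a deterministic $R$-boundedness of $\{N_k\}$ from $L^p(0,T;X)$ into $L^p(0,T;\gamma(0,T;X))$. Working directly with the deterministic operators $N_k$ is the cleanest way to avoid filtration issues. Since $\mathcal K$ is dilation invariant, $\{N_{k^\theta}:k\in\mathcal K\}=\{N_k:k\in\mathcal K\}$.

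Combining this identity with the isometry-up-to-factor of $D_\theta$ on $L^p$ (the factor $\theta^{-1/p}$ appears on both sides and cancels), as well as a compatible rescaling on $\gamma(0,T;X)$, the $R$-bound on $(0,T)$ transfers to $(0,T\theta)$ with the same constant for every $\theta>0$. Hence we obtain a uniform $R$-bound on $(0,T')$ for all $T'>0$.

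Finally I pass to $\R_+$. Take $G_j$ supported in a compact set; this class is dense. Apply the bound on $(0,T')$, which by the causality observation equals the $\R_+$-norm truncated to $(0,T')$, and let $T'\to\infty$. Monotone convergence gives the $\R_+$ estimate, and density with the $L^2$-boundedness of each $S_k$ extends it to all $G$.

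I expect the only delicate point to be the rescaling of the $\gamma(0,T;X)$ norm, i.e.\ checking that $\|s\mapsto\phi(\theta s)\|_{\gamma(0,T;X)}=\theta^{-1/2}\|\phi\|_{\gamma(0,\theta T;X)}$. This follows because $f\mapsto\theta^{1/2}f(\theta\cdot)$ is unitary from $L^2(0,\theta T)$ onto $L^2(0,T)$. The $\theta^{1/2}$ in the kernel normalisation exactly compensates this factor, so the computation closes without a stray power of $\theta$.
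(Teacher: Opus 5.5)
Your proposal is correct and follows essentially the same route as the paper: restriction for (1)$\Rightarrow$(2), and for (2)$\Rightarrow$(1) you pass to the deterministic operators $N_k$ via Lemma \ref{lem:gammaRbdd}, rescale time using the unitary $f\mapsto\theta^{1/2}f(\theta\,\cdot)$ on $L^2$ and the dilation invariance of $\mathcal K$ to get an $R$-bound on $(0,T')$ independent of $T'$, and let $T'\to\infty$ by monotone convergence. The paper applies the uniform estimate directly to the restrictions $G|_{(0,T')}$, whose norms are bounded by the full $L^p(\R_+)$ norms, so your compact-support density step is not needed.
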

\begin{proof}
The implication from $\R_+$ to $(0,T)$ follows by restriction. We prove the converse, for which it suffices to consider $N_k$ by Lemma \ref{lem:gammaRbdd}.  Write $N_k^T$ when the time interval is $(0,T)$. Suppose that the family
$\{N_k^T:k\in\mathcal K\}$
is $R$-bounded, with $R$-bound at most $C$.
Fix $T'>0$ and put $\theta:=\frac{T'}{T}.$ For $k\in\mathcal K$, define
\[
        k^{(\theta)}(u):=\theta^{1/2}k(\theta u), \qquad u>0.
\]
By dilation invariance, $k^{(\theta)}\in\mathcal K$. For
$G\in L^p(0,T';X)$, put
\[
        G_\theta(r):=G(\theta r),
        \qquad 0<r<T,
\]
and let $V_\theta:L^2(0,T')\longrightarrow L^2(0,T)$ be given by
\[
        (V_\theta f)(r):=\theta^{1/2}f(\theta r).
\]
Then $V_\theta$ is unitary. A change of variables gives
\[
        (N_k^{T'}G)(\theta\tau)
        =
        (N_{k^{(\theta)}}^TG_\theta)(\tau)\circ V_\theta,
        \qquad 0<\tau<T.
\]
The ideal property of $\gamma$-radonifying operators therefore gives
\[
        \|(N_k^{T'}G)(\theta\tau)\|_{\gamma(0,T';X)}
        =
        \|(N_{k^{(\theta)}}^TG_\theta)(\tau)\|_{\gamma(0,T;X)}.
\]

Let $k_1,\ldots,k_N\in\mathcal K$ and
$G_1,\ldots,G_N\in L^p(0,T';X)$. It follows that
\[
\begin{aligned}
&\Big(
\E_\varepsilon
\Big\|
\sum_{j=1}^N\varepsilon_jN_{k_j}^{T'}G_j
\Big\|_{L^p(0,T';\gamma(0,T';X))}^2
\Big)^{1/2}
\\
&\qquad
=
\theta^{1/p}
\Big(
\E_\varepsilon
\Big\|
\sum_{j=1}^N
\varepsilon_jN_{k_j^{(\theta)}}^TG_{j,\theta}
\Big\|_{L^p(0,T;\gamma(0,T;X))}^2
\Big)^{1/2}
\\
&\qquad
\leq
C\theta^{1/p}
\Big(
\E_\varepsilon
\Big\|
\sum_{j=1}^N\varepsilon_jG_{j,\theta}
\Big\|_{L^p(0,T;X)}^2
\Big)^{1/2}
\\
&\qquad
=
C
\Big(
\E_\varepsilon
\Big\|
\sum_{j=1}^N\varepsilon_jG_j
\Big\|_{L^p(0,T';X)}^2
\Big)^{1/2}.
\end{aligned}
\]
Thus the $R$-bound on $(0,T')$ is independent of $T'$. Applying this estimate to restrictions to $(0,T')$ and letting $T'\to\infty$, monotone convergence gives the asserted $R$-boundedness on $\R_+$. 
\end{proof}

The following result was proved for $X = L^q$ with $q\in [2, \infty)$  under the condition $(S_p)$ in \cite{NVW-SMR} and for general $X$ under the condition $(S_p^{\exp})$ in \cite{NVW-survey}. 
The papers \cite{NVW-SMR,NVW-survey} provide our main motivations to consider the $R$-boundedness conditions of Definition~\ref{def:Rbddconv}.  
\begin{theorem}\label{thm:survey}
Let $X$ be a UMD Banach space with type $2$. Let $p\in [2, \infty)$ and suppose that $(S_p^{\exp})$ holds. If $A$ has a bounded $H^\infty$-calculus of angle $<\pi/2$, then $A$ has stochastic maximal $L^p$-regularity.
\end{theorem}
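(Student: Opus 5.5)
The plan is to reduce stochastic maximal regularity, via the $\gamma$-formulation \eqref{eq:SMRgamma}, to an $R$-boundedness statement for the family $\{N_{k_\lambda}\}$. By Lemma \ref{lem:gammaRbdd}, $(S_p^{\exp})$ is equivalent to $R$-boundedness of $\{N_{k_\lambda}:\lambda\in\Sigma_\nu\}$ from $L^p(\R_+;X)$ into $L^p(\R_+;\gamma(\R_+;X))$ for each $\nu<\pi/2$. Since $A$ has a bounded $H^\infty$-calculus of angle $\omega<\pi/2$, we fix $\omega<\nu'<\nu<\pi/2$ and write the kernel $A^{1/2}S(t-s)$ as a contour integral over $\Gamma_{\nu'}:=\partial\Sigma_{\nu'}$:
\[
A^{1/2}S(t)=\frac{1}{2\pi i}\int_{\Gamma_{\nu'}} \zeta^{1/2}e^{-t\zeta}R(\zeta,A)\dd\zeta
=\frac{1}{2\pi i}\int_{\Gamma_{\nu'}} k_\zeta(t)\,R(\zeta,A)\dd\zeta .
\]
Here $k_\zeta(t)=\zeta^{1/2}e^{-\zeta t}$, and $\Re\zeta>0$ on $\Gamma_{\nu'}$. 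The identity holds first on a dense range, for instance for $x=\varphi(A)y$ with $\varphi$ having polynomial decay at $0$ and $\infty$. Thus, formally,
\[
N(G)(t)=\frac{1}{2\pi i}\int_{\Gamma_{\nu'}} R(\zeta,A)\,N_{k_\zeta}G(t)\dd\zeta ,
\]
where $R(\zeta,A)$ acts pointwise on the $\gamma$-space.

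A naive integration of this representation fails: $\|R(\zeta,A)\|\sim|\zeta|^{-1}$, so the integral diverges logarithmically. This is where the $H^\infty$-calculus enters. I would use the Kalton--Weis square function or dilation technique: the bounded $H^\infty$-calculus yields, for $\psi(z)=z^{1/2}(1+z)^{-1}$ (or similar), the two-sided estimates
\[
\|x\|\eqsim\bigl\|t\mapsto\psi(tA)x\bigr\|_{\gamma(\R_+,\frac{dt}{t};X)},
\]
together with the dual version on $X^*$. Since UMD spaces have finite cotype, $X$ has Pisier's property $(\alpha)$-free analogues available, and these apply in $L^p(\R_+;X)$ and in $L^p(\R_+;\gamma(\R_+;X))$ via the Fubini isomorphism for $\gamma$-spaces. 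Concretely, I would insert a decomposition $x=\int_0^\infty \psi_1(uA)\psi_2(uA)x\,\frac{du}{u}$ and write
\[
A^{1/2}S(t-s)G(s)=\int_0^\infty \psi_1(uA)\,\bigl[u^{-1/2}\phi_u(t-s)\bigr]\,\psi_2(uA)G(s)\frac{du}{u},
\]
with scalar kernels of the form $u^{-1/2}\phi((t-s)/u)$ dominated, through the Cauchy integral in $\zeta$ along rays, by superpositions of $k_\lambda$ with $\lambda\in\Sigma_\nu$ and integrable weights. The $R$-boundedness of $\{N_{k_\lambda}\}$ is preserved under such absolutely convergent averages, since the $R$-bound of the closed absolutely convex hull is controlled. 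Combined with the $\gamma$-square function estimates on both sides, this yields
\[
\|N G\|_{L^p(\gamma)}\lesssim\|G\|_{L^p(X)} .
\]

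The key steps, in order, are:
\begin{enumerate}[label={\rm(\roman*)}, leftmargin=*]
\item reduce to the $N$-formulation on $\R_+$ via Lemma \ref{lem:gammaRbdd} and \eqref{eq:SMRgamma};
\item establish the $\gamma$-square function bounds for $A$ lifted to $L^p(\R_+;X)$ and $L^p(\R_+;\gamma(\R_+;X))$, using the $H^\infty$-calculus;
\item factor $N$ as (upper square function) $\circ$ (diagonal operator with entries $N_{k_\lambda}$) $\circ$ (lower square function);
\item apply the $R$-boundedness from $(S_p^{\exp})$, through the $\gamma$-multiplier theorem (Kalton--Weis), to bound the middle factor on $\gamma(\R_+,\frac{du}{u};L^p(\gamma))$;
\item extend from the dense class by closedness of $A^{1/2}$.
\end{enumerate}

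I expect the main obstacle to be step (iv), because it requires the correct placement of the $\gamma$-norm in $u$ relative to the $L^p$ and $\gamma(\R_+)$ norms in $t$ and $s$. This needs the $\gamma$-Fubini identification $L^p(\gamma(\R_+,\frac{du}{u};\cdot))\simeq\gamma(\R_+,\frac{du}{u};L^p(\cdot))$, valid since $p<\infty$. It also requires verifying that the kernel family indexed by $u$ and the contour parameter lies in the absolutely convex hull of $\{k_\lambda:\lambda\in\Sigma_\nu\}$ with uniformly bounded weights. My first attempt would be to take $\psi_1=\psi_2$ to be a power of $z(1+z)^{-2}$, so that the $\zeta$-integral is absolutely convergent with weight $|\zeta|^{-1}\min(|u\zeta|,|u\zeta|^{-1})^{\epsilon}$, which integrates to a constant in $\frac{|d\zeta|}{|\zeta|}$.
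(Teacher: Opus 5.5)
There is a genuine gap in steps (iii)--(iv). The middle factor is not a scalar (diagonal) superposition of the operators $N_{k_\lambda}$, so $(S_p^{\exp})$ cannot be fed into the $\gamma$-multiplier theorem as you describe. The Cauchy integral does not remove the resolvent. At scale $u$ you get
\[
\psi_1(uA)A^{1/2}S(\tau)\psi_2(uA)=\frac{1}{2\pi i}\int_{\Gamma_{\nu'}}\psi_1(u\zeta)\psi_2(u\zeta)\,k_\zeta(\tau)\,R(\zeta,A)\dd\zeta ,
\]
so the coefficients of $k_\zeta$ are the operators $R(\zeta,A)$, not scalars. Your weight $|\zeta|^{-1}\min(|u\zeta|,|u\zeta|^{-1})^{\epsilon}$ only records the norm bound $\|R(\zeta,A)\|\lesssim|\zeta|^{-1}$. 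The hull argument and the $\gamma$-multiplier theorem need an $R$-bound instead.

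Nor can the resolvent be traded for scalars. Consider an identity $\lambda^{1/2}e^{-\tau\lambda}=\int_0^\infty\Psi(u\lambda)u^{-1/2}\phi(\tau/u)\frac{\dd u}{u}$ with $\Psi=\psi_1\psi_2$ and $\phi=\int k_\zeta\dd\mu(\zeta)$ in the absolutely convex hull of the exponential kernels. Substituting $u=w\zeta/\lambda$ and taking Mellin transforms in $r=\tau\lambda$ forces the function $w\mapsto\int\Psi(\zeta w)\dd\mu(\zeta)\in L^1(\R_+,\frac{\dd w}{w})$ to have Mellin transform identically $1$ on the imaginary axis. This contradicts the Riemann--Lebesgue lemma, so no such identity exists.

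Scalar identities with kernels merely \emph{dominated} by exponentials do exist. For instance, by the fundamental theorem of calculus, $\lambda^{1/2}e^{-\tau\lambda}=2\sqrt2\int_0^\infty h_{2u}(\tau)(u\lambda)^{3/2}e^{-2u\lambda}\frac{\dd u}{u}$. However, transferring $R$-bounds from $\{N_{k_\lambda}\}$ to such dominated kernels is exactly the nontrivial direction of Proposition \ref{prop:HpEp}, which needs the triangular contraction property. In either reading, the geometric input that makes the middle factor $R$-bounded never appears in your outline.

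What is missing is $R$-sectoriality of $A$, i.e.\ $R$-boundedness of $\{\zeta R(\zeta,A):\zeta\in\Gamma_{\nu'}\}$. This does not follow from a bounded $H^\infty$-calculus alone (see Theorem \ref{thm:Xexample}). It does hold here, with $\omega_R(A)\le\omega_{H^\infty}(A)$, because UMD spaces have the triangular contraction property \cite[Theorem 10.3.4(2)]{HNVW2}.

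With this ingredient your strategy closes. Write $\tilde A$ and $\hat A$ for the pointwise lifts of $A$ to $L^p(\R_+;X)$ and $L^p(\R_+;\gamma(\R_+;X))$. The operators $M(u):=N\psi(u\tilde A)=\frac{1}{2\pi i}\int_{\Gamma_{\nu'}}N_{k_\zeta}\,[\zeta R(\zeta,\tilde A)]\,\psi(u\zeta)\frac{\dd\zeta}{\zeta}$ lie in a fixed multiple of the closed absolutely convex hull of the $R$-bounded product family $\{N_{k_\zeta}\,\zeta R(\zeta,\tilde A)\}$. Normalising $\int_0^\infty\psi(t)^3\frac{\dd t}{t}=1$, one has $\langle NG,H\rangle=\int_0^\infty\langle M(u)\psi(u\tilde A)G,\psi(u\hat A)^*H\rangle\frac{\dd u}{u}$. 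The $\gamma$-multiplier theorem together with upper square function estimates for $\tilde A$ and $\hat A^*$ then gives the bound.

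Alternatively, the interval-kernel identity above gives an honest scalar sandwich with middle factors $N_{h_{2u}}$. That route works once $(S_p)$ is obtained from Proposition \ref{prop:HpEp}, whose proof does not use the present theorem.

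The proof the paper cites \cite{NVW-SMR,NVW-survey} packages the first route into the Kalton--Weis operator-valued $H^\infty$-calculus \cite{KWcalc}. On spaces with the triangular contraction property, a bounded analytic function $F$ on $\Sigma_\nu$, $\nu>\omega_{H^\infty}(A)$, with $R$-bounded range commuting with the resolvents yields a bounded $F(A)$. Applied to $F(\lambda)=S_{k_\lambda}$ on $L^p_{\mathscr F}(\Omega\times\R_+;X)$, this gives $F(A)G=A^{1/2}U_G$ directly.
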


By a convexity argument one can show that the condition $(S_p)$ implies that a large family of stochastic convolutions becomes $R$-bounded (see \cite[Proposition 3.2]{NVW-SMR}). In particular, this includes the exponential ones, and therefore $(S_p)$ implies $(S_p^{\exp})$. In the next subsection we will prove the converse result.

\subsection{Equivalence of \texorpdfstring{$(S_p)$ and $(S_p^{\exp})$}{Sp and Spexp} and the triangular contraction property}\label{ss:triangular}

To compare interval kernels with exponential kernels, we need to multiply Rademacher sums by scalar coefficient matrices depending on two indices, for which we introduce some terminology. The space $X$ is said to have the {\em triangular contraction property} if there is a constant $C_\Delta$ such that for all finite families
$(x_{ij})_{i,j=1}^n\subseteq X$,
\[
    \Big(\E \wt{\E}\Big\| \sum_{1\leq i\leq j\leq n}\varepsilon_i\wt{\varepsilon}_j x_{ij}\Big\|_X^2\Big)^{1/2}\leq C_\Delta \Big(\E\wt{\E}\Big\|           \sum_{i,j=1}^n\varepsilon_i\wt{\varepsilon}_j x_{ij}\Big\|_X^2 \Big)^{1/2}, 
\]
and {\em Pisier's contraction property} if there is a constant $C$ such that for all scalars $(\alpha_{ij})_{i,j=1}^n$ and $(x_{ij})_{i,j=1}^n\subseteq X$
\[
    \Big(\E\wt{\E}\Big\|\sum_{i,j=1}^n \alpha_{ij} \varepsilon_i\wt{\varepsilon}_j x_{ij} \Big\|_X^2\Big)^{1/2}\leq C \sup_{1\leq i,j\leq n} |\alpha_{ij}|\Big(\E\wt{\E}\Big\|            \sum_{i,j=1}^n\varepsilon_i\wt{\varepsilon}_j x_{ij}\Big\|_X^2\Big)^{1/2}.
\]
In the literature this is also referred to as property $(\alpha)$. Unfortunately not all UMD spaces have Pisier's contraction property. For instance, the Schatten class operators $\mathcal{C}^p$ for $p\in (1, \infty)\setminus \{2\}$ give a counterexample (see \cite[Proposition 7.5.6]{HNVW2}). On the other hand, all UMD spaces have the triangular contraction property (see \cite[Theorem 5.6.9]{HNVW2}). In Banach function spaces Pisier's contraction property and the triangular contraction property are equivalent to finite cotype (see \cite[Theorem 7.5.20]{HNVW2}).

The aim of this section is to provide conditions on the coefficients $(\alpha_{ij})$ under which the weaker triangular contraction property suffices. It is well-known that certain Fourier multiplier operators emerging from multipliers of bounded variation form an $R$-bounded set \cite[Theorem 8.3.4]{HNVW2}. The following can be seen as an analogous result for pointwise multiplication on $\Rad^2(X)$.

\begin{lemma}[Bounded variation coefficients and triangular contractions]
\label{lem:BV-triangular-contraction}
Let $X$ be a Banach space with the triangular contraction property.
Let $M,N\geq1$ and let $(\alpha_{ij})_{i,j= 1}^{M,N}$ be a
scalar matrix. Suppose that its columns have uniformly bounded variation, i.e.
\[
    B:=\sup_{1\leq j\leq N}\Bigl(|\alpha_{Mj}|+\sum_{i=1}^{M-1}|\alpha_{ij}-\alpha_{i+1,j}|\Bigr) <\infty .
\]
Then for all $(x_{ij})_{i,j=1}^{M,N}\subseteq X$,
\[
    \Big(\E\wt{\E} \Big\| \sum_{i=1}^M\sum_{j=1}^N          \varepsilon_i\wt{\varepsilon}_j \alpha_{ij}x_{ij} \Big\|_X^2 \Big)^{1/2} \lesssim    C_\Delta B \Big( \E\wt{\E} \Big\|\sum_{i=1}^M\sum_{j=1}^N \varepsilon_i\wt{\varepsilon}_j x_{ij} \Big\|_X^2
    \Big)^{1/2}.
\]
The implicit constant is independent of $M,N$, the coefficients
 $(\alpha_{ij})_{i,j= 1}^{M,N}$, and the vectors $(x_{ij})_{i,j=1}^{M,N}$.
\end{lemma}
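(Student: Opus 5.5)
Abel summation in the row index $i$ (a summation by parts in $i$ for each fixed column $j$) reduces the bounded-variation multiplier to a convex combination of ``tail'' truncations, and each tail truncation will be handled by the triangular contraction property. For $1\le k\le M$ set $\beta_{kj}:=\alpha_{kj}-\alpha_{k+1,j}$ for $k<M$ and $\beta_{Mj}:=\alpha_{Mj}$. Then
\[
\alpha_{ij}=\sum_{k=i}^{M}\beta_{kj},\qquad \sum_{k=1}^M|\beta_{kj}|\le B\ \text{ for every } j,
\]
and therefore
\[
\sum_{i,j}\varepsilon_i\wt\varepsilon_j\alpha_{ij}x_{ij}=\sum_{j=1}^N\wt\varepsilon_j\sum_{k=1}^M\beta_{kj}\,\sum_{i\le k}\varepsilon_ix_{ij}.
\]
The difficulty is that the weights $\beta_{kj}$ depend on $j$, so we cannot directly pull out a convex combination over $k$ uniformly in $j$.

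To handle this, I would introduce an auxiliary Rademacher sequence indexed by $k$ and use the contraction principle for the $\wt\varepsilon$ sum in $\Rad(\Rad(X))$. Define vectors $z_{kj}:=\sum_{i\le k}\varepsilon_i x_{ij}$, regarded as elements of $L^2(\Omega;X)$. The expression above equals $\sum_j\wt\varepsilon_j\sum_k\beta_{kj}z_{kj}$. The key claim is that
\[
\Big\|\sum_j\wt\varepsilon_j\sum_k\beta_{kj}z_{kj}\Big\|\lesssim B\,\Big\|\sum_{k}\sum_j\varepsilon'_k\wt\varepsilon_j\,\1_{\{i\le k\}}\text{-truncated sums}\Big\|.
\]
Rather than prove this directly, I would instead use a cleaner route. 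Enlarge the index set and define a block-lower-triangular array on a doubled index grid, so that the weighted sum becomes a triangular truncation of a Rademacher double sum whose full version is controlled.

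Concretely, I would aim to realise, for each fixed $j$, the operator $\sum_i\varepsilon_ix_{ij}\mapsto\sum_i\alpha_{ij}\varepsilon_ix_{ij}$ as an average over $k$ with weights $|\beta_{kj}|/B$ (times unimodular phases absorbed into $x$) of the partial-sum projections $P_k$. By Kahane's contraction principle in $\wt\varepsilon$ and convexity (Jensen) over a random choice of $k_j$ with law $|\beta_{kj}|/B$, independent across $j$, it then suffices to bound
\[
\E_{\kappa}\Big\|\sum_j\wt\varepsilon_j\sum_{i\le \kappa_j}\varepsilon_ix_{ij}\Big\|_{L^2(\Omega\times\wt\Omega;X)}
\]
uniformly over all choices of cut-offs $\kappa_j\in\{1,\dots,M\}$. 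The phases must be handled with care: splitting the real and imaginary parts and the signs of $\beta$ into four pieces only costs a constant. It is convenient that Steinhaus variables are used throughout.

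The main obstacle is the bound for arbitrary cut-offs $\kappa_j$, since the set $\{(i,j):i\le\kappa_j\}$ is a ``staircase'' rather than a triangle. My plan is to reorder the columns $j$ so that $\kappa_j$ is nondecreasing; the double Rademacher sum is invariant in distribution under permuting the $\wt\varepsilon_j$. After reordering, the staircase can be embedded in a triangle of a larger array. Let $y_{i\ell}:=x_{i,j}$ when $\ell$ is placed according to the position of $\kappa_j$. Concretely, relabel rows and columns into a common ordered index set in which row $i$ precedes column $j$ exactly when $i\le\kappa_j$, using an interleaving of the two ordered lists. In this set the staircase becomes exactly $\{i\preceq j\}$, and the triangular contraction property, applied with index set of size $M+N$ and zero entries elsewhere, gives the bound with constant $C_\Delta$. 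Multiplying by the factor $B$ from the convex decomposition yields the lemma.
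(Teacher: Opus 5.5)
Your proposal is correct and has the same two-stage architecture as the paper: reduce to staircase truncations $\1_{\{i\le k_j\}}$, then control those by the triangular contraction property. Both stages, however, are carried out differently.

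\emph{Reduction to staircases.} The paper treats non-negative decreasing columns by the layer-cake formula $\alpha_{ij}=\int_0^B\1_{\{\tau<\alpha_{ij}\}}\dd\tau$. This is a family of cut-offs $k_j(\tau)$ all driven by one level $\tau$, and it needs no contraction principle. General coefficients are then reached by splitting off a constant column term (handled by Kahane contraction in $\wt\varepsilon$), writing the rest as two monotone parts, and finally taking real and imaginary parts.

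Your Abel summation handles complex coefficients in one step. You pick independent random cut-offs $\kappa_j$ with law $|\beta_{kj}|/w_j$, where $w_j:=\sum_k|\beta_{kj}|\le B$. You then use the triangle inequality in $L^2(\Omega\times\wt\Omega;X)$ and one application of Kahane's contraction principle to the factors $w_j\beta_{\kappa_j j}/|\beta_{\kappa_j j}|$. One precision point: these phases depend on $k$, so they cannot be ``absorbed into $x$'' before $\kappa$ is fixed. Only for fixed $\kappa$ do they become column scalars, which is exactly where the contraction principle applies. The price of your route is that the contraction principle enters every term.

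\emph{Staircase bound.} The paper groups columns with equal cut-off into $y_{ik}=\sum_{j:k_j=k}\wt\varepsilon_jx_{ij}$. It uses an auxiliary Rademacher sequence $(\eta_k)$ and the invariance $(\eta_{k_j}\wt\varepsilon_j)_j\sim(\wt\varepsilon_j)_j$ to reduce to a genuine triangle in $(i,k)$. You instead interleave row and column labels into one ordered set of size $M+N$, inserting column $j$ right after row $\kappa_j$, and zero-pad the array. This turns the staircase into an exact triangle with zero diagonal, so the triangular contraction property gives the constant $C_\Delta$ directly, without the auxiliary sequence or any conditioning. The preliminary column reordering is automatic from this insertion.

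The exploratory sentences about the ``key claim'' and the ``doubled index grid'' are superseded by your concrete plan and can be dropped.
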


\begin{proof}
We first claim that for  $k_1,\ldots,k_N\in\{0,1,\ldots,M\}$ we have 
\begin{equation}\label{eq:firstclaimtriangular}
       \Big(\E\wt{\E}\Big\|\sum_{i=1}^M\sum_{j=1}^N\varepsilon_i\wt{\varepsilon}_j\mathbf \1_{\{i\leq k_j\}}x_{ij}\Big\|_X^2\Big)^{1/2}
    \leq C_\Delta\Big(\E\wt{\E}\Big\| \sum_{i=1}^M\sum_{j=1}^N\varepsilon_i\wt{\varepsilon}_jx_{ij}\Big\|_X^2\Big)^{1/2}. 
\end{equation}
Indeed, introduce an independent Rademacher sequence
$(\eta_k)_{k=0}^M$. Conditionally on $(\eta_k)_{k=0}^M$, the random variables $(\eta_{k_j}\wt{\varepsilon}_j)_{j=1}^N$ have the same distribution as $(\wt{\varepsilon}_j)_{j=1}^N$. Therefore, defining
$y_{ik}:=\sum_{\{j:\ k_j=k\}}\wt{\varepsilon}_jx_{ij}$ for $1\leq i\leq M,$ $0\leq k\leq M$ and applying the triangular contraction property, we have
\begin{align*}
\Big(\E\wt{\E}\Big\|\sum_{i=1}^M\sum_{j=1}^N\varepsilon_i\wt{\varepsilon}_j\mathbf \1_{\{i\leq k_j\}}x_{ij}\Big\|_X^2\Big)^{1/2} &=\Big(\E\mathbb E_{\eta}\wt{\E}\Big\|\sum_{i=1}^M\sum_{j=1}^N           \varepsilon_i\eta_{k_j}\wt{\varepsilon}_j\mathbf \1_{\{i\leq k_j\}}x_{ij} \Big\|_X^2\Big)^{1/2}\\&= \Big(\wt{\E}\E\mathbb E_{\eta}\Big\|\sum_{i=1}^M\sum_{k=0}^M\varepsilon_i\eta_k\mathbf \1_{\{i\leq k\}}y_{ik}\Big\|_X^2\Big)^{1/2}\\&\leq C_\Delta
    \Big(\wt{\E}\E\mathbb E_{\eta} \Big\|\sum_{i=1}^M\sum_{k=0}^M\varepsilon_i\eta_k y_{ik}\Big\|_X^2\Big)^{1/2}.
\end{align*}
Since $(\eta_{k_j}\wt{\varepsilon}_j)_{j=1}^N$ again has the same distribution as
$(\wt{\varepsilon}_j)_{j=1}^N$, this is exactly the right-hand side of \eqref{eq:firstclaimtriangular}.

Next suppose that $\alpha_{1j}\geq \alpha_{2j}\geq\cdots\geq \alpha_{Mj}\geq 0$ for 
$1\leq j\leq N$, and that $\sup_{1\leq j\leq N} \alpha_{1j}\leq B$. Then
\[
    \alpha_{ij}=\int_0^B \mathbf \1_{\{\tau<\alpha_{ij}\}}\dd \tau.
\]
For each fixed $\tau \in (0,B)$, the monotonicity in $i$ implies that $\mathbf \1_{\{\tau<\alpha_{ij}\}}=\mathbf \1_{\{i\leq k_j(\tau)\}}$ for suitable integers $k_j(\tau)\in\{0,\ldots,M\}$. By \eqref{eq:firstclaimtriangular} and Minkowski's integral inequality,
\begin{align*}
    \Big(\E\wt{\E}\Big\|\sum_{i,j}\varepsilon_i\wt{\varepsilon}_j \alpha_{ij}x_{ij}\Big\|_X^2\Big)^{1/2}
    &\leq\int_0^B\Big(\E\wt{\E}\Big\|\sum_{i,j}\varepsilon_i\wt{\varepsilon}_j\mathbf \1_{\{\tau<\alpha_{ij}\}}x_{ij}\Big\|_X^2\Big)^{1/2}\dd \tau\\
    &\leq C_\Delta B\Big(\E\wt{\E}\Big\|\sum_{i,j}\varepsilon_i\wt{\varepsilon}_jx_{ij}\Big\|_X^2\Big)^{1/2}.
\end{align*}
The same estimate holds for non-negative increasing columns, by reversing the
order of the $i$-index.

Now consider real-valued coefficients. For each $j$ write    $\alpha_{ij}=\alpha_{Mj}+p_{ij}-q_{ij}$, where
\[
    p_{ij}:=\sum_{k=i}^{M-1}(\alpha_{kj}-\alpha_{k+1,j})_+,
    \qquad
    q_{ij}:=\sum_{k=i}^{M-1}(\alpha_{k+1,j}-\alpha_{kj})_+.
\]
Then $p_{\cdot j}$ and $q_{\cdot j}$ are non-negative decreasing sequences and
\[
    \sup_{1\leq j\leq N} p_{1j}+\sup_{1\leq j\leq N} q_{1j} \leq 2\sup_{1\leq j\leq N}\sum_{i=1}^{M-1}|\alpha_{ij}-\alpha_{i+1,j}|\leq 2B.
\]
The constant term $(\alpha_{Mj})_{j=1}^N$ is handled by Kahane's contraction principle in
the $\wt{\varepsilon}_j$-variable:
\[
    \Big(\E\wt{\E}\Big\|\sum_{i,j}\varepsilon_i\wt{\varepsilon}_j \alpha_{Mj}x_{ij}\Big\|_X^2\Big)^{1/2}\leq2B\Big(\E\wt{\E}\Big\|\sum_{i,j}\varepsilon_i\wt{\varepsilon}_jx_{ij}\Big\|_X^2\Big)^{1/2}.
\]
Applying the monotone case to $p$ and $q$ gives the desired estimate for real
coefficients. Finally, complex coefficients are treated by applying the real
case to their real and imaginary parts.
\end{proof}

\begin{corollary}\label{cor:BV-functions-triangular}
Let $X$ have the triangular contraction property. Let
$m_1,\ldots,m_N\colon\mathbb R_+\to\mathbb C$ be functions of bounded variation
such that
\[
    \sup_{1\leq j\leq N}
    \bigl(
        \|m_j\|_\infty+\operatorname{Var}(m_j)
    \bigr)
    \leq B .
\]
Then for all $u_1,\ldots,u_M\in\mathbb R_+$ and all
$(x_{ij})_{i,j=1}^{M,N}\subseteq X$,
\[
    \Big(
        \E\wt{\E}
        \Big\|
            \sum_{i=1}^M\sum_{j=1}^N
            \varepsilon_i\wt{\varepsilon}_j m_j(u_i)x_{ij}
        \Big\|_X^2
    \Big)^{1/2}
    \lesssim
    C_\Delta B
    \Big(
        \E\wt{\E}
        \Big\|
            \sum_{i=1}^M\sum_{j=1}^N
            \varepsilon_i\wt{\varepsilon}_jx_{ij}
        \Big\|_X^2
    \Big)^{1/2}.
\]
\end{corollary}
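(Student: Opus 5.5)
This is a direct reduction to Lemma \ref{lem:BV-triangular-contraction}. The plan is to reorder the points $u_i$ increasingly and then read off the bounded variation of each column.

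First we relabel. Since $(\varepsilon_i)_{i=1}^M$ is a Rademacher sequence, permuting the index $i$ does not change the distribution of the sums. Neither side of the estimate depends on the order of the $i$-index, so we may assume $u_1\le u_2\le\cdots\le u_M$; ties cause no problems.

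Next we set $\alpha_{ij}:=m_j(u_i)$ and check the hypothesis of Lemma \ref{lem:BV-triangular-contraction}. For every $j$ the ordering gives
\[
|\alpha_{Mj}|+\sum_{i=1}^{M-1}|m_j(u_i)-m_j(u_{i+1})|\le \|m_j\|_\infty+\operatorname{Var}(m_j)\le B.
\]
This holds because the sum over consecutive points of the partition $u_1\le\cdots\le u_M$ is bounded by the total variation of $m_j$ on $\R_+$. If some $u_i$ coincide, the corresponding terms simply vanish. Here $\operatorname{Var}$ is understood as the pointwise total variation, that is, the supremum over partitions, so no regularisation issue arises. For complex-valued $m_j$, the variation of $m_j$ bounds the sum of the moduli of its increments directly, so the same estimate applies.

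The lemma then applies verbatim with constant $C_\Delta B$, which is exactly the asserted estimate. The implicit constant is independent of $M$, $N$, the points $u_i$ and the vectors $x_{ij}$, since the lemma's constant is. I do not expect a real obstacle here. The only point needing care is the initial reordering: the variation bound is used along the sorted points, so the rows of the matrix must first be arranged so that the $u_i$ increase, and this arrangement is justified by the invariance of the Rademacher sequence under permutations.
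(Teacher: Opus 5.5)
Your proposal is correct and follows the paper's proof: the paper likewise permutes the $u_i$, the rows $(x_{ij})_{j=1}^N$ and the $\varepsilon_i$ simultaneously so that $u_1\le\cdots\le u_M$. It then applies Lemma~\ref{lem:BV-triangular-contraction} with $\alpha_{ij}:=m_j(u_i)$, bounding $|\alpha_{Mj}|$ by $\|m_j\|_\infty$ and the sum of consecutive increments by $\operatorname{Var}(m_j)$.
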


\begin{proof}
After simultaneously permuting the points $u_i$, the rows
$(x_{ij})_{j=1}^N$, and the corresponding Rademacher variables
$\varepsilon_i$, we may assume that
$u_1\leq\cdots\leq u_M$. Now the result follows from 
Lemma~\ref{lem:BV-triangular-contraction} with $\alpha_{ij}:=m_j(u_i)$, since
\[
    |\alpha_{Mj}|\leq\|m_j\|_\infty,    \qquad    \sum_{i=1}^{M-1}|\alpha_{ij}-\alpha_{i+1,j}|\leq \operatorname{Var}(m_j).
\]
This finishes the proof.
\end{proof}

We can now compare the two stochastic convolution conditions  $(S_p)$ and $(S_p^{\exp})$. As noted before, $(S_p)$ implies $(S_p^{\exp})$ by averaging or convexity. The converse is more delicate. We compare a normalized interval kernel with an exponential kernel of comparable scale, and show that the remaining factor is a bounded variation multiplier. The preceding corollary allows this multiplier to be handled using the triangular contraction property.

\begin{proposition}\label{prop:HpEp}
Let $X$ be a UMD Banach space with type $2$, and let $p\in  [2, \infty)$. Then the conditions
$(S_p)$ and $(S_p^{\exp})$ are equivalent.
\end{proposition}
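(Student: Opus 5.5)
The implication $(S_p)\Rightarrow(S_p^{\exp})$ has already been recorded, via the convexity argument of \cite[Proposition 3.2]{NVW-SMR}, so the plan concerns the converse. By Lemma \ref{lem:gammaRbdd} it suffices to show that $\{N_{h_\delta}:\delta>0\}$ is $R$-bounded from $L^p(\R_+;X)$ into $L^p(\R_+;\gamma(\R_+;X))$, assuming $R$-boundedness of $\{N_{k_\lambda}:\lambda>0\}$. For $\lambda=1/\delta$ we factor pointwise in $0<u=t-s$:
\[
h_\delta(u)=m(\lambda u)\,k_\lambda(u),\qquad m(v):=e^{v}\1_{(0,1)}(v).
\]
The function $m$ is bounded by $e$ and has variation at most $2e$ on $\R_+$. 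Hence for fixed $t$,
\[
(N_{h_\delta}G)(t)=\bigl[s\mapsto m(\lambda(t-s))\,(N_{k_\lambda}G)(t)(s)\bigr].
\]
So $N_{h_\delta}$ is obtained from $N_{k_\lambda}$ by a pointwise multiplier in the $\gamma(\R_+)$-variable $s$, with bounded variation in $s$ uniformly in $\delta$ and $t$.

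The key step is to show that such a family of multipliers, indexed by $j$ through $\lambda_j$, acts $R$-boundedly on $\gamma$-sums. Fix $t$, $\delta_1,\dots,\delta_N$ and $G_1,\dots,G_N$, and put $\Phi_j:=(N_{k_{\lambda_j}}G_j)(t)\in\gamma(\R_+;X)$. By density, reduce to step functions: all $\Phi_j$ are constant on the cells of a common fine partition $\{I_i\}$ of $(0,t)$, with $\Phi_j=\sum_i \1_{I_i}\otimes x_{ij}$ up to normalisation. Then
\[
\Bigl\|\sum_j\wt\varepsilon_j\Phi_j\Bigr\|_{L^2(\wt\Omega;\gamma)}\eqsim\Bigl(\E\wt\E\Bigl\|\sum_{i,j}\gamma_i\wt\varepsilon_j|I_i|^{1/2}x_{ij}\Bigr\|^2\Bigr)^{1/2}.
\]
Multiplying by $m(\lambda_j(t-\cdot))$ and replacing it by its value at a sample point $u_i\in I_i$ gives coefficients $\alpha_{ij}=m_j(u_i)$ with $m_j(u):=m(\lambda_j(t-u))$. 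These satisfy $\|m_j\|_\infty+\operatorname{Var}(m_j)\le 3e$.

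Corollary \ref{cor:BV-functions-triangular} then applies, since UMD spaces have the triangular contraction property, and the Gaussians $\gamma_i$ may be replaced by Rademachers because $X$ has finite cotype. The discretisation error tends to $0$ as the partition is refined, because the $\Phi_j$ are fixed and $m_j$ is piecewise continuous with only finitely many jumps. Taking $L^p(\R_+)$-norms in $t$ and using Fubini together with Kahane--Khintchine to exchange the Rademacher average with the $L^p$-norm yields
\[
\Bigl\|\sum_j\wt\varepsilon_jN_{h_{\delta_j}}G_j\Bigr\|\lesssim C_\Delta\Bigl\|\sum_j\wt\varepsilon_jN_{k_{\lambda_j}}G_j\Bigr\|\lesssim\Bigl\|\sum_j\wt\varepsilon_jG_j\Bigr\|,
\]
which is the required $R$-bound.

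The main obstacle is the approximation step. The coefficients must be genuinely evaluations $m_j(u_i)$ on a common partition, and $\Phi_j$ is not itself a step function. I would handle this by proving the multiplier estimate first for step $\Phi_j$ with the partition refined to include the jump points $t-\delta_j$, so that $m_j$ is continuous on each cell. The estimate then extends to general $\Phi_j$ by density in $\gamma$ together with uniform boundedness of the multipliers; for fixed $j$ these are bounded by the ideal property of $\gamma$-radonifying operators.
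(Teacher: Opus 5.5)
Your proof is correct. Its overall architecture matches the paper's: reduce to the operators $N_k$ via Lemma \ref{lem:gammaRbdd}, factor $h_\delta$ as a bounded-variation multiplier times an exponential kernel, and control the multipliers with Corollary \ref{cor:BV-functions-triangular}. Two steps are handled differently.

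\emph{Choice of exponential family.} You use the whole family $\{k_\lambda:\lambda>0\}$, which $(S_p^{\exp})$ does give you, and match scales exactly via $\lambda=\delta^{-1}$, so the single multiplier $m(v)=e^{v}\1_{(0,1)}(v)$ suffices. The paper instead assumes $R$-boundedness only along a sequence $(\lambda_n)$ with $\lambda_{n+1}\le C_0\lambda_n$. This forces its dilation step, the selection of comparable scales with $\theta_j\in(C_0^{-1},1]$, and the rescaled multipliers $\widetilde m_j$. That stronger form is exactly what is invoked later in Theorem \ref{thm:SMRRad} with $\lambda_n=2^n$. So your argument proves the proposition as stated, but not the variant the paper relies on downstream. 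Combined with the paper's dilation and scale-selection step, your argument would go through unchanged with $\widetilde m_j$ in place of $m(\lambda_j(t-\cdot))$, since these are still uniformly bounded, of uniformly bounded variation, and piecewise continuous.

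\emph{Comparison of $\gamma$-norms.} The paper packages the corollary as $R$-boundedness of the diagonal operators $\{D(u):u>0\}$ on $\Rad_N^p(X)$, and then quotes the $\gamma$-multiplier theorem on $\gamma(0,t;\Rad_N^p(X))$. You essentially reprove that theorem in this concrete setting:
\begin{itemize}
\item work on common partitions that contain the jump points $t-\delta_j$;
\item discretise by sample points, with error at most $\|m_j-m_j^{(k)}\|_\infty\|\Psi_j\|_{\gamma}$ by the ideal property;
\item apply the corollary to Gaussian--Rademacher double sums;
\item extend by density.
\end{itemize}
This is more self-contained and makes explicit where the triangular contraction property enters. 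The paper's route is shorter given the available $\gamma$-multiplier machinery.
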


\begin{proof}
For $\lambda>0$, one has
\[
        \int_0^\infty \sqrt{t}\,|k_\lambda'(t)|\dd t
        =\lambda^{3/2}\int_0^\infty \sqrt{t}\,e^{-\lambda t}\dd t
        =\Gamma\big(\tfrac32\big).
\]
Thus the convexity argument of \cite[Proposition 3.2]{NVW-SMR} shows that $(S_p)$ implies the $R$-boundedness of $\{S_{k_\lambda}:\lambda>0\}$. The modulation argument preceding Lemma \ref{lem:gammaRbdd} then gives $(S_p^{\exp})$.

We prove the converse in the following slightly stronger form. Let $C_0> 1$ and
$(\lambda_n)_{n\geq1}$ be an increasing sequence in $(0,\infty)$ such that
\[
     \lambda_n \to \infty,\qquad    \lambda_{n+1}\leq C_0\lambda_n,
        \qquad n\geq1,
\]
and suppose that the family $\{S_{k_{\lambda_n}}:n\geq1\}$ is $R$-bounded on
$L^p_{\mathscr F}(\Omega\times\R_+;X)$ with bound $C_{\exp}$. We prove that $X$ satisfies $(S_p)$ in five steps.

\smallskip
\emph{Step 1: Common dilation and selection of comparable scales.}
By Lemma \ref{lem:gammaRbdd}, the family $\{N_{k_{\lambda_n}}:n\geq1\}$
is $R$-bounded from $L^p(\R_+;X)$ into
$L^p(\R_+;\gamma(\R_+;X))$. For $a>0$, we have
\[
        k_{a\lambda_n}(u)=a^{1/2}k_{\lambda_n}(au).
\]
The dilation argument in the proof of Lemma \ref{lemma:timeintervalSp}, now applied on $\R_+$, therefore shows that
$\{N_{k_{a\lambda_n}}:n\geq1\}$ is $R$-bounded with a bound independent of $a>0$.

Let $N\geq1$, let $\delta_1,\ldots,\delta_N>0$, and let
$G_1,\ldots,G_N\in L^p(\R_+;X)$. Set
\[
        a:=\frac{1}{\lambda_1\max_{1\leq j\leq N}\delta_j}.
\]
For each $1\leq j\leq N$, by the definition of $a$ we can choose $n_j\geq1$ such that
$a\lambda_{n_j}\leq\delta_j^{-1}       <a\lambda_{n_j+1},$
and put
\(\theta_j:=a\lambda_{n_j}\delta_j.\)
It follows that $C_0^{-1}<\theta_j\leq1.$

\smallskip
\emph{Step 2: Factorisation by bounded-variation multipliers.}
For $u>0$, we have
\[
        h_{\delta_j}(u)=\widetilde m_j(u)k_{a\lambda_{n_j}}(u),
\]
where
\[
        \widetilde m_j(u)
        :=\theta_j^{-1/2}e^{a\lambda_{n_j} u}\1_{(0,\delta_j)}(u).
\]
Since $C_0^{-1}<\theta_j\leq1$ and
$a\lambda_{n_j}\delta_j=\theta_j$, it follows that
$\|\widetilde m_j\|_\infty\leq eC_0^{1/2}$. Moreover,
$\widetilde m_j$ is increasing on $(0,\delta_j)$ and vanishes on
$(\delta_j,\infty)$, and therefore
\[
        \operatorname{Var}(\widetilde m_j)
        \leq2eC_0^{1/2}.
\]
Thus, 
\begin{equation}\label{eq:varestequiv}
        \sup_{1\leq j\leq N}
        \bigl(
        \|\widetilde m_j\|_\infty
        +\operatorname{Var}(\widetilde m_j)
        \bigr)
        \leq3eC_0^{1/2}.
\end{equation}

\smallskip
\emph{Step 3: $R$-boundedness of the diagonal multipliers.}
Let $(\varepsilon_j)_{j=1}^N$ be a Rademacher sequence and put
\[
        Y_N:=\Rad_N^p(X)=\Big\{\sum_{j=1}^N\varepsilon_jx_j:\ x_j\in X \Big\}\subseteq L^p(\Omega_\varepsilon;X).
\]
For $u>0$, define $D(u)\in\mathscr L(Y_N)$ by
\[
        D(u)\Big(\sum_{j=1}^N\varepsilon_jx_j\Big):=\sum_{j=1}^N\varepsilon_j\widetilde m_j(u)x_j.
\]
Since $X$ is UMD, it has the triangular contraction property. By
Corollary \ref{cor:BV-functions-triangular} and \eqref{eq:varestequiv}, the family
\[
        \mathscr D:=\{D(u):u>0\}
\]
is $R$-bounded on $Y_N$, with an $R$-bound depending only on
$X$, $p$, and $C_0$. In particular, this bound is independent of
$N$ and  the numbers $\delta_j$.
Indeed, let $u_1,\ldots,u_M>0$ and write
\[
        y_i=\sum_{j=1}^N\varepsilon_jx_{ij}\in Y_N.
\]
By Kahane's inequalities and Fubini's theorem,
\[
\begin{aligned}
\Big(
\E_{\wt\varepsilon}
\Big\|
\sum_{i=1}^M\wt\varepsilon_iD(u_i)y_i
\Big\|_{Y_N}^2
\Big)^{1/2}
&\eqsim_p
\Big(
\E_{\wt\varepsilon}\E_\varepsilon
\Big\|
\sum_{i=1}^M\sum_{j=1}^N
\wt\varepsilon_i\varepsilon_j
\widetilde m_j(u_i)x_{ij}
\Big\|_X^2
\Big)^{1/2}.
\end{aligned}
\]
Corollary \ref{cor:BV-functions-triangular} and another application of
Kahane's inequalities now give the required $R$-boundedness of
$\{D(u):u>0\}$ on $Y_N$, with constants independent of $N$.

\smallskip
\emph{Step 4: Comparison of the corresponding $\gamma$-radonifying norms.}
For $t>0$, define
\begin{align*}
        \Phi_h^t(s)
        &:=
        \sum_{j=1}^N
        \varepsilon_jh_{\delta_j}(t-s)G_j(s),
        &&s\in(0,t),\\
        \Phi_k^t(s)
        &:=
        \sum_{j=1}^N
        \varepsilon_jk_{a\lambda_{n_j}}(t-s)G_j(s),
        &&s\in(0,t).
\end{align*}
The factorisation obtained in Step 2 gives
\[
        \Phi_h^t(s)=D(t-s)\Phi_k^t(s),
        \qquad 0<s<t.
\]
The $\gamma$-multiplier theorem \cite[Theorem 9.5.1]{HNVW2} therefore implies that
\begin{equation}\label{eq:step4est}
        \|\Phi_h^t\|_{\gamma(0,t;Y_N)}
        \lesssim_{p,X,C_0}
        \|\Phi_k^t\|_{\gamma(0,t;Y_N)},
        \qquad t>0.
\end{equation}
Here and below, the implicit constants are independent of
$N$, the numbers $\delta_j$, and the functions $G_j$.

\smallskip
\emph{Step 5: Conclusion.}
By the $\gamma$-Fubini isomorphism, \eqref{eq:step4est}, and the $\gamma$-Fubini isomorphism once more, we obtain
\begin{align*}
&\Big\|
\sum_{j=1}^N\varepsilon_jN_{h_{\delta_j}}G_j
\Big\|_{L^p(\Omega_\varepsilon;
L^p(\R_+;\gamma(\R_+;X)))}\lesssim_{p,X,C_0}
\Big\|
\sum_{j=1}^N\varepsilon_jN_{k_{a\lambda_{n_j}}}G_j
\Big\|_{L^p(\Omega_\varepsilon;
L^p(\R_+;\gamma(\R_+;X)))}.
\end{align*}
The $R$-boundedness obtained in Step 1 and Kahane's inequalities give
\[
\Big\|
\sum_{j=1}^N\varepsilon_jN_{h_{\delta_j}}G_j
\Big\|_{L^p(\Omega_\varepsilon;
L^p(\R_+;\gamma(\R_+;X)))}
\lesssim_{p,X,C_0} C_{\exp}\,
\Big\|
\sum_{j=1}^N\varepsilon_jG_j
\Big\|_{L^p(\Omega_\varepsilon;L^p(\R_+;X))}.
\]
Another application of Kahane's inequalities shows that
$\{N_{h_\delta}:\delta>0\}$ is $R$-bounded. By Lemma
\ref{lem:gammaRbdd}, $X$ satisfies $(S_p)$.
\end{proof}

\begin{corollary}[Extrapolation of $(S_p)$]
\label{cor:Sp-independent-p}
Let $X$ be a UMD Banach space with type $2$. If $X$ satisfies $(S_p)$ for some $p\in[2,\infty)$, then it satisfies $(S_q)$ for all $q\in(2,\infty)$. In particular, the conditions $(S_q)$, $q\in(2,\infty)$, are equivalent.
\end{corollary}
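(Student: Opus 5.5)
The plan is to route everything through Proposition \ref{prop:HpEp}, which lets us pass freely between $(S_p)$ and $(S_p^{\exp})$ for every $p\in[2,\infty)$. The independence of $p\in(2,\infty)$ for the exponential condition is the result of \cite{LoVer} recalled in the introduction. So the argument has three stages: descend from $(S_p)$ to $(S_p^{\exp})$, move the exponent, then return to $(S_q)$.

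\emph{Stage 1.} Assume $X$ satisfies $(S_p)$ for some $p\in[2,\infty)$. By Proposition \ref{prop:HpEp}, used in the direction $(S_p)\Rightarrow(S_p^{\exp})$ (the convexity argument), $X$ satisfies $(S_p^{\exp})$.

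\emph{Stage 2.} Transfer $(S_p^{\exp})$ to $(S_q^{\exp})$ for all $q\in(2,\infty)$ by invoking \cite{LoVer}. If $p\in(2,\infty)$, this is just the stated independence of $p$ for $(S_p^{\exp})$.

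The endpoint $p=2$ is the main point to check, since \cite{LoVer} is quoted only for $p\in(2,\infty)$. I see two possible routes.
\begin{enumerate}[label={\rm(\alph*)}, leftmargin=*]
\item Check whether the extrapolation in \cite{LoVer} also allows the starting exponent $2$. Extrapolation results of this type usually go from one exponent in $[2,\infty)$ to all of $(2,\infty)$.
\item Alternatively, use Theorem \ref{thm:HScharacS2}, proved later in the paper: $(S_2)$ forces $X$ to be isomorphic to a Hilbert space. On a Hilbert space $H$, $(S_q)$ holds for every $q$. Indeed, $\gamma(\R_+;H)=L^2(\R_+;H)$ with equivalent norms, and $R$-boundedness reduces to uniform boundedness. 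The family $\{N_{h_\delta}\}$ is then uniformly bounded from $L^q(\R_+;H)$ into $L^q(\R_+;L^2(\R_+;H))$ by Young's inequality in the $L^{q/2}$ norm of $\|G\|^2$. Lemma \ref{lem:gammaRbdd} then gives $(S_q)$.
\end{enumerate}
If the present corollary is meant to precede that theorem logically, I would instead rely on route (a). The worry there is that the argument in \cite{LoVer} may be formulated via stochastic maximal regularity of a suitable operator, so I would check that it applies with $p=2$ as the starting exponent.

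\emph{Stage 3.} Apply Proposition \ref{prop:HpEp} again, now in the converse direction $(S_q^{\exp})\Rightarrow(S_q)$, to obtain $(S_q)$ for each $q\in(2,\infty)$.

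The final claim, that the conditions $(S_q)$ for $q\in(2,\infty)$ are mutually equivalent, is then immediate: given $(S_{q_1})$ with $q_1\in(2,\infty)$, the statement just proved yields $(S_{q_2})$ for every $q_2\in(2,\infty)$.
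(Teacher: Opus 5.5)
Your Stages 1--3 with route (a) are exactly the paper's proof. It applies Proposition \ref{prop:HpEp}, then the extrapolation theorem \cite[Theorem 9.1]{LoVer}, then Proposition \ref{prop:HpEp} again. The paper invokes \cite[Theorem 9.1]{LoVer} directly for every starting exponent $p\in[2,\infty)$, so the case $p=2$ needs no separate treatment.

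Route (b) would not be circular, since the proof of Theorem \ref{thm:HScharacS2} does not use this corollary. As written, however, it contains a false step: for $q>2$ you cannot reduce $R$-boundedness to uniform boundedness.

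\begin{itemize}
\item That reduction needs the domain to have cotype $2$ (and the target type $2$). The space $L^q(\R_+;H)$ has only cotype $q$.
\item Testing on rank-one operators $x\mapsto\langle x,x_j^*\rangle y$, with $\langle x_j,x_j^*\rangle=\|x_j\|$, shows that if every bounded family were $R$-bounded, the domain would have cotype $2$.
\item The Hilbert-space argument in the proof of Theorem \ref{thm:HScharacS2} works only because $p=2$ there, so that both domain and codomain are Hilbertian.
\end{itemize}

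Your conclusion that Hilbert spaces satisfy $(S_q)$ is still true, but it needs a genuine argument. Put $g_j:=\|G_j\|_H^2$ and $A_\delta g(t):=\delta^{-1}\int_{(t-\delta,t)\cap\R_+}g(s)\dd s$. By Kahane--Khintchine and orthogonality in $H$, the $R$-bound estimate for $\{N_{h_\delta}:\delta>0\}$ reduces to
\[
\Big\|\sum_j A_{\delta_j}g_j\Big\|_{L^{q/2}(\R_+)}\lesssim\Big\|\sum_j g_j\Big\|_{L^{q/2}(\R_+)}.
\]
This follows by duality, using the pointwise bound $A_\delta^*h\le\mathcal M h$ for $h\geq0$ together with boundedness of the Hardy--Littlewood maximal operator $\mathcal M$ on $L^{(q/2)'}$. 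This last step is exactly where $q>2$ is needed.
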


\begin{proof}
By Proposition \ref{prop:HpEp}, condition $(S_p)$ implies $(S_p^{\exp})$. The extrapolation theorem \cite[Theorem 9.1]{LoVer} then gives $(S_q^{\exp})$ for every $q\in(2,\infty)$. A second application of Proposition \ref{prop:HpEp} gives $(S_q)$.
\end{proof}

\subsection{The case \texorpdfstring{$p=2$}{p=2}}\label{ss:casep2}
Stochastic maximal $L^2$-regularity is quite rare outside the Hilbert space context. However, it can occur if one works in the real interpolation scale (see \cite[Theorem 8.6]{LoVer} and references therein). Moreover, there is a general positive SMR result if one works with a shifted scale compared to the definition in \eqref{eq:SMRest}. Suppose that $X$ is a Hilbert space, $X_1=\Dom(A)$, and $X_{1/2}=[X,X_1]_{1/2}$. If $-A$ generates a strongly continuous analytic semigroup, then in \cite[Theorem 3.13]{AVsurvey} it was shown that for every $p\in [2, \infty)$, 
\begin{align*}
\|A U_G\|_{L^p(\Omega\times (0,T);X)}\leq C \|G\|_{L^p(\Omega\times (0,T);X_{1/2})}.
\end{align*}
The unshifted version \eqref{eq:SMRest} does not always hold, since it implies a square function estimate that does not hold for all analytic semigroups (see \cite[Lemma 4.2]{AV19}). 

In this subsection we will show that $(S_2)$ only holds in Hilbert spaces.
\begin{theorem}\label{thm:HScharacS2}
Let $X$ be a UMD Banach space with type $2$. Then $X$ satisfies condition $(S_2)$ if and only if $X$ is isomorphic to a Hilbert space.
\end{theorem}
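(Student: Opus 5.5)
The plan is to prove the easy direction by a Hilbert space computation, and to show that $(S_2)$ forces cotype $2$. Kwapie\'n's theorem then finishes the argument, since $X$ has type $2$ by assumption. By Lemma \ref{lem:gammaRbdd} with $p=2$, it suffices throughout to work with the family $\{N_{h_\delta}:\delta>0\}$ from $L^2(\R_+;X)$ into $L^2(\R_+;\gamma(\R_+;X))$.

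\emph{Hilbert space implies $(S_2)$.} If $X=H$ is a Hilbert space, then $\gamma(\R_+;H)=L^2(\R_+;H)$ isometrically. Hence $L^2(\R_+;X)$ and $L^2(\R_+;\gamma(\R_+;X))$ are Hilbert spaces. On Hilbert spaces Rademacher sums are orthogonal, so $R$-boundedness reduces to uniform boundedness. Fubini and Young's inequality give
\[
\|N_{h_\delta}G\|^2=\int_0^\infty\!\!\int_0^t |h_\delta(t-s)|^2\|G(s)\|^2\dd s\dd t\le \|h_\delta\|_{L^2}^2\|G\|_{L^2(\R_+;H)}^2=\|G\|^2_{L^2(\R_+;H)}.
\]
For an isomorph of a Hilbert space, the condition transfers with constants depending on the isomorphism.

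\emph{$(S_2)$ implies cotype $2$.} Fix $x_1,\ldots,x_n\in X$ and $\eta>0$. Put $G_j:=x_j\1_{(0,\eta)}$ and $\delta_j:=2^j\eta$. The right-hand side of the $R$-bound is $\eta\,\E\|\sum_j\varepsilon_jx_j\|^2$. For the left-hand side, fix $t\in I_j:=(2^{j-1}\eta+\eta,2^j\eta)$. For such $t$, the window $(t-\delta_i,t)$ contains $(0,\eta)$ exactly when $i\ge j$ and misses it when $i<j$. Consequently
\[
\Big\|\sum_i\varepsilon_iN_{h_{\delta_i}}G_i(t)\Big\|_{\gamma}= \eta^{1/2}\Big\|\sum_{i\ge j}\varepsilon_i2^{-i/2}\eta^{-1/2}x_i\Big\|_X .
\]
Here I use that $\|\1_{(0,\eta)}\otimes y\|_\gamma=\eta^{1/2}\|y\|$.

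Taking $\E$, the contraction principle (conditioning on all $\varepsilon_i$ with $i\neq j$ and using symmetry) gives the lower bound $\ge 2^{-j}\|x_j\|^2$ for the squared norm. Integrating over $t\in I_j$, a set of length $\eqsim 2^j\eta$ for $j\ge2$, and summing over the disjoint sets $I_j$ gives
\[
\eta\sum_j\|x_j\|^2\lesssim \E\Big\|\sum_j\varepsilon_jN_{h_{\delta_j}}G_j\Big\|^2_{L^2(\R_+;\gamma)}\le C^2\,\eta\,\E\Big\|\sum_j\varepsilon_jx_j\Big\|^2 .
\]
The term $j=1$ is handled by shifting the indices. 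This is exactly cotype $2$.

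The main point needing care is this lower bound. One must check that the $\gamma$-norm of the vector-valued step function reduces to a single Rademacher sum, and that dropping the indices $i>j$ is legitimate. The latter holds because, for fixed $t$, the contributions with $i\ge j$ all carry the same time factor $\1_{(0,\eta)}$. Combined with type $2$, Kwapie\'n's theorem then shows that $X$ is isomorphic to a Hilbert space.
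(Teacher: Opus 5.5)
Your proof is correct. The overall architecture is the same as the paper's: pass to $\{N_{h_\delta}\}$ via Lemma~\ref{lem:gammaRbdd}, test on vectors times the indicator of a short interval with geometrically separated scales, integrate over disjoint time intervals to get cotype $2$, and conclude with Kwapie\'n. The difference lies in how the lower bound in the cotype step is obtained.

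\textbf{The paper's lower bound (Proposition~\ref{prop:Ndelta}).} It uses decreasing scales $s_j=\delta^j$ and a common support $J=(0,s_{n+1})$. For $t\in I_j=(s_{j+1},s_j)$ it only knows that the window $(t-s_j,t)$ covers $J$. The windows with $\ell>j$ may meet $J$ only partially. So the paper does not identify $G_\varepsilon(t)$ exactly. It bounds $\|G_\varepsilon(t)\|_\gamma\ge\|G_\varepsilon(t)^*x_j^*\|_{L^2(\R_+)}$ with a norming functional $x_j^*$. It then discards all other terms by orthogonality of the Rademachers in the scalar space $L^2(\R_+)$.

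\textbf{Your lower bound.} You tune increasing dyadic windows against a fixed support so that every window either contains $(0,\eta)$ or misses it. This makes the $\gamma$-norm collapse exactly to the tail sum $\|\sum_{i\ge j}\varepsilon_i2^{-i/2}x_i\|$. You then remove the tail by the contraction principle, or by Jensen after conditioning.

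\textbf{Comparison.} Your route avoids duality and adjoints of $\gamma$-radonifying operators, at the cost of exact bookkeeping of the windows. The paper's route tolerates partial overlaps. Letting $\delta\downarrow0$, it gives the cotype-$2$ constant at most $C$ exactly. Your ratio $2$ loses an absolute factor through $|I_j|\ge2^{j-2}\eta$.

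That loss, and the need to shift indices for $j=1$, both disappear with a larger ratio. Take $\rho>2$, $\delta_j=\rho^j\eta$ and $I_j=(\rho^{j-1}\eta+\eta,\rho^j\eta)$. The same containment/disjointness argument gives
\[
(1-2/\rho)\sum_j\|x_j\|^2\le C^2\,\E\Big\|\sum_j\varepsilon_jx_j\Big\|^2 .
\]
Letting $\rho\to\infty$ recovers the sharp constant.
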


A celebrated theorem of Kwapie\'n \cite[Theorem 7.3.1]{HNVW2} asserts that a Banach space is isomorphic to a Hilbert space if and only if it has type $2$ and cotype $2$. In view of this result, and since we are already assuming that $X$ has type $2$, it therefore suffices to prove that $X$ has cotype $2$. This will follow from Lemma  \ref{lem:gammaRbdd} and the following stronger statement.

\begin{proposition}\label{prop:Ndelta}
Let $X$ be a Banach space. For $\delta>0$, define $N_\delta$, initially on the $X$-valued step functions on $\R_+$, by
\[
        (N_\delta f)(t)h
        =
        \delta^{-1/2}
        \int_{(t-\delta,t)\cap\R_+}h(s)f(s)\dd s,
        \qquad h\in L^2(\R_+).
\]
Assume that these operators extend to an $R$-bounded family from $L^2(\R_+;X)$ into \(L^2(\R_+;\gamma(L^2(\R_+),X))\), with $R$-bound at most $C$. Then $X$ has type $2$ and cotype $2$. Moreover, the cotype $2$ constant is at most $C$.
\end{proposition}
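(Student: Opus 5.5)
The plan is to test the $R$-bound on two families of step functions. For type $2$, a single operator $N_\delta$ with a very large window already suffices. For cotype $2$, one uses many operators $N_{\delta_j}$ with nested windows of geometrically decreasing lengths: they turn a Rademacher sum in the input into an (almost) orthogonal Gaussian sum in the $\gamma(L^2(\R_+),X)$-variable. Throughout, the $\gamma$-norm of a finite-rank operator $\sum_k e_k\otimes y_k$ with $(e_k)$ orthonormal is $(\E\|\sum_k\gamma_k y_k\|^2)^{1/2}$. We also use that Gaussian and Rademacher type $2$ are equivalent.

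\emph{Type $2$.} Take $x_1,\dots,x_n\in X$, $J_l:=(l-1,l)$ and $f:=\sum_{l=1}^n \1_{J_l}x_l$, with a single $\delta=2n$. For $t\in(n,2n)$ the window $(t-\delta,t)$ contains $(0,n)$, so
\[
(N_\delta f)(t)=\delta^{-1/2}\sum_{l=1}^n \1_{J_l}\otimes x_l ,
\]
and the $\1_{J_l}$ are orthonormal. Hence $\|(N_\delta f)(t)\|_\gamma=(2n)^{-1/2}(\E\|\sum_l\gamma_lx_l\|^2)^{1/2}$. Integrating over $t\in(n,2n)$ and using boundedness of $N_\delta$ (the $R$-bound applied to one operator) gives
\[
\tfrac12\,\E\Big\|\sum_l\gamma_lx_l\Big\|^2\le C^2\|f\|_{L^2(\R_+;X)}^2=C^2\sum_l\|x_l\|^2 ,
\]
which is Gaussian type $2$ with constant $\sqrt2\,C$.

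\emph{Cotype $2$.} Take $f_j:=\1_{(0,1)}x_j$ and windows $\delta_j:=\eta^{j}/2$ with $\eta\in(0,1)$ small, and fix $t\in(\frac12,1)$. Then
\[
(N_{\delta_j}f_j)(t)=\phi_j\otimes x_j,\qquad \phi_j:=\delta_j^{-1/2}\1_{(t-\delta_j,t)} .
\]
Let $e_k$ be the normalised indicators of $(t-\delta_k,t-\delta_{k+1})$; these are orthonormal, and
\[
\phi_j=\sum_{k\ge j}a_{kj}e_k,\qquad a_{kk}\ge(1-\eta)^{1/2},\quad |a_{kj}|\le\eta^{(k-j)/2}.
\]
The pairing $\sum_j\varepsilon_j\phi_j\otimes x_j$ is then a lower-triangular perturbation of $\sum_k a_{kk}\varepsilon_k e_k\otimes x_k$. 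Write the $m$-th subdiagonal separately, average over $\varepsilon$, and apply Kahane's contraction principle. The $m$-th error term is then bounded by $\eta^{m/2}$ times the diagonal Gaussian quantity. For $\eta$ small these errors can be absorbed, which gives
\[
\E_\varepsilon\Big\|\sum_j\varepsilon_j(N_{\delta_j}f_j)(t)\Big\|_\gamma^2\gtrsim\E\Big\|\sum_k\gamma_kx_k\Big\|^2 .
\]
Integrating over $t\in(\frac12,1)$ and comparing with the right-hand side $C^2\,\E\|\sum_j\varepsilon_jx_j\|^2$ of the $R$-bound yields a comparison of Gaussian and Rademacher sums.

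\emph{Main obstacle.} Upgrading this comparison to cotype $2$. A Gaussian-versus-Rademacher comparison only yields finite cotype, not $\sum_j\|x_j\|^2\le C^2\E\|\sum_j\varepsilon_jx_j\|^2$. The construction therefore has to produce an orthogonal decomposition in which the vectors $x_j$ appear separately: at each time $t$ the $\gamma$-norm should essentially be that of a rank-one piece $\phi_{j(t)}\otimes x_{j(t)}$, while the input remains the common-support Rademacher sum. The first attempt would be to stagger the windows in time, using $f_j=\1_{(0,1)}x_j$ with windows $\delta_j$ growing geometrically. For $t\in(1+\delta_{j-1},1+\delta_j)$ only indices $k\ge j$ contribute. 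One would then try to choose the normalisation so that the tails $k>j$ are negligible, perhaps by combining this with the nested construction in a second time scale. Obtaining the sharp constant $C$, as opposed to $C$ times an absolute factor, would require the exact orthogonality of the pieces rather than a perturbation argument. This is the step I expect to require the most care.
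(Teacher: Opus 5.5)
Your type $2$ argument is correct: applying the single operator $N_{2n}$ to $\sum_l\1_{(l-1,l)}x_l$ gives Gaussian type $2$ with constant $\sqrt2\,C$. It is worth keeping, since the paper's proof only writes out the cotype part; type $2$ is in any case assumed in Theorem~\ref{thm:HScharacS2}.

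The cotype $2$ part, however, is not proved. As you note yourself, the nested-window computation only gives $\E\|\sum_k\gamma_kx_k\|^2\lesssim C^2\E\|\sum_k\varepsilon_kx_k\|^2$, i.e.\ finite cotype. Your last paragraph is a plan rather than an argument. The plan as stated would also fail. In the staggered construction the tail terms $(N_{\delta_k}f_k)(t)$, $k>j$, are rank-one operators with $\gamma$-norm of order $\delta_k^{-1/2}\|x_k\|$. When $\|x_k\|\gg\|x_j\|$ these are not small compared with the $j$-th term, so no choice of normalisation makes them negligible.

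The missing idea is that the tails do not need to be small, because they can simply be discarded. For fixed $t$ and any vectors $y_1,\dots,y_n$ in a Banach space, $\E_\varepsilon\|\sum_k\varepsilon_ky_k\|^2\ge\|y_j\|^2$; condition on $\varepsilon_j$ and use Jensen. The paper reaches the same conclusion by duality. It uses $\|R\|_{\gamma}\ge\|R^*x_j^*\|_{L^2(\R_+)}$ with $x_j^*$ a norming functional for $x_j$, and then orthogonality of the scalar Rademacher sum in $L^2(\R_+)$ shows that every cross term contributes nonnegatively.

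What remains is a geometry in which, on the $j$-th time interval, the $j$-th window covers the whole common support of the inputs. The paper takes $f_j=|J|^{-1/2}\1_Jx_j$ with $J=(0,s_{n+1})$, $s_j=\delta^j$ and $I_j=(s_{j+1},s_j)$. For $t\in I_j$ one has $J\subseteq(t-s_j,t)$, so the $j$-th term contributes exactly $s_j^{-1}\|x_j\|^2$, and $\int_{I_j}s_j^{-1}\dd t=1-\delta$. Summing over the disjoint $I_j$ gives $(1-\delta)\sum_j\|x_j\|^2\le C^2\E\|\sum_j\varepsilon_jx_j\|^2$. Letting $\delta\downarrow0$ gives the sharp constant $C$, with no perturbation argument.

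Your staggered idea is in fact this construction after rescaling, once the tails are dropped rather than estimated. Take $f_j=\1_{(0,1)}x_j$, windows $\delta_j=\eta^{-j}$ and time intervals $(\delta_{j-1},\delta_j)$. For $t$ in the $j$-th interval the window $(t-\delta_j,t)\cap\R_+$ contains $(0,1)$, and the same lower bound yields $(1-\eta)\sum_j\|x_j\|^2\le C^2\E\|\sum_j\varepsilon_jx_j\|^2$.
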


\begin{proof}

To prove that $X$ has cotype $2$ with cotype constant at most $C$, fix $x_1,\ldots,x_n\in X$. For each $j$, choose norm-one vectors $x_j^*\in X^*$ such that $\langle x_j,x_j^*\rangle=\|x_j\|$.
Let $\delta\in(0,1)$, and put \[
s_j=\delta^j,\qquad
I_j=(s_{j+1},s_j),\qquad
J=(0,s_{n+1}).
\]
The intervals $I_1,\ldots,I_n$ are pairwise disjoint and $|I_j| = (1-\delta)s_j$. Moreover,
\begin{align*}
t\in I_j\quad  \Longrightarrow \quad J\subseteq(t-s_j,t)
\end{align*}
Indeed, if $t\in I_j$, then $t<s_j$ and hence $t-s_j<0$, and also $s_{n+1}\le s_{j+1} < t$.

Define $f_j\in L^2(\R_+;X)$ by
\[f_j:=\frac{\1_J}{|J|^{1/2}} x_j.\]
By the $R$-boundedness assumption,
\begin{align*}
\E
\Big\|
\sum_{j=1}^n \varepsilon_jN_{s_j}f_j
\Big\|_{L^2(\R_+;\gamma(L^2(\R_+),X))}^2
&\le
C^2
\E
\Big\|
\sum_{j=1}^n \varepsilon_jf_j
\Big\|_{L^2(\R_+;X)}^2
=
C^2 \,
\E
\Big\|
\sum_{j=1}^n \varepsilon_jx_j
\Big\|^2.
\end{align*}
It remains to bound the left-hand side from below by $(1-\delta)\sum_{j=1}^n \|x_j\|^2$. Indeed, then we can let $\delta\downarrow 0$ and find that $X$ has cotype $2$ with constant $C$.

To estimate the left-hand side from below, for $\varepsilon = (\varepsilon_1,\dots \varepsilon_n)$ with each $\varepsilon_\ell$ unimodular, define $G_\varepsilon:\R_+\to \gamma(L^2(\R_+),X)$ by
\[
G_{\varepsilon}(t):=\sum_{\ell=1}^n \varepsilon_\ell(N_{s_\ell}f_\ell)(t).
\]
For every $R\in\gamma(L^2(\R_+),X)$ and $x^*\in X^*$,
\[
\|R^*x^*\|_{L^2(\R_+)}
\leq
\|R\|_{\gamma(L^2(\R_+),X)}\|x^*\|.
\]
Now let $(\varepsilon_\ell)_{\ell=1}^n$ be a Rademacher sequence. Applying the definition of $G_\varepsilon$ pointwise and taking square expectations, for $t\in I_j$ we obtain
\[
\E\|G_{\varepsilon}(t)\|_{\gamma(L^2(\R_+),X)}^2
\geq
\E\|G_{\varepsilon}(t)^*x_j^*\|_{L^2(\R_+)}^2.
\]
An explicit calculation gives
\[
G_{\varepsilon}(t)^*x_j^*
=
\sum_{\ell=1}^n
\varepsilon_\ell s_\ell^{-1/2} |J|^{-1/2}
\1_{J\cap(t-s_\ell,t)}
\langle x_\ell,x_j^*\rangle.
\]
By orthogonality of the Rademacher variables,
\begin{align*}
\E\|G_{\varepsilon}(t)^*x_j^*\|_{L^2(\R_+)}^2
&=
\sum_{\ell=1}^n
\frac{|\langle x_\ell,x_j^*\rangle|^2}{s_\ell |J|}
\bigl|J\cap(t-s_\ell,t)\bigr|\geq
\frac{|\langle x_j,x_j^*\rangle|^2}{s_j |J|}
\bigl|J\cap(t-s_j,t)\bigr|.
\end{align*}
Recalling that $\langle x_j,x_j^*\rangle=\|x_j\|$ and that $t\in I_j$ implies $J\subseteq(t-s_j,t)$, we find that
\[
\E\|G_{\varepsilon}(t)\|_{\gamma(L^2(\R_+),X)}^2
\geq
\frac{1}{s_j}\|x_j\|^2,
\qquad t\in I_j.
\]
Integrating over the pairwise disjoint intervals $I_j$, we obtain
\begin{align*}
\E
\Bigl\|
\sum_{j=1}^n \varepsilon_jN_{s_j}f_j
\Bigr\|_{L^2(\R_+;\gamma(L^2(\R_+),X))}^2
 &=
\int_{\R_+}\E\|G_{\varepsilon}(t)\|_{\gamma(L^2(\R_+),X)}^2\dd t
\\ & \geq
\sum_{j=1}^n
\int_{I_j}
\frac{1}{s_j}\|x_j\|^2\dd t
=
(1-\delta)
\sum_{j=1}^n\|x_j\|^2.
\end{align*}

Finally, combining the upper and lower estimates, we obtain
\[
        (1-\delta)\sum_{j=1}^n\|x_j\|^2
        \leq
        C^2\E\Big\|\sum_{j=1}^n\varepsilon_jx_j\Big\|^2.
\]
The proof is completed by letting $\delta\downarrow0$.
\end{proof}

\begin{proof}[Proof of Theorem \ref{thm:HScharacS2}]
Suppose first that $X$ is isomorphic to a Hilbert space. The domain and codomain of the stochastic convolution operators in Definition \ref{def:Rbddconv}, with $p=2$, are then isomorphic to Hilbert spaces. In Hilbert spaces, uniform boundedness is equivalent to $R$-boundedness. Thus, the uniform boundedness estimate \(\sup_{\delta>0}\|S_{h_\delta}\|<\infty\) implies that $X$ satisfies $(S_2)$.

Conversely, suppose that $X$ is a UMD space with type $2$ satisfying $(S_2)$. By Lemma \ref{lem:gammaRbdd}, the family \(\{N_\delta:\delta>0\}\) is $R$-bounded, hence Proposition \ref{prop:Ndelta} shows that $X$ has cotype $2$. Since $X$ has type $2$ by assumption, Kwapie\'n's theorem implies that $X$ is isomorphic to a Hilbert space.
\end{proof}

\subsection{Schauder multiplier example}\label{ss:Schauder}
We next show that the stochastic convolution condition $(S_p)$ is not just  a technical assumption in the proof of positive results. We will show that it is necessary for a simple diagonal operator acting on the Rademacher space
\[
\Rad^p(X):=
\overline{\Bigl\{
\sum_{n=1}^N\varepsilon_nx_n:
N\geq1,\ x_1,\ldots,x_N\in X
\Bigr\}}^{L^p(\Omega_\varepsilon;X)}.
\] 
Indeed, we will discuss the operator $A$ on $\Rad^p(X)$  defined by
\begin{equation}\label{eq:Arad}
A (x_n)_{n\geq 1} = (2^n x_n)_{n\geq 1}
\end{equation}
with its natural domain
\[\Dom(A) = \bigl\{(x_n)_{n\geq 1}\in \Rad^p(X): (2^n x_n)_{n\geq 1}\in \Rad^p(X)\bigr\}.\]

Let us first record the following: 

\begin{proposition}\label{prop:HinftyRad}
Let $X$ be a Banach space and let $p\in[1,\infty)$. Then the
operator $A$ defined by
\[
        A\Big(\sum_{n\geq1}\varepsilon_nx_n\Big)
        :=
        \sum_{n\geq1}2^n\varepsilon_nx_n
\]
on $\Rad^p(X)$, with its natural domain, is sectorial and has a
bounded $H^\infty$-calculus of angle $0$.
\end{proposition}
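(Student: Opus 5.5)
The plan is to reduce everything to Kahane's contraction principle on $\Rad^p(X)$. Since the Rademacher variables are Steinhaus, for bounded complex scalars $(a_n)$ the diagonal map $\sum_n\varepsilon_nx_n\mapsto\sum_n a_n\varepsilon_nx_n$ satisfies
\[
\Big\|\sum_n a_n\varepsilon_nx_n\Big\|_{L^p(\Omega_\varepsilon;X)}\le \sup_n|a_n|\,\Big\|\sum_n\varepsilon_nx_n\Big\|_{L^p(\Omega_\varepsilon;X)}
\]
on finite sums. Rotation invariance of the Steinhaus variables gives this with constant $1$ for unimodular coefficients. The general case follows by convexity, because the closed unit disc is the closed convex hull of the circle.

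The diagonal map therefore extends to a contraction on $\Rad^p(X)$. It also acts coordinatewise on all of $\Rad^p(X)$: elements of $\Rad^p(X)$ have well-defined coefficients $x_n=\E(\overline{\varepsilon_n}\,\cdot)$, and these depend continuously on the element. This is the only point needing care, namely that $A$ with its natural domain is well defined and closed. Closedness follows from continuity of the coefficient functionals.

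The steps are as follows.
\begin{enumerate}
\item \emph{Density and injectivity.} Finite sums lie in $\Dom(A)$ and are dense in $\Rad^p(X)$, so $A$ is densely defined. Injectivity is clear from the coefficients.
\item \emph{Sectoriality of angle $0$.} For $\lambda\notin\overline{\Sigma_\theta}$, set
\[
R(\lambda)\Big(\sum_n\varepsilon_nx_n\Big):=\sum_n(\lambda-2^n)^{-1}\varepsilon_nx_n .
\]
The coefficients satisfy $\sup_n|\lambda(\lambda-2^n)^{-1}|\le C_\theta$, so the contraction principle shows that $R(\lambda)$ is bounded with $\|\lambda R(\lambda)\|\le C_\theta$. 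Checking coefficients shows that $R(\lambda)$ maps into $\Dom(A)$ and inverts $\lambda-A$. The estimate $\|t(t+A)^{-1}\|\le1$ for $t>0$ is the same argument.
\item \emph{Bounded $H^\infty$-calculus of angle $0$.} For $\varphi\in H^1(\Sigma_\sigma)\cap H^\infty(\Sigma_\sigma)$ with $\sigma>0$, the Dunford integral is evaluated coefficientwise via Cauchy's formula, giving
\[
\varphi(A)\Big(\sum_n\varepsilon_nx_n\Big)=\sum_n\varphi(2^n)\varepsilon_nx_n .
\]
Interchanging the integral and the finite sum is justified by absolute convergence. The contraction principle then gives $\|\varphi(A)\|\le\|\varphi\|_\infty$, so the $H^\infty$-calculus is bounded for every $\sigma>0$, i.e.\ has angle $0$.
\end{enumerate}

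This argument is identical in structure to Lemma~\ref{lem:h-infty}, with the Khintchine--Maurey square-function estimate replaced by Kahane's contraction principle. Because of this, no geometric assumption on $X$ and no restriction $p>1$ is needed.
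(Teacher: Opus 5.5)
Your proposal is correct and follows essentially the same route as the paper: the diagonal resolvent and $\varphi(A)$ are bounded by Kahane's contraction principle, using $\sup_{\lambda\notin\overline{\Sigma_\theta}}\sup_n|\lambda/(\lambda-2^n)|<\infty$, and Cauchy's formula identifies the Dunford integral with $\sum_n\varphi(2^n)\varepsilon_nx_n$, which gives $\|\varphi(A)\|\le\|\varphi\|_\infty$. Your extra care with the coefficient functionals $\E(\overline{\varepsilon_n}\,\cdot)$ fills in what the paper calls routine; uniqueness of coefficients also holds, because the conditional expectations onto $\sigma(\varepsilon_1,\dots,\varepsilon_N)$ act as contractive partial-sum projections on $\Rad^p(X)$ converging strongly to the identity.
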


\begin{proof} It is routine to check that finite Rademacher sums form a core for $A$. For
$\lambda\notin[0,\infty)$, $\lambda - A$ is invertible and its resolvent is given on such sums by
\[
        R(\lambda,A)
        \Big(\sum_{n=1}^N\varepsilon_nx_n\Big)
        =
        \sum_{n=1}^N
        \frac{1}{\lambda-2^n}\varepsilon_nx_n.
\]
For every $\theta>0$,
\[
        \sup_{\lambda\notin\overline{\Sigma_\theta}}
        \sup_{n\geq1}
        \Big|\frac{\lambda}{\lambda-2^n}\Big|
        <\infty.
\]
The Kahane contraction principle therefore shows that $A$ is sectorial of angle $0$.

Similarly, for $\varphi\in H^\infty(\Sigma_\theta)$, we may define the bounded operator $\varphi(A)$ by
\[
        \varphi(A)
        \Big(\sum_{n=1}^N\varepsilon_nx_n\Big)
        =
        \sum_{n=1}^N\varepsilon_n\varphi(2^n)x_n.
\]
By Cauchy's theorem, for functions $\varphi \in H^1(\Sigma_\theta)\cap H^\infty(\Sigma_\theta)$, $\varphi(A)$ is given by the Dunford calculus, i.e.,
$$ \varphi(A) = \frac1{2\pi i} \int_{\partial \Sigma_\nu} \varphi(z) R(z,A)\dd z,$$
for any $\nu\in (0,\theta)$. The norm of $\varphi(A)$ can be directly estimated as
\[
        \|\varphi(A)\|_{\mathscr L(\Rad^p(X))}
        \le
        \sup_{n\geq1}|\varphi(2^n)|
        \leq
        \|\varphi\|_{H^\infty(\Sigma_\theta)}.
\]
By \cite[Definition 10.2.20]{HNVW2}, this shows that $A$ has a bounded $H^\infty$-calculus of angle $0$.
\end{proof}

The preceding observations lead to the following characterisation.
\begin{theorem}\label{thm:SMRRad}
Let $X$ be a UMD Banach space with type $2$ and let $p\in [2, \infty)$. Let $A$ be as in \eqref{eq:Arad}. Let $T\in (0,\infty)$. Then the following are equivalent:
\begin{enumerate}[label={\rm(\arabic*)}, leftmargin=*]
\item\label{it1:SMRRad} The operator $A$ has stochastic maximal $L^p$-regularity on $(0,T)$;
\item\label{it2:SMRRad} The operator $A$ has stochastic maximal $L^p$-regularity on $\R_+$;
\item\label{it3:SMRRad} The $R$-boundedness condition $(S_p)$ holds.
\end{enumerate}
\end{theorem}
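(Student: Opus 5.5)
The plan is to reduce everything to the $R$-boundedness of the exponential kernels $\{S_{k_{2^n}}:n\geq1\}$ and then invoke Proposition \ref{prop:HpEp}. The key observation is that for $G=\sum_n\varepsilon_n G_n$ with $G_n\in L^p_{\mathscr F}(\Omega\times I;X)$, the semigroup acts diagonally. Thus
\[
A^{1/2}U_G(t)=\sum_{n}\varepsilon_n\int_0^t 2^{n/2}e^{-2^n(t-s)}G_n(s)\dd W(s)=\sum_n\varepsilon_n (S_{k_{2^n}}G_n)(t).
\]
By Fubini, and by Kahane's inequalities to pass between $L^p(\Omega_\varepsilon)$ and $L^2(\Omega_\varepsilon)$ norms, the estimate \eqref{eq:SMRest} on $\Rad^p(X)$ is equivalent to
\[
\Big\|\sum_n\varepsilon_n S_{k_{2^n}}G_n\Big\|_{L^p(\Omega_\varepsilon;L^p(\Omega\times I;X))}\lesssim \Big\|\sum_n\varepsilon_n G_n\Big\|_{L^p(\Omega_\varepsilon;L^p(\Omega\times I;X))}.
\]
Taking one $G_n$ per index recovers the definition of $R$-boundedness for the family $\{S_{k_{2^n}}:n\geq1\}$ with distinct operators. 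This is enough: a general $R$-boundedness test with repeated operators reduces to the distinct case by grouping equal operators, since $\sum_{j:n_j=n}\varepsilon_j G_j$ is handled by the standard trick of an extra independent sign, or simply because $R$-boundedness of a countable family can be tested with distinct members. A few technical points remain. Progressive measurability of $\Rad^p(X)$-valued $G$ is equivalent to that of the coordinates. Finite Rademacher sums are dense. The stochastic integral in $\Rad^p(X)$ commutes with the coordinate functionals. Also, $\Rad^p(X)$ is UMD with type $2$, being a closed subspace of $L^p(\Omega_\varepsilon;X)$ with $p\ge2$.

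With this identification the implications are as follows. The equivalence of \ref{it1:SMRRad} and \ref{it2:SMRRad} is Lemma \ref{lemma:timeintervalSp}, applied after noting that SMR on $I$ means $R$-boundedness of $\{S_{k_{2^n}}\}$ on $I$. This family is not dilation invariant, however, so I would instead pass through the dilation-invariant family. From $(0,T)$, the dilation argument of Lemma \ref{lemma:timeintervalSp} gives uniform $R$-bounds for $\{S_{k_{\theta 2^n}}:n\geq1\}$ on $(0,T/\theta)$ for $\theta\in(0,1]$. Since $\{\theta 2^n:n\geq1\}\supseteq\{\theta 2^n\geq 1\}$ and the scales are lacunary, Step 1 of the proof of Proposition \ref{prop:HpEp} with $C_0=2$ handles this.

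More cleanly, I would take the route \ref{it1:SMRRad}$\Rightarrow$\ref{it3:SMRRad}$\Rightarrow$\ref{it2:SMRRad}$\Rightarrow$\ref{it1:SMRRad}.
\begin{itemize}
\item \ref{it2:SMRRad}$\Rightarrow$\ref{it1:SMRRad} is restriction.
\item \ref{it3:SMRRad}$\Rightarrow$\ref{it2:SMRRad}: $(S_p)$ implies $(S_p^{\exp})$ by Proposition \ref{prop:HpEp}, hence $\{S_{k_\lambda}:\lambda>0\}$ is $R$-bounded on $\R_+$. The identification above then gives \ref{it2:SMRRad}. Alternatively one can apply Theorem \ref{thm:survey} together with Proposition \ref{prop:HinftyRad}.
\item \ref{it1:SMRRad}$\Rightarrow$\ref{it3:SMRRad}: the identification gives $R$-boundedness of $\{S_{k_{2^n}}\}$ on $(0,T)$, hence of $\{N_{k_{2^n}}\}$ by Lemma \ref{lem:gammaRbdd}.
\end{itemize}

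The last implication is the main obstacle: the stronger form in the proof of Proposition \ref{prop:HpEp} assumes $R$-boundedness on $\R_+$, while we only have $(0,T)$. I would handle this by dilating. Rescaling time by $\theta=2^{-m}$ maps $N^{T}_{k_{2^n}}$ to $N^{2^mT}_{k_{2^{n-m}}}$ with the same $R$-bound, as in the proof of Lemma \ref{lemma:timeintervalSp}. Hence $\{N_{k_{2^j}}:j\geq 1-m\}$ is $R$-bounded on $(0,2^mT)$ uniformly in $m$. Restricting to $(0,T')$ and letting $m\to\infty$ via monotone convergence, the family $\{N_{k_{2^j}}:j\in\Z\}$ is $R$-bounded on $\R_+$. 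Steps 1--5 of Proposition \ref{prop:HpEp} with $\lambda_n=2^n$ (indexed over $\Z$, which only simplifies Step 1) then yield $(S_p)$.
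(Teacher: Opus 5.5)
Your proposal is correct. Its core matches the paper's proof: the diagonal identification $A^{1/2}U_G=\sum_n\varepsilon_n S_{k_{2^n}}G_n$, the grouping trick with an extra independent Rademacher sequence to allow repeated members, $(S_p)\Rightarrow(S_p^{\exp})$ for \ref{it3:SMRRad}$\Rightarrow$\ref{it2:SMRRad}, and the stronger form of Proposition \ref{prop:HpEp} with $\lambda_n=2^n$ for the converse.

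You differ only in how you handle the time interval. The paper obtains \ref{it1:SMRRad}$\Leftrightarrow$\ref{it2:SMRRad} by citing \cite{AV19} together with the uniform exponential stability of $e^{-tA}$, and then proves \ref{it2:SMRRad}$\Rightarrow$\ref{it3:SMRRad} only on $\R_+$. You instead prove \ref{it1:SMRRad}$\Rightarrow$\ref{it3:SMRRad} directly with a self-contained dilation argument. This works because the dyadic spectrum is mapped into itself by $\lambda\mapsto 2^m\lambda$, $m\geq0$. By the dilation in the proof of Lemma \ref{lemma:timeintervalSp}, the $R$-bound of $\{N_{k_{2^n}}:n\geq1\}$ on $(0,2^mT)$ equals that of the subfamily $\{N_{k_{2^{n+m}}}:n\geq1\}$ on $(0,T)$. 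Restriction and monotone convergence then give $R$-boundedness on $\R_+$. So the detour through $j\in\Z$ is not needed; the family $n\geq1$ already feeds into the stronger form of Proposition \ref{prop:HpEp}.

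At the operator level, the same observation says that $2^mA$ is isometrically similar, via the shift $\varepsilon_n\mapsto\varepsilon_{n+m}$, to the restriction of $A$ to $\overline{\spn}\{\varepsilon_nx:n>m,\ x\in X\}$. This is the analogue of the scaling argument the paper uses for the Laplacian. Your route is self-contained and exploits the specific structure of $A$. The paper's citation is shorter and rests on a general principle valid for any exponentially stable semigroup.

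You should drop the exploratory paragraph asserting that \ref{it1:SMRRad}$\Leftrightarrow$\ref{it2:SMRRad} ``is Lemma \ref{lemma:timeintervalSp}''. That lemma needs a dilation-invariant family, as you yourself observe, and your final argument does not rely on it.
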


\begin{proof}
Write $Y:=\Rad^p(X)$. The equivalence of \ref{it1:SMRRad} and \ref{it2:SMRRad} follows from \cite{AV19} and the fact that the semigroup $e^{-tA}$ is uniformly exponentially stable.

\ref{it3:SMRRad} $\Rightarrow$ \ref{it2:SMRRad}: By Proposition \ref{prop:HpEp}, $(S_p^{\exp})$ holds. By Fubini's theorem we can reformulate this on $Y$, and this immediately implies that $A$ has stochastic maximal $L^p$-regularity on $\R_+$. Indeed,
\[
\begin{aligned}
    \int_0^t A^{1/2}e^{-(t-s)A}\sum_{n\geq 1}\varepsilon_n G_n(s)\dd W(s)
    &=
    \sum_{n\geq 1}\varepsilon_n
    \int_0^t 2^{n/2}e^{-2^n(t-s)}G_n(s)\dd W(s)       \\
    &=
    \sum_{n\geq 1}\varepsilon_n S_{k_{2^n}}G_n(t).
\end{aligned}
\]
Thus, the stochastic maximal $L^p$-regularity estimate for $A$ on $\R_+$ takes the form
\[
    \Big\|
        \sum_{n\geq 1}\varepsilon_n S_{k_{2^n}} G_n
    \Big\|_{L^p(\Omega_\varepsilon;L^p(\Omega\times\R_+;X))}
    \lesssim
    \Big\|
        \sum_{n\geq 1}\varepsilon_n G_n
    \Big\|_{L^p(\Omega_\varepsilon;
        L^p(\Omega\times\R_+;X))},
\]
and this bound holds by $(S_p^{\exp})$.

\smallskip

\ref{it2:SMRRad} $\Rightarrow$ \ref{it3:SMRRad}:
Let $n_1,\ldots,n_N$ be pairwise distinct and let $G_1,\ldots,G_N$ be adapted simple $X$-valued processes. Applying stochastic maximal $L^p$-regularity to
\(G:=\sum_{j=1}^N\varepsilon_{n_j}G_j\)
gives
\[
\Big\|
\sum_{j=1}^N
\varepsilon_{n_j}S_{k_{2^{n_j}}}G_j
\Big\|_{L^p(\Omega_\varepsilon;
L^p(\Omega\times\R_+;X))}
\lesssim
\Big\|
\sum_{j=1}^N
\varepsilon_{n_j}G_j
\Big\|_{L^p(\Omega_\varepsilon;
L^p(\Omega\times\R_+;X))}.
\]
Thus the estimate required for $R$-boundedness holds for distinct members of the family $  \{S_{k_{2^n}}:n\geq1\}.$
Allowing repetitions does not change the conclusion: one groups equal indices and introduces an additional independent Rademacher sequence. Hence the full dyadic family is $R$-bounded.
Applying the stronger form of Proposition \ref{prop:HpEp} mentioned in its proof, with
$\lambda_n=2^n$, now gives condition $(S_p)$.
\end{proof}

As a consequence we obtain the following, which  gives an alternative route to Corollary \ref{cor:main}.

\begin{corollary}\label{cor:noSMR}
Let $p\in [2, \infty)$ and $r\in (2, \infty)$, and let $A$ be the operator on $\Rad^p(\ell^{2}(\ell^r))$ defined by \eqref{eq:Arad}, i.e.,
\begin{equation*}
A (x_n)_{n\geq 1} = (2^n x_n)_{n\geq 1}
\end{equation*}
with its natural domain. Let either $I= (0,T)$ with $T\in (0,\infty)$ or $I = \R_+$. Then $A$ has a bounded $H^\infty$-calculus of angle $0$, but $A$ does not have stochastic maximal $L^p$-regularity on $I$.
\end{corollary}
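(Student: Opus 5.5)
The plan is to combine Proposition \ref{prop:HinftyRad}, Theorem \ref{thm:SMRRad} and Theorem \ref{thm:mainSMR}. Put $X:=\ell^2(\ell^r)=X_{2,r}$. By Lemma \ref{lem:geometry}, $X$ is a UMD Banach function space of type $2$, so the standing hypotheses of Theorem \ref{thm:SMRRad} are met. The bounded $H^\infty$-calculus of angle $0$ for $A$ on $\Rad^p(X)$ is Proposition \ref{prop:HinftyRad}. By Theorem \ref{thm:SMRRad}, stochastic maximal $L^p$-regularity on $(0,T)$, or on $\R_+$, is equivalent to condition $(S_p)$ for $X$. It therefore suffices to show that $X_{2,r}$ fails $(S_p)$ for every $p\in[2,\infty)$.

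\textbf{Failure of $(S_p)$.} Suppose, for contradiction, that $X_{2,r}$ satisfies $(S_p)$ for some $p\in[2,\infty)$.
\begin{enumerate}[label={\rm(\roman*)}, leftmargin=*]
\item If $p=2$, Theorem \ref{thm:HScharacS2} makes $X_{2,r}$ isomorphic to a Hilbert space, hence of cotype $2$. But $X_{2,r}$ contains an isometric copy of $\ell^r$ (the span of $e_{1,n}$), and $\ell^r$ has no cotype $2$ for $r>2$. This is a contradiction.
\item If $p\in(2,\infty)$, I would apply Theorem \ref{thm:survey} together with Proposition \ref{prop:HpEp}. Condition $(S_p)$ gives $(S_p^{\exp})$, so every operator with a bounded $H^\infty$-calculus of angle $<\pi/2$ on $X_{2,r}$ has stochastic maximal $L^p$-regularity. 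In particular this applies to the diagonal operator \eqref{eq:Aemndef}, which qualifies by Lemma \ref{lem:h-infty}. This contradicts Theorem \ref{thm:mainSMR}, because $q=2$ and $r>2$ make both alternatives there fail.
\end{enumerate}
Alternatively, Corollary \ref{cor:Sp-independent-p} reduces any $p$ to a single $p>2$. The argument in (ii) is the cleanest and is where the real content lies. Its only ingredient is the negative result already established via concavification and Proposition \ref{prop:MRq=1}.

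\textbf{Remaining checks.} The point needing care is that Theorem \ref{thm:mainSMR} is formulated on $I=(0,T)$ or $\R_+$, while $(S_p)$ is an $\R_+$ condition. Since Theorem \ref{thm:survey} yields stochastic maximal regularity on $\R_+$, restriction gives it on $(0,T)$, so either version of Theorem \ref{thm:mainSMR} supplies the contradiction. For the Rademacher operator, both $I=(0,T)$ and $I=\R_+$ are covered by the equivalences \ref{it1:SMRRad}$\Leftrightarrow$\ref{it2:SMRRad}$\Leftrightarrow$\ref{it3:SMRRad} of Theorem \ref{thm:SMRRad}. This completes the argument.
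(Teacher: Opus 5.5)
Your proof is correct. For $p=2$ it coincides with the paper's argument: Theorem~\ref{thm:HScharacS2} combined with the failure of cotype $2$.

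For $p\in(2,\infty)$ you take a genuinely different route to the failure of $(S_p)$ on $\ell^2(\ell^r)$. The paper obtains it by adapting the direct computation of \cite[Theorem 8.2]{NVW-Rbounded}, which is carried out there only for $r=4$. You instead argue by contraposition from Theorem~\ref{thm:survey}. Condition $(S_p)$ would give $(S_p^{\exp})$ by Proposition~\ref{prop:HpEp}, hence stochastic maximal regularity for every operator on $X_{2,r}$ with a bounded $H^\infty$-calculus. This contradicts Theorem~\ref{thm:mainSMR} for the diagonal operator \eqref{eq:Aemndef}.

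Your route has two advantages. It stays within the paper and avoids the unreproduced extension of the $r=4$ computation to general $r>2$. It also exhibits a general principle: if some operator with a bounded $H^\infty$-calculus of angle $<\pi/2$ fails stochastic maximal $L^p$-regularity on $X$, then so does the multiplier of Theorem~\ref{thm:SMRRad} on $\Rad^p(X)$.

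The price is that your argument depends on Theorem~\ref{thm:mainSMR}. It therefore cannot do what the paper wants from this corollary, namely give an alternative route to Corollary~\ref{cor:main} that is independent of the concavification analysis in Section~\ref{sec:counter}. The paper's appeal to \cite{NVW-Rbounded} achieves that independence.

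Incidentally, your case (ii) argument also covers $p=2$ verbatim, since Theorem~\ref{thm:mainSMR} excludes $p=q=2<r$ as well. The separate Hilbert-space argument is therefore optional.
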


 \begin{proof}
 The assertion on the $H^\infty$-calculus follows from Proposition \ref{prop:HinftyRad}. However, as in \cite[Theorem 8.2]{NVW-Rbounded} (where only $r=4$ was considered)
 one sees that $\ell^{2}(\ell^r)$ does not satisfy $(S_p)$ for any $p\in (2,\infty)$.
 If we had $(S_2)$, Theorem \ref{thm:HScharacS2} would imply that $\ell^2(\ell^r)$ is isomorphic to a Hilbert space, which is not the case. Therefore, for all $p\in [2,\infty)$, by Theorem \ref{thm:SMRRad}, the operator $A$ cannot have stochastic maximal $L^p$-regularity.
 \end{proof}

\subsection{The Laplacian}\label{ss:Laplace}
In this final subsection we relate the abstract stochastic convolution condition to the Laplacian $A = -\Delta$ on $L^q(\mathbb R^d;X)$, with $X$ a UMD space and $1<q<\infty$. Under these assumptions, $A$ has a bounded $H^\infty$-calculus of  angle $0$ by \cite[Theorem 10.2.25]{HNVW2},
and stochastic maximal $L^p$-regularity follows if $L^q(\mathbb R^d;X)$ satisfies $(S_p)$ for $p \geq 2 $. The theorem below shows that, in the UMD type $2$ setting with $q\in (2,\infty)$, stochastic maximal regularity of $A$ also forces the same $R$-boundedness condition $(S_p)$. The proof uses a large scale plane-wave argument to extract one-dimensional exponential stochastic convolutions from the heat semigroup.

We start with a lemma that allows us to localise the Laplacian to a single frequency.
\begin{lemma}
\label{lem:large-scale-plane-waves}
Let $X$ be a Banach space, let $p\in[2,\infty)$,
$q\in[1,\infty)$, $\xi \in \R^d\setminus\cbrace{0}$ and let
$\phi\in\mc S(\R^d)$ be non-zero with compact Fourier support.  For
$R>0$ set
$
  \phi_R(y)=R^{-d/q}\phi(y/R)
$
and
define
\[
  T_{R,\xi}(t,y)
  :=
  \brb{(-\Delta)^{1/2}e^{t\Delta}-|\xi|e^{-t|\xi|^2}}(e^{i \ip{\xi,\cdot} }\phi_R)(y), \qquad (t,y) \in \R_+\times \R^d.
\]
For a bounded interval $I \subseteq \R_+$ and $x \in X$ we have
\[
  \lim_{R\to\infty}
  \nrm{
    t\mapsto
    \bracb{
      s\mapsto
      \1_I(t-s) T_{R,\xi}(s,\cdot)x
    }
  }_{L^p(\R_+;\gamma(\R_+;L^q(\R^d;X)))}
  =
  0.
\]
\end{lemma}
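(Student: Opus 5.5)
The plan is to compute $T_{R,\xi}$ on the Fourier side and reduce the $\gamma$-norm to a scalar square function. Since $\widehat{e^{i\langle\xi,\cdot\rangle}\phi_R}(\eta)=R^{d-d/q}\widehat\phi(R(\eta-\xi))$, we get
\[
T_{R,\xi}(s,y)=e^{i\langle\xi,y\rangle}R^{-d/q}\,\psi_{R,s}(y/R),
\]
where $\widehat{\psi_{R,s}}(\zeta)=m_s(\xi+\zeta/R)\widehat\phi(\zeta)$ and $m_s(\eta)=|\eta|e^{-s|\eta|^2}-|\xi|e^{-s|\xi|^2}$. Here $\zeta$ ranges over the compact set $K=\supp\widehat\phi$.

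The key pointwise estimate comes from the mean value theorem. For $\zeta\in K$ and $R\ge R_0$, so that $|\xi+\zeta/R|\in[|\xi|/2,2|\xi|]$, we have
\[
|m_s(\xi+\zeta/R)|\lesssim_{K,\xi} R^{-1}\sup_{|\eta|\in[|\xi|/2,2|\xi|]}|\nabla_\eta(|\eta|e^{-s|\eta|^2})|\lesssim R^{-1}(1+s)e^{-s|\xi|^2/4}.
\]
The same bound holds for finitely many $\zeta$-derivatives of $m_s(\xi+\zeta/R)\widehat\phi(\zeta)$, because differentiating in $\zeta$ only produces further factors $R^{-1}$. Consequently $\psi_{R,s}$ is a Schwartz function whose seminorms are $\lesssim R^{-1}(1+s)e^{-cs}$ uniformly in $R\ge R_0$. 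In particular $\|\psi_{R,s}\|_{L^q}\lesssim R^{-1}e^{-c's}$, and the dilation normalisation $R^{-d/q}$ makes $\|T_{R,\xi}(s,\cdot)\|_{L^q(\R^d)}=\|\psi_{R,s}\|_{L^q}$.

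Next I bound the $\gamma$-norm. The function $s\mapsto \1_I(t-s)T_{R,\xi}(s,\cdot)x$ is a scalar function times the fixed vector $x$, so
\[
\|s\mapsto \1_I(t-s)T_{R,\xi}(s,\cdot)x\|_{\gamma(\R_+;L^q(\R^d;X))}\le \|x\|\,\|s\mapsto \1_I(t-s)T_{R,\xi}(s,\cdot)\|_{\gamma(\R_+;L^q(\R^d))}.
\]
For $q<\infty$ the square function characterisation of $\gamma(\R_+;L^q)$ (\cite[Chapter 9]{HNVW2}) applies, and Minkowski's inequality gives
\[
\Big\|\Big(\int\1_I(t-s)|T_{R,\xi}(s,\cdot)|^2\dd s\Big)^{1/2}\Big\|_{L^q}\le\Big(\int\1_I(t-s)\|T_{R,\xi}(s,\cdot)\|_{L^q}^2\dd s\Big)^{1/2}
\]
when $q\ge2$. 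For $q<2$ I instead take the $L^2_s$ norm inside using the pointwise Schwartz decay: $|\psi_{R,s}(z)|\lesssim R^{-1}e^{-cs}(1+|z|)^{-d-1}$, which is integrable in $L^q$ after the $s$-integral. Either way the $\gamma$-norm is at most $CR^{-1}\|x\|\,\big(\int_{t-I}e^{-2cs}\dd s\big)^{1/2}$.

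To conclude, I take the $L^p$ norm in $t$. With $I=(a,b)$ bounded, $\int_{\R_+}\big(\int_{(t-b,t-a)}e^{-2cs}\dd s\big)^{p/2}\dd t\lesssim\int_{\R_+}e^{-pc(t-b)_+}\dd t<\infty$. The whole norm is therefore $O(R^{-1})\to0$.

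The main obstacle is the uniform-in-$s$ Schwartz control on $\psi_{R,s}$. Because of the compact Fourier support, the multiplier $|\eta|e^{-s|\eta|^2}$ is smooth near $\xi\neq0$, so the only work is bookkeeping that the $s$-derivative factors like $s|\eta|^2$ are absorbed by $e^{-s|\xi|^2/4}$.
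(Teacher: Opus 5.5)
Your proposal is correct and follows essentially the same route as the paper: you rescale on the Fourier side around the frequency $\xi$, use the mean value theorem to gain a factor $R^{-1}$ with exponential decay in $s$, get pointwise decay of the inverse Fourier transform from derivative bounds on the compactly supported symbol, and bound the $\gamma$-norm by its square-function description. The differences are cosmetic: the Minkowski shortcut for $q\ge 2$ is not needed, since the pointwise-decay bound (which the paper uses for all $q\in[1,\infty)$) already covers that case, and you make explicit the $L^p(\R_+)$-integrability in $t$ that the paper leaves implicit.
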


\begin{proof}
For $s \in \R_+$ write
\[
  m_s(\zeta):=2\pi|\zeta|e^{-4\pi^2s|\zeta|^2}, \qquad \zeta \in \R^d,
\]
and note that
\[
\begin{aligned}
  T_{R,\xi}(s,y)
  &=
  \mc F^{-1}_\zeta
  \Big(
    \bracb{m_s(\zeta)-m_s(\tfrac{\xi}{2\pi})}
    R^{d(1-1/q)}\widehat\phi(R(\zeta-\tfrac{\xi}{2\pi}))
  \Big)(y)
  \\
  &=
  R^{-d/q}e^{i\xi\cdot y}
  \mc F^{-1}_\eta \brB{\bracb{m_s(\tfrac{\xi}{2\pi}+\tfrac\eta{R})-m_s(\tfrac{\xi}{2\pi})}\widehat\phi(\eta)}(y/R).
\end{aligned}
\]
Thus, defining
\[
  e_{R,s,\xi}(\eta)
  :=
  \bracb{m_s(\tfrac{\xi}{2\pi}+\tfrac\eta{R})-m_s(\tfrac{\xi}{2\pi})}\widehat\phi(\eta), \qquad \eta \in \R^d,
\]
by a change of variables we have, for any bounded interval $I\subseteq \R_+$
\begin{align}\label{eq:Dtoe}
  \nrmB{
    \brB{\int_I |T_{R,\xi}(s,\cdot)|^2\dd s}^{1/2}
  }_{L^q(\R^d)}
  =
  \nrmB{
    \brB{\int_I |\widecheck{e_{R,s,\xi}}|^2\dd s}^{1/2}
  }_{L^q(\R^d)} .
\end{align}
Let $r>0$ be such that $\supp\widehat\phi\subseteq B_r(0)$.  Then, if
$R\geq \tfrac{4\pi r}{|\xi|}$ and $\eta\in B_r(0)$,
\[
\cbraceb{\tfrac{\xi}{2\pi}+\tfrac{\theta\eta}{R}: 0\leq \theta\leq1}
\subseteq
\cbraceb{\zeta \in \R^d:
  \tfrac{|\xi|}{4\pi}\leq |\zeta|\leq \tfrac{3|\xi|}{4\pi}},
\]
and for every
$\beta \in \N^d$,
\begin{equation*}
  |\partial_\zeta^\beta m_s(\zeta)|
  \leq
  C_{\beta,\xi}(1+s)^{|\beta|+1}e^{-c_\xi s},
  \qquad s>0,\,  \tfrac{|\xi|}{4\pi}\leq |\zeta|\leq \tfrac{3|\xi|}{4\pi}.
\end{equation*}
Therefore, by the mean value theorem,
\[
  \sum_{|\beta|\leq d+1}
  \nrm{\partial_\eta^\beta e_{R,s,\xi}(\cdot)}_{L^1(\R^d)}
  \leq
  \frac{C_{d,\xi,\phi}}{R}
  (1+s)^{d+2}e^{-c_\xi s}.
\]
Taking the Fourier inverse, this yields
\[
  |\widecheck{e_{R,s,\xi}}(x)|
  \leq
  \frac{C_{d,\xi,\phi}}{R}
  (1+s)^{d+2}e^{-c_\xi s}(1+|x|)^{-d-1}.
\]
In combination with \eqref{eq:Dtoe}, for any bounded interval $I\subseteq \R_+$ this gives
\begin{equation}\label{eq:step1}
  \nrmB{
    \brB{\int_I |T_{R,\xi}(s,\cdot)|^2\dd s}^{1/2}
  }_{L^q(\R^d)}
  \lesssim_{d,\xi,\phi}
  \frac{1}{R}
  \brB{\int_I k_\xi(s)^2\dd s}^{1/2},
 \end{equation}
where $k_\xi(s)=(1+s)^{d+2}e^{-c_\xi s}$.

Now fix $t\in\R_+$ and set
$
  I_t:
   =
  \cbrace{s\in\R_+: t-s\in I}.
$
Noting that $$(s,y)\mapsto \1_I(t-s)T_{R,\xi}(s,y)x$$ takes its values in the one-dimensional subspace $\spn{x} \subseteq X$ and using \cite[Proposition 9.3.2]{HNVW2} and \eqref{eq:step1} with $I=I_t$, we obtain
\[
\begin{aligned}
  \nrm{
    s\mapsto \1_I(t-s)T_{R,\xi}(s,\cdot)x
  }_{\gamma(\R_+;L^q(\R^d;X))}
  &
  \lesssim_q
  \nrm{x}_X
  \nrmB{
    \brB{\int_{I_t}|T_{R,\xi}(s,\cdot)|^2\dd s}^{1/2}
  }_{L^q(\R^d)}
  \\
  &
  \lesssim_{d,q,\xi,\phi}
  \frac{\nrm{x}_X}{R}
  \brB{\int_0^\infty \1_I(t-s)k_\xi(s)^2\dd s}^{1/2}.
\end{aligned}
\]
The result now follows by letting $R\to\infty$.
\end{proof}

For $q\in (1, \infty)$, the operator $A$ has a bounded $H^\infty$-calculus of angle $0$ if and only if $X$ is a UMD space (see \cite[Theorem 10.2.25, Remark 10.2.26]{HNVW2}). For maximal $L^p$-regularity a similar equivalence was established in \cite[Theorem 17.4.1]{HNVW3}. For $R$-sectoriality, a characterisation in terms of the UMD property is only known for Banach function spaces (see \cite[Theorem 2.4.9]{KLW19}).

The next result gives a complete characterisation in the range $p,q\in(2,\infty)$.

\begin{theorem}[Stochastic maximal regularity of the Laplacian]
\label{thm:large-scale-plane-wave-extraction}
Let $X$ be a UMD Banach space with type $2$, let $p,q\in(2,\infty)$, and let $T\in(0,\infty)$. For the Laplacian $A=-\Delta$ on $L^q(\R^d;X)$, the following assertions are
equivalent:
\begin{enumerate}[label={\rm(\arabic*)}, leftmargin=*]
\item\label{it1:large-scale-plane-wave-extraction}
$A$ has stochastic maximal $L^p$-regularity on $(0,T)$;
\item\label{it2:large-scale-plane-wave-extraction}
$A$ has stochastic maximal $L^p$-regularity on $\R_+$;
\item\label{it3:large-scale-plane-wave-extraction}
$X$ satisfies condition $(S_p)$.
\end{enumerate}
\end{theorem}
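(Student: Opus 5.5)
We prove (1)$\Leftrightarrow$(2), then (3)$\Rightarrow$(2), and finally (2)$\Rightarrow$(3). The last implication is the heart of the matter.

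\emph{(1)$\Leftrightarrow$(2).} Stochastic maximal regularity on $\R_+$ restricts to $(0,T)$. For the converse I would use the scaling invariance of the Laplacian. Under the dilation $y\mapsto \theta^{1/2}y$ combined with $t\mapsto\theta t$, the operators $-\Delta$ and $-\theta\Delta$ are similar via an isometry of $L^q(\R^d;X)$ (up to a scalar factor). Hence the constant on $(0,T')$ equals the constant on $(0,T)$ for every $T'$. Letting $T'\to\infty$ and using monotone convergence, as in Lemma \ref{lemma:timeintervalSp}, gives the estimate on $\R_+$.

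\emph{(3)$\Rightarrow$(2).} By Proposition \ref{prop:HpEp}, $(S_p)$ for $X$ is equivalent to $(S_p^{\exp})$. By Fubini's theorem the $R$-boundedness of the stochastic convolutions transfers to $L^q(\R^d;X)$. I would use the $\gamma$-formulation of Lemma \ref{lem:gammaRbdd} together with $\gamma(\R_+;L^q(\R^d;X))\simeq L^q(\R^d;\gamma(\R_+;X))$, valid since $q<\infty$. Moreover $L^q(\R^d;X)$ is UMD and of type $2$ because $q\ge2$. Since $-\Delta$ has a bounded $H^\infty$-calculus of angle $0$, Theorem \ref{thm:survey} yields (2).

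\emph{(2)$\Rightarrow$(3).} This is the main step; I would use Lemma \ref{lem:large-scale-plane-waves}.

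1. Reduce to the $\gamma$-formulation \eqref{eq:SMRgamma}. The estimate reads
\[
\bigl\|t\mapsto[s\mapsto \1_{s<t}(-\Delta)^{1/2}e^{(t-s)\Delta}G(s)]\bigr\|_{L^p(\R_+;\gamma)}\lesssim \|G\|_{L^p(\R_+;L^q(\R^d;X))}.
\]

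2. Fix a nonzero $\phi$ with compact Fourier support, and pick frequencies $\xi_n$ with $|\xi_n|^2=2^n$; these may be chosen as distinct multiples of a fixed unit vector. Given $G_j=g_jx_j$, with $g_j$ scalar step functions on a bounded interval and $x_j\in X$, apply the estimate to
\[
G=\sum_j\varepsilon_j\, e^{i\langle\xi_{n_j},\cdot\rangle}\phi_R\, G_j .
\]
By Lemma \ref{lem:large-scale-plane-waves}, each summand equals, up to an error tending to $0$ as $R\to\infty$,
\[
\|\xi_{n_j}\|e^{-(t-s)|\xi_{n_j}|^2}\, e^{i\langle\xi_{n_j},\cdot\rangle}\phi_R\, G_j(s)
= k_{2^{n_j}}(t-s)\, e^{i\langle\xi_{n_j},\cdot\rangle}\phi_R\, G_j(s).
\]
For finitely many simple $G_j$ the error terms are finite sums of terms covered by the lemma; a linear combination of step functions in time is handled by summing the lemma over subintervals.

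3. Remove the spatial factor. The modulation $e^{i\langle\xi,\cdot\rangle}$ would destroy a clean factorisation, so instead of the Kahane contraction principle I would argue pointwise in $y$. Take the $\gamma$-norm in $L^q(\R^d;X)$, which is equivalent to $L^q(\R^d;\gamma(\R_+;X))$. For fixed $y$ the unimodular factors $e^{i\langle\xi_{n_j},y\rangle}$ are absorbed into the Steinhaus variables $\varepsilon_j$, whose distribution is rotation invariant. The common factor $|\phi_R(y)|$ then factors out. Integrating the $p$-th moments against $|\phi_R(y)|^q$, with Kahane's inequality used to pass between $p$ and $q$ moments, both sides become $\|\phi\|_{L^q}$ times the Rademacher norms of $\sum_j\varepsilon_j N_{k_{2^{n_j}}}G_j$ and of $\sum_j\varepsilon_jG_j$.

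4. Conclude. This yields $R$-boundedness of $\{N_{k_{2^n}}\}$, with repetitions handled as in Theorem \ref{thm:SMRRad}. It also covers general $G_j$ by density of the sums of the form $\sum g_jx_j$. The stronger form of Proposition \ref{prop:HpEp} with $\lambda_n=2^n$, together with Lemma \ref{lem:gammaRbdd}, then gives $(S_p)$.

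The main obstacle is step 3. Exchanging $L^p$ and $L^q$ norms requires care, since the two sides are mixed $L^p_tL^q_y$ norms. A cleaner route may be to pass first to $p=q$, using the $p$-independence of both properties (Corollary \ref{cor:Sp-independent-p} and the extrapolation results of \cite{LoVer} for stochastic maximal regularity). With $p=q$, Fubini's theorem makes the pointwise-in-$y$ absorption exact. This is where the hypothesis $p,q\in(2,\infty)$ is used.
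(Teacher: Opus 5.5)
Your overall architecture matches the paper's. You use scaling for (1)$\Leftrightarrow$(2), Theorem~\ref{thm:survey} for (3)$\Rightarrow$(2), and plane-wave extraction via Lemma~\ref{lem:large-scale-plane-waves} for (2)$\Rightarrow$(3). The genuine gap, however, sits in the step you treat as routine.

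In (3)$\Rightarrow$(2) you say that Fubini's theorem transfers the $R$-boundedness from $X$ to $L^q(\R^d;X)$. After the $\gamma$-Fubini isomorphism, the required estimate is in $L^p(\Omega_\varepsilon;L^p(\R_+;L^q(\R^d;\gamma(\R_+;X))))$. Applying the hypothesis on $X$ for each fixed $y$ only controls $L^q_y$-norms of $L^p_{\varepsilon,t}$-norms. Because the operators $N_k$ are convolutions in $t$, you cannot freeze $t$. For $p\neq q$, Minkowski's inequality then goes the wrong way on one of the two sides, so no mixed-norm bound follows.

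The missing ingredient is the extrapolation you mention at the end, but it belongs here:
\begin{enumerate}[label={\rm(\alph*)}, leftmargin=*]
\item By Corollary~\ref{cor:Sp-independent-p} (that is, \cite[Theorem 9.1]{LoVer} together with Proposition~\ref{prop:HpEp}), $(S_p)$ for $X$ gives $(S_q)$ for $X$.
\item At the matching exponent $q$, Fubini does give $(S_q)$ for $L^q(\R^d;X)$.
\item A second extrapolation, in the UMD type~$2$ space $L^q(\R^d;X)$, returns $(S_p)$ and hence $(S_p^{\exp})$ there.
\item Only then does Theorem~\ref{thm:survey} apply.
\end{enumerate}
This is where $p,q\in(2,\infty)$ is really used, cf.\ Remark~\ref{rem:large-scale-plane-wave-endpoints}.

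Conversely, the ``main obstacle'' you see in step~3 of (2)$\Rightarrow$(3) is not one, and no reduction to $p=q$ is needed there. Multiplication by $\phi_R e^{i\langle\xi_j,\cdot\rangle}$ is local in time, so you may fix $t$. The main term is then $y\mapsto\phi_R(y)\sum_j\varepsilon_j e^{i\langle\xi_j,y\rangle}\Psi_j^t$, with $\Psi_j^t\in\gamma(\R_+;X)$ independent of $y$. The argument then runs as follows:
\begin{enumerate}[label={\rm(\alph*)}, leftmargin=*]
\item Kahane--Khintchine in $L^q(\R^d;\gamma(\R_+;X))$ switches the $\varepsilon$-moment from $p$ to $q$.
\item Fubini on $\Omega_\varepsilon\times\R^d$ lets the phases be absorbed into the Steinhaus variables.
\item Kahane--Khintchine in $\gamma(\R_+;X)$ switches the moment back to $p$.
\end{enumerate}
Integrating in $t$ last gives the two-sided estimate for all $p,q\in[2,\infty)$. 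This is the paper's argument.

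Your remaining choices are harmless. Restricting to $\lambda_n=2^n$ and using the stronger form of Proposition~\ref{prop:HpEp} works, though you may simply take arbitrary $\lambda_j>0$ with $|\xi_j|^2=\lambda_j$. Repeated frequencies need no special treatment either, since the $\varepsilon_j$ are indexed by $j$ rather than by the frequency.
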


\begin{proof}
Let $C_I(A)$ denote the constant in the equivalent formulation of stochastic maximal $L^p$-regularity in \eqref{eq:SMRgamma}.

\smallskip
\ref{it1:large-scale-plane-wave-extraction} $\Leftrightarrow$ \ref{it2:large-scale-plane-wave-extraction}: \
For $\lambda>0$, define
\[
        (D_\lambda f)(x):=\lambda^{d/q}f(\lambda x),
        \qquad f\in L^q(\R^d;X).
\]
Then $D_\lambda$ is an invertible isometry, with inverse $D_\lambda^{-1} = D_{\lambda^{-1}}$, and we have \(D_{\lambda^{-1}}AD_\lambda=\lambda^2A\). It follows by similarity invariance that
$ C_I(A)=C_I(\lambda^2A).$
On the other hand, from the formulation \eqref{eq:SMRgamma} one can deduce that  $C_I(\lambda^2A)=C_{\lambda^2I}(A)$. Thus,
\begin{align}\label{eq:scalingSMR}
        C_{\lambda^2I}(A)=C_I(A),
        \qquad \lambda>0.
\end{align}

Now assume that \ref{it1:large-scale-plane-wave-extraction} holds. Taking $I=(0,T)$ in \eqref{eq:scalingSMR}, we see that $A$ has stochastic maximal $L^p$-regularity on every interval $(0,T')$, with a constant independent of $T'>0$. Applying these estimates to the restrictions of $G\in L^p(\R_+;L^q(\R^d;X))$ and letting $T'\to\infty$, monotone convergence gives stochastic maximal $L^p$-regularity on $\R_+$. This proves the implication \ref{it1:large-scale-plane-wave-extraction} $\Rightarrow$ \ref{it2:large-scale-plane-wave-extraction}. The converse implication follows immediately by restriction.

\smallskip
\ref{it3:large-scale-plane-wave-extraction} $\Rightarrow$ \ref{it2:large-scale-plane-wave-extraction}: \
Assume that \ref{it3:large-scale-plane-wave-extraction} holds. By Proposition \ref{prop:HpEp}, the space $X$ satisfies $(S_p^{\exp})$. Since $p,q\in(2,\infty)$, the extrapolation result \cite[Theorem 9.1]{LoVer} implies that $X$ also satisfies $(S_q^{\exp})$. By Fubini's theorem, the space $L^q(\R^d;X)$ satisfies $(S_q^{\exp})$, and a second application of the same extrapolation result shows that it satisfies $(S_p^{\exp})$.

The space $L^q(\R^d;X)$ is a UMD space with type $2$, and the Laplacian on this space has a bounded $H^\infty$-calculus of angle $0$. Theorem \ref{thm:survey} therefore implies that $A$ has stochastic maximal $L^p$-regularity on $\R_+$. This proves the implication \ref{it3:large-scale-plane-wave-extraction} $\Rightarrow$ \ref{it2:large-scale-plane-wave-extraction}.

\smallskip
\ref{it2:large-scale-plane-wave-extraction} $\Rightarrow$ \ref{it3:large-scale-plane-wave-extraction}: \  Assume that \ref{it2:large-scale-plane-wave-extraction} holds. We prove first that the family of exponential stochastic convolution operators is $R$-bounded. By density, it suffices to prove the corresponding estimate for functions $f_1,\ldots,f_n$ which are finite linear combinations of functions of the form $\1_Ix$, where $I\subseteq\R_+$ is a bounded interval and $x\in X$. This will be done in three steps.

Fix a non-zero function $\phi\in\mc S(\R^d)$ with compact Fourier support and, for $R>0$, set
\[
        \phi_R(y):=R^{-d/q}\phi(y/R).
\]

{\em Step 1: a modulation estimate.} For $f_1,\ldots,f_n\in L^p(\R_+;X)$ and $\xi_1,\ldots,\xi_n\in\R^d$, the Kahane--Khintchine inequalities, Fubini's theorem, and the Kahane contraction principle give
\begin{equation}\label{eq:modulation-estimate}
\begin{aligned}
&\Big\|
(t,y)\mapsto
\phi_R(y)\sum_{j=1}^n
\varepsilon_je^{i\xi_j\cdot y}f_j(t)
\Big\|_{L^p(\Omega_\varepsilon;
L^p(\R_+;L^q(\R^d;X)))}
\\
 &\qquad\eqsim_{p,q}
 \|\phi\|_{L^q(\R^d)}
 \Big\|
 \sum_{j=1}^n\varepsilon_jf_j
 \Big\|_{L^p(\Omega_\varepsilon;L^p(\R_+;X))}.
 \end{aligned}
 \end{equation}
 Indeed, for fixed $t\in\R_+$, the Kahane--Khintchine inequalities, first in the Banach space $L^q(\R^d;X)$ and then in $X$, give
 \begin{align*}
 & \Big(
\E_\varepsilon
\Big\|
\phi_R(\cdot)
\sum_{j=1}^n
\varepsilon_je^{i\xi_j\cdot(\,\cdot\,)}f_j(t)
\Big\|_{L^q(\R^d;X)}^p
\Big)^{1/p}
\\
&\qquad\eqsim_{p,q}
\Big(
\int_{\R^d}
|\phi_R(y)|^q
\E_\varepsilon
\Big\|
\sum_{j=1}^n
\varepsilon_je^{i\xi_j\cdot y}f_j(t)
\Big\|_X^q
\dd y
\Big)^{1/q}
\\
&\qquad=
\|\phi_R\|_{L^q(\R^d)}
\Big(
\E_\varepsilon
\Big\|
\sum_{j=1}^n\varepsilon_jf_j(t)
\Big\|_X^q
\Big)^{1/q}
\eqsim_{p,q}
\|\phi\|_{L^q(\R^d)}
\Big(
\E_\varepsilon
\Big\|
\sum_{j=1}^n\varepsilon_jf_j(t)
\Big\|_X^p
\Big)^{1/p}.
\end{align*}
Here the middle equality follows since the unimodular numbers do not change the distribution of the complex Rademacher random variables. Integration with respect to
$t$ gives \eqref{eq:modulation-estimate}.

\emph{Step 2: applying stochastic maximal regularity.}
Let $\lambda_1,\ldots,\lambda_n>0$, and choose
$\xi_1,\ldots,\xi_n\in\R^d$ such that
\[
        |\xi_j|^2=\lambda_j,
        \qquad 1\leq j\leq n.
\]
Define
\[
        F_R(t,y)
        :=
        \sum_{j=1}^n
        \varepsilon_je^{i\xi_j\cdot y}\phi_R(y)f_j(t),
        \qquad (t,y)\in\R_+\times\R^d.
\]
Applying \eqref{eq:SMRgamma} to $F_R$, pointwise in $\Omega_\varepsilon$, and taking the $L^p(\Omega_\varepsilon)$-norm, we obtain
\begin{equation}\label{eq:FRbound}
\begin{aligned}
&\Big\|
t\mapsto
\Big[
s\mapsto
\1_{\{0<s<t\}}
(-\Delta)^{1/2}e^{s\Delta}F_R(t-s)
\Big]
\Big\|_{L^p(\Omega_\varepsilon;
L^p(\R_+;\gamma(\R_+;L^q(\R^d;X))))}
\\
&\qquad\lesssim
\|F_R\|_{L^p(\Omega_\varepsilon;
L^p(\R_+;L^q(\R^d;X)))}
\lesssim_{p,q,\phi}
\Big\|
\sum_{j=1}^n\varepsilon_jf_j
\Big\|_{L^p(\Omega_\varepsilon;L^p(\R_+;X))},
\end{aligned}
\end{equation}
where the second estimate follows from \eqref{eq:modulation-estimate}.

\smallskip
\emph{Step 3: passing to the plane-wave limit.}
By the definition of $T_{R,\xi_j}$,
\begin{equation*}
\begin{aligned}
(-\Delta)^{1/2}e^{s\Delta}
\big(e^{i\xi_j\cdot(\,\cdot\,)}\phi_R\big)
&=
|\xi_j|e^{-s|\xi_j|^2}
e^{i\xi_j\cdot(\,\cdot\,)}\phi_R
+
T_{R,\xi_j}(s,\cdot)
\\
&=
\lambda_j^{1/2}e^{-\lambda_js}
e^{i\xi_j\cdot(\,\cdot\,)}\phi_R
+
T_{R,\xi_j}(s,\cdot).
\end{aligned}
\end{equation*}
Since each $f_j$ is a finite linear combination of functions $\1_Ix$ with $I$ bounded, Lemma \ref{lem:large-scale-plane-waves}, applied to each $\xi_j$, shows that the contribution of the terms \(T_{R,\xi_j}(s,\cdot)f_j(t-s)\) to the left-hand side of \eqref{eq:FRbound} converges to zero as $R\to\infty$.

By the $\gamma$-Fubini isomorphism \cite[Theorem 9.4.8]{HNVW2}, we have the isomorphism of Banach spaces
\(\gamma(\R_+;L^q(\R^d;X)) \eqsim L^q(\R^d;\gamma(\R_+;X))\).
 Hence we may apply \eqref{eq:modulation-estimate} with $X$ replaced by $\gamma(\R_+;X)$. Together with
\[
\|\phi_R f\|_{L^q(\R^d;\gamma(\R_+;X))}
=
\|\phi\|_{L^q(\R^d)}
\|f\|_{\gamma(\R_+;X)},
\]
this allows us to pass to the limit $R\to\infty$ in
\eqref{eq:FRbound}.
Accordingly we obtain
\begin{equation}\label{eq:lambdajfjbound}
\begin{aligned}
&\Big\|
t\mapsto
\Big[
s\mapsto
\sum_{j=1}^n
\varepsilon_j\1_{\{0<s<t\}}
\lambda_j^{1/2}e^{-\lambda_js}f_j(t-s)
\Big]
\Big\|_{L^p(\Omega_\varepsilon;
L^p(\R_+;\gamma(\R_+;X)))}
\\
&\qquad\lesssim_{p,q,\phi}
\Big\|
\sum_{j=1}^n\varepsilon_jf_j
\Big\|_{L^p(\Omega_\varepsilon;L^p(\R_+;X))}.
\end{aligned}
\end{equation}

For each $t>0$, the change of variables $s\mapsto t-s$ is an isometry on $L^2(0,t)$, and hence induces an isometry on the corresponding $\gamma$-radonifying spaces. Thus, by the Kahane–Khintchine inequality, \eqref{eq:lambdajfjbound}  is equivalent to the $R$-boundedness estimate for the family \(\{N_{k_\lambda}:\lambda>0\}\). Hence by Lemma \ref{lem:gammaRbdd}, the family \(\{S_{k_\lambda}:\lambda>0\}\) is $R$-bounded. The observation following Definition \ref{def:Rbddconv} therefore implies $(S_p^{\exp})$, and Proposition \ref{prop:HpEp} gives $(S_p)$. This completes the proof of the implication
\ref{it2:large-scale-plane-wave-extraction} $\Rightarrow$ \ref{it3:large-scale-plane-wave-extraction}.
\end{proof}

\begin{remark}[Endpoint cases]
\label{rem:large-scale-plane-wave-endpoints}
The scaling argument and the plane-wave argument remain valid for $p,q\in[2,\infty)$. Thus, if $p=2$ or $q=2$, assertions
\ref{it1:large-scale-plane-wave-extraction} and \ref{it2:large-scale-plane-wave-extraction} are equivalent and imply \ref{it3:large-scale-plane-wave-extraction}.

If $p=q=2$, the converse implication also holds. Indeed, if $X$ satisfies $(S_2)$, then it satisfies $(S_2^{\exp})$ by Proposition \ref{prop:HpEp}. Fubini's theorem shows directly that $L^2(\R^d;X)$ satisfies $(S_2^{\exp})$, and Theorem \ref{thm:survey} gives stochastic maximal $L^2$-regularity of the Laplacian. Hence all three assertions are equivalent when $p=q=2$. If exactly one of $p$ and $q$ equals $2$, the extrapolation argument does not apply at the endpoint, and the proof above does not give the converse implication.
\end{remark}

\section*{AI disclosure statement}

GPT 5.5 Pro, GPT 5.6 Pro, and Codex were used during the preparation of this paper to explore proof strategies, organise intermediate LaTeX drafts, and check elementary estimates. The authors reviewed all mathematical arguments and are responsible for the content of the paper.

\bibliographystyle{plain}
\bibliography{literature}

\def\polhk#1{\setbox0=\hbox{#1}{\ooalign{\hidewidth \lower1.5ex\hbox{`}\hidewidth\crcr\unhbox0}}} \def\cprime{$'$}
\begin{thebibliography}{10}

\bibitem{agresti2023primitive}
A.~Agresti.
\newblock The primitive equations with rough transport noise: Global well-posedness and regularity.
\newblock {\em arXiv preprint arXiv:2310.01193}, 2023.

\bibitem{AV19}
A.~Agresti and M.C. Veraar.
\newblock Stability properties of stochastic maximal {$L^p$}-regularity.
\newblock {\em J. Math. Anal. Appl.}, 482(2):123553, 35, 2020.

\bibitem{AVsurvey}
A.~Agresti and M.C. Veraar.
\newblock Nonlinear {SPDEs} and maximal regularity: an extended survey.
\newblock {\em NoDEA, Nonlinear Differ. Equ. Appl.}, 32(6):150, 2025.
\newblock Id/No 123.

\bibitem{DHMPT}
R.~Danchin, M.~Hieber, P.B. Mucha, and P.~Tolksdorf.
\newblock {\em Free boundary problems via {Da} {Prato}-{Grisvard} theory}, volume 1578 of {\em Mem. Am. Math. Soc.}
\newblock Providence, RI: American Mathematical Society (AMS), 2025.

\bibitem{Fack14}
S.~Fackler.
\newblock The {Kalton}-{Lancien} theorem revisited: maximal regularity does not extrapolate.
\newblock {\em J. Funct. Anal.}, 266(1):121--138, 2014.

\bibitem{Fackl16}
S.~Fackler.
\newblock Maximal regularity: positive counterexamples on {UMD}-{Banach} lattices and exact intervals for the negative solution of the extrapolation problem.
\newblock {\em Proc. Am. Math. Soc.}, 144(5):2015--2028, 2016.

\bibitem{HNVW1}
T.P. Hyt\"onen, J.M.A.M.~van Neerven, M.C. Veraar, and L.~Weis.
\newblock {\em Analysis in {B}anach spaces. {V}ol. {I}. {M}artingales and {L}ittlewood-{P}aley theory}, volume~63 of {\em Ergebnisse der Mathematik und ihrer Grenzgebiete. 3. Folge.}
\newblock Springer, 2016.

\bibitem{HNVW2}
T.P. Hyt\"onen, J.M.A.M.~van Neerven, M.C. Veraar, and L.~Weis.
\newblock {\em Analysis in {B}anach spaces. {V}ol. {II}. {P}robabilistic {M}ethods and {O}perator {T}heory}, volume~67 of {\em Ergebnisse der Mathematik und ihrer Grenzgebiete. 3. Folge.}
\newblock Springer, 2017.

\bibitem{HNVW3}
T.P. Hyt{\"o}nen, J.M.A.M.~van Neerven, M.C. Veraar, and L.~Weis.
\newblock {\em Analysis in Banach Spaces: Volume III: Harmonic Analysis and Spectral Theory}, volume~76.
\newblock Springer Nature, 2023.

\bibitem{KaLa02}
N.~J. Kalton and G.~Lancien.
\newblock {{\(L_p\)}}-maximal regularity on {Banach} spaces with a {Schauder} basis.
\newblock {\em Arch. Math.}, 78(5):397--408, 2002.

\bibitem{KaLa}
N.J. Kalton and G.~Lancien.
\newblock A solution to the problem of {$L\sp p$}-maximal regularity.
\newblock {\em Math. Z.}, 235(3):559--568, 2000.

\bibitem{KLW19}
N.J. Kalton, E.~Lorist, and L.~Weis.
\newblock {\em Euclidean structures and operator theory in {Banach} spaces}, volume 1433 of {\em Mem. Am. Math. Soc.}
\newblock Providence, RI: American Mathematical Society (AMS), 2023.

\bibitem{KWcalc}
N.J. Kalton and L.W. Weis.
\newblock The {$H^\infty$}-calculus and sums of closed operators.
\newblock {\em Math. Ann.}, 321(2):319--345, 2001.

\bibitem{LT79}
J.~Lindenstrauss and L.~Tzafriri.
\newblock {\em Classical {B}anach spaces. {II}}, volume~97 of {\em Ergebnisse der Mathematik und ihrer Grenzgebiete}.
\newblock Springer-Verlag, Berlin-New York, 1979.

\bibitem{LN24}
E.~Lorist and Z.~Nieraeth.
\newblock Banach function spaces done right.
\newblock {\em Indag. Math., New Ser.}, 35(2):247--268, 2024.

\bibitem{LoVer}
E.~Lorist and M.C. Veraar.
\newblock Singular stochastic integral operators.
\newblock {\em Anal. PDE}, 14(5):1443--1507, 2021.

\bibitem{Nee}
J.M.A.M.~van Neerven.
\newblock {\em Functional analysis}, volume 201 of {\em Cambridge Studies in Advanced Mathematics}.
\newblock Cambridge University Press, Cambridge, Corrected printing, 2024.

\bibitem{NVWco}
J.M.A.M.~van Neerven, M.C. Veraar, and L.W. Weis.
\newblock Conditions for stochastic integrability in {UMD} {B}anach spaces.
\newblock In {\em Banach spaces and their applications in analysis (in honor of Nigel Kalton's 60th birthday)}, pages 127--146. De Gruyter Proceedings in Mathematics, De Gruyter, 2007.

\bibitem{NVW-UMD}
J.M.A.M.~van Neerven, M.C. Veraar, and L.W. Weis.
\newblock Stochastic integration in {UMD} {B}anach spaces.
\newblock {\em Ann. Probab.}, 35(4):1438--1478, 2007.

\bibitem{NVW-SMR}
J.M.A.M.~van Neerven, M.C. Veraar, and L.W. Weis.
\newblock Stochastic maximal {$L^p$}-regularity.
\newblock {\em Ann. Probab.}, 40(2):788--812, 2012.

\bibitem{NVW-Rbounded}
J.M.A.M.~van Neerven, M.C. Veraar, and L.W. Weis.
\newblock On the {$R$}-boundedness of stochastic convolution operators.
\newblock {\em Positivity}, 19(2):355--384, 2015.

\bibitem{NVW-survey}
J.M.A.M.~van Neerven, M.C. Veraar, and L.W. Weis.
\newblock Stochastic integration in {B}anach spaces---a survey.
\newblock In {\em Stochastic analysis: a series of lectures}, volume~68 of {\em Progr. Probab.}, pages 297--332. Birkh\"{a}user/Springer, Basel, 2015.

\bibitem{OgSh}
T.~Ogawa and S.~Shimizu.
\newblock Free boundary problems of the incompressible {Navier}-{Stokes} equations with non-flat initial surface in the critical {Besov} space.
\newblock {\em Math. Ann.}, 390(2):3155--3219, 2024.

\bibitem{pruss2016moving}
J.~Pr\"{u}ss and G.~Simonett.
\newblock {\em Moving interfaces and quasilinear parabolic evolution equations}, volume 105 of {\em Monographs in Mathematics}.
\newblock Birkh\"{a}user/Springer, 2016.

\bibitem{Weis-MathAnn}
L.W. Weis.
\newblock Operator-valued {F}ourier multiplier theorems and maximal {$L\sb p$}-regularity.
\newblock {\em Math. Ann.}, 319(4):735--758, 2001.

\end{thebibliography}

\end{document}